\theoremstyle{plain} 
\newtheorem{theorem}{Theorem}%[section]
\newtheorem{proposition}[theorem]{Proposition}
\newtheorem{lemma}[theorem]{Lemma}
\newtheorem{corollary}[theorem]{Corollary}
\newtheorem{remark}[theorem]{Remark}
\newtheorem{example}[theorem]{Example} 
\newcommand{\R}{\mathbb{R}}
\newcommand{\mL}{\mathcal{L}}
\newcommand{\mM}{\mathcal{M}}
\newcommand{\dd}{\, \text{d}}
\newcommand{\Ab}{A_{\mathrm{b}}}
\newcommand{\Ac}{A_{\mathrm{c}}}
\newcommand{\Acr}{A_{\mathrm{c}r}}
\newcommand{\Ahatc}{\widehat{A}_{\mathrm{c}}}
\newcommand{\Atildec}{\widetilde{A}_{\mathrm{c}}}
\newcommand{\Bb}{B_\mathrm{b}}
\newcommand{\BbUpper}{B_{\mathrm{b}1}}
\newcommand{\Bc}{B_{\mathrm{c}}}
\newcommand{\Bcr}{B_{\mathrm{c}r}}
\newcommand{\Bhatc}{\widehat{B}_{\mathrm{c}}}
\newcommand{\Btildec}{\widetilde{B}_{\mathrm{c}}}
\newcommand{\Cb}{C_{\mathrm{b}}}
\newcommand{\Cc}{C_{\mathrm{c}}}
\newcommand{\Ccr}{C_{\mathrm{c}r}}
\newcommand{\Chatc}{\widehat{C}_{\mathrm{c}}}
\newcommand{\Ctildec}{\widetilde{C}_{\mathrm{c}}}
\newcommand{\Etildec}{\widetilde{E}_{\mathrm{c}}}
\newcommand{\Fk}{F_{\mathrm{k}}}
\newcommand{\Fs}{F_{\mathrm{s}}}
\newcommand{\Hc}{\mathcal{H}_\mathrm{c}}
\newcommand{\Jb}{J_\mathrm{b}}
\newcommand{\Jhatc}{\widehat{J}_{\mathrm{c}}}
\newcommand{\Lc}{L_{\mathrm{c}}}
\newcommand{\Lcalc}{\mL_{\mathrm{c}}}
\newcommand{\Mcalo}{\mM_{\mathrm{o}}}
\newcommand{\Lf}{L_{\mathrm{f}}}
\newcommand{\nc}{n_{\mathrm{c}}}
\newcommand{\pAbs}{\tilde{p}}
\newcommand{\Pc}{\mathcal{P}_\mathrm{c}}
\newcommand{\Phatc}{\widehat{\mathcal{P}}_{\mathrm{c}}}
\newcommand{\Phatf}{\widehat{\mathcal{P}}_{\mathrm{f}}}
\newcommand{\Pf}{\mathcal{P}_\mathrm{f}}
\newcommand{\Ptildec}{\widetilde{\mathcal{P}}_{\mathrm{c}}}
\newcommand{\Ptildef}{\widetilde{\mathcal{P}}_{\mathrm{f}}}
\newcommand{\Qb}{Q_\mathrm{b}}
\newcommand{\QbUpperLeft}{Q_{\mathrm{b}11}}
\newcommand{\QbUpperRight}{Q_{\mathrm{b}12}}
\newcommand{\QbLowerLeft}{Q_{\mathrm{b}21}}
\newcommand{\QbLowerRight}{Q_{\mathrm{b}22}}
\newcommand{\Qf}{\mathcal{Q}_\mathrm{f}}
\newcommand{\Qhatc}{\widehat{Q}_{\mathrm{c}}}
\newcommand{\Rb}{R_\mathrm{b}}
\newcommand{\Rc}{R_\mathrm{c}}
\newcommand{\Rf}{\mathcal{R}_\mathrm{f}}
\newcommand{\Rhatc}{\widehat{R}_{\mathrm{c}}}
\newcommand{\tEnd}{t_{\mathrm{end}}}
\newcommand{\uc}{u_{\mathrm{c}}}
\newcommand{\Vb}{V_\mathrm{b}}
\newcommand{\Wb}{W_\mathrm{b}}
\newcommand{\xb}{x_{\mathrm{b}}}
\newcommand{\xc}{x_{\mathrm{c}}}
\newcommand{\xdotc}{\dot{x}_{\mathrm{c}}}
\newcommand{\yc}{y_{\mathrm{c}}}
\newcommand{\bmat}[1]{\begin{bmatrix}#1\end{bmatrix}}
\newcommand{\matlab}{MATLAB\textsuperscript{\textregistered}}
\newlength\fheight
\newlength\fwidth
\begin{document}

\title{Error bounds for port-Hamiltonian model and controller reduction based on system balancing}

\maketitle
 
  \centerline{\scshape Tobias Breiten} 
 \medskip
 {\footnotesize
  \centerline{Institute of Mathematics}
    \centerline{Technische Universit\"at Berlin} 
    \centerline{Stra\ss e des 17. Juni 136, 10623 Berlin, Germany}
    \centerline{tobias.breiten@tu-berlin.de}
 } 
 \medskip

 \centerline{\scshape Riccardo Morandin}
 \medskip
 {\footnotesize
  \centerline{Institute of Mathematics}
    \centerline{Technische Universit\"at Berlin}
    \centerline{Stra\ss e des 17. Juni 136, 10623 Berlin, Germany}
    \centerline{morandin@math.tu-berlin.de}
 } 

 \medskip
  \centerline{\scshape Philipp Schulze}
 \medskip
 {\footnotesize
 
  \centerline{Institute of Mathematics}
    \centerline{Technische Universit\"at Berlin}
    \centerline{Stra\ss e des 17. Juni 136, 10623 Berlin, Germany}
    \centerline{pschulze@math.tu-berlin.de}
 }

\bigskip

\begin{abstract}
We study linear quadratic Gaussian (LQG) control design for linear port-Hamiltonian systems.
To this end, we exploit the freedom in choosing the weighting matrices and propose a specific choice which leads to an LQG controller which is port-Hamiltonian and, thus, in particular stable and passive.
Furthermore, we construct a reduced-order controller via balancing and subsequent truncation.
This approach is closely related to classical LQG balanced truncation and shares a similar a priori error bound with respect to the gap metric.
By exploiting the non-uniqueness of the Hamiltonian, we are able to determine an optimal pH representation of the full-order system in the sense that the error bound is minimized.
In addition, we discuss consequences for pH-preserving balanced truncation model reduction which results in two different classical $\mathcal{H}_\infty$-error bounds.
Finally, we illustrate the theoretical findings by means of two numerical examples.
\end{abstract}

{\em Keywords: port-Hamiltonian systems, model order reduction, LQG control design, error bounds}
\section{Introduction}

Many physical processes can naturally be represented as \emph{passive systems}, i.e., dynamical systems that do not internally produce energy. This system class has been studied in great detail in the seminal works \cite{Wil71,Wil72,Wil72a} where various system theoretic characterizations have been given, among them one based on the well-known \emph{Kalman-Yakubovich-Popov} linear matrix inequality (KYP-LMI). Despite its long history, the interest in passivity-based control technique is still unabated, see, e.g., \cite{vdS96,vdSJ14} for a detailed introduction and an overview of existing results. More recently, a particular focus has been on the port-Hamiltonian (pH) representation of passive systems which not only paves the way for an especially targeted analysis of classical control techniques but also for compositional modelling, see, e.g., \cite{DuiMSB09,vdSM13} or specifically robust port-Hamiltonian representations \cite{BeaMV19,MehMS16}.
Port-Hamiltonian modeling has been first developed to provide a unified framework for systems belonging to different physical domains, by using energy as their `lingua franca'.
Even without taking into consideration multi-physics systems, port-Hamiltonian modeling offers several advantages, including passivity and stability properties as consequence of the underlying structure, structure-preserving interconnection allowing for modularized modeling, and structure-preserving methods for space- and time-discretization \cite{BruPM20,Cel17,KotL19,MehM19,HaMS19}.

 While a port-Hamiltonian representation naturally allows for passivity-based control strategies such as \emph{control by interconnection} \cite{vdS96}, it is well-known that general controllers do not preserve the port-Hamiltonian structure. 
 For example, a classical linear quadratic Gaussian (LQG) controller will generally not preserve stability or passivity such that the weighting and covariance matrices have to be modified accordingly, see \cite{Hal94,LozJ88}. 
 The obvious downside of these specific choices is that the original interpretation of the matrices is lost and these rather serve as additional degrees of freedom to preserve the underlying structure. Based on the detailed structural analysis from \cite{Wu16}, in \cite{WuHLM14,Wuetal18} the authors have proposed different choices of weighting matrices in order to enforce LQG controllers to be port-Hamiltonian. 

 Since passive or port-Hamiltonian systems often are inherently infinite-dimensional \cite{JacZ12},  an efficient implementation of control techniques makes the use of reduced-order surrogate models inevitable. 
 For a general introduction to the field of model order reduction, we refer to, e.g., \cite{Ant05a}. 
 Similar to control approaches, classical model reduction techniques will, in general, not preserve the structure within the reduction process. 
 As a remedy, a variety of structure-preserving model reduction approaches have been suggested in the literature. 
 Since an exhaustive overview of all methods is out of the scope of this article, we only refer to \cite{EggKLMM18,Gugetal12,PolvdS12,WolLEK10} which are most relevant for our presentation. 
 Let us however also point to the recent thesis \cite{Lil20} for a more detailed overview of structure-preserving port-Hamiltonian model reduction. 
 Let us also mention that for passive systems, the method of positive real balanced truncation is known to be structure-preserving and, additionally, allows for an a priori error bound \cite{GuiO13,McFDG90}. 
 On the other hand, classical LQG balancing \cite{Cur03,Mey90} will, similar to the controller itself, not lead to a reduced-order port-Hamiltonian system. 
 While remedies exist \cite{PolvdS12,Wuetal18}, enforcing structure-preservation will typically destroy computable a priori error bounds.
This is also true for structure-preserving modifications of the classical balanced truncation method.
In \cite[Section 3.4.1]{Wu16}, the author has derived an error bound for effort-constraint balanced truncation, but the error bound relies on an auxiliary system and cannot be computed a priori. 
Also structure-preserving balanced truncation for the different but still related class of second-order systems, cf.~\cite{ChaLVV06,ReiS08}, lacks computable a priori error bounds.
 
The paper is organized as follows. In Section 2, we recall the necessary background on port-Hamiltonian systems as well as balancing-based model order reduction. Section 3 studies (reduced-order) LQG controller design for port-Hamiltonian systems. In Section 4, we combine results on standard LQG balanced truncation with a recent observation on error bounds in the context of Lyapunov inequalities. Since in the particular case considered here, the particular pH representation is relevant, we further show how to minimize these error bounds in terms of an extremal solution of the KYP-LMI. We briefly discuss some new consequences for classical balanced truncation of port-Hamiltonian systems. Specifically, we derive an error bound for certain spectral factors of the Popov function and subsequently explain the influence of the particular pH representation, leading to a (theoretical) way to minimize these error bounds. Based on some numerical examples, in Section 5 we illustrate our main theoretical findings. 
Appendix A reviews the recent approach from \cite{Wuetal18} and provides two examples showing that the resulting controller will generally not be port-Hamiltonian.
Furthermore, we compare the reduced-order models and controllers obtained by the new approach with the ones obtained by the approach from \cite{Wuetal18} and observe that the reduced-order models are equivalent, whereas the reduced-order controllers differ.

\section{Preliminaries}
\label{sec:preliminaries}

In this section, we collect some well-known results on port-Hamiltonian systems as well as balancing-based model order reduction. For a more detailed discussion of these topics, we refer to other references such as, e.g., \cite{Ant05a,BeaMV19,GugA04,PolvdS12,Wil71,Wil72,Wil72a}.

\subsection{Port-Hamiltonian systems}\label{subsec:pH}

Let us consider a pH system of the form 
\begin{equation}\label{eq:pH}
\begin{aligned} 
  \dot{x}&= \underbrace{(J-R)Q}_{:=A}x+ Bu,\quad x(0)=0,    \\ 
  y&=\underbrace{B^\top Q}_{:=C} x ,
\end{aligned}
\end{equation} 
where $J,R,Q\in \mathbb R^{n\times n}$, $B\in \mathbb R^{n \times m}$ and $ J^\top = -J$, $R=R^\top \succeq 0 $, $Q=Q^\top\succ 0$, together with a Hamiltonian function $\mathcal{H}(x)=\frac{1}{2}x^\top Qx$. Throughout this article, we assume the system \eqref{eq:pH} to be minimal, i.e., the matrix pairs $(A,B)$ and $(A,C)$ to be controllable and observable, respectively. In particular, this implies that $(A,B)$ and $(A,C)$ are stabilizable and detectable, respectively.
It is well-known that, by taking the Hamiltonian as an energy storage function, for $t_1\ge 0 $ the following \emph{dissipation inequality} holds:
\begin{equation}
    \label{eq:dissipationIneq}
    \mathcal{H}(x(t_1))-\mathcal{H}(x(0 )) = \int_{0 }^{t_1}\big(y(s)^\top u(s)-x(s)^\top QRQx(s) \big)\, \mathrm{d}s \le \int_{0 }^{t_1} y(s)^\top u(s) \, \mathrm{d}s,
\end{equation}
for arbitrary trajectories $x(\cdot)$ of \eqref{eq:pH}. Systems which possess a storage function satisfying such an inequality are called \emph{passive} and cannot produce energy internally and, in particular, are  stable (though not necessarily asymptotically stable).
Furthermore, passive systems are \emph{dissipative} with supply rate $y^\top u$, see \cite{Wil72,Wil72a} for more details on dissipative systems.
Another important property of port-Hamiltonian systems with Hamiltonian function $\frac{1}{2}x^\top Qx$ is that $X=Q$ is a solution to the KYP-LMI:
\begin{align}\label{eq:kyp-lmi}
W(X)=\begin{bmatrix} -A^\top X - XA & C^\top - XB \\ C - B^\top X & 0 \end{bmatrix} \succeq 0, \quad X=X^\top \succeq 0,
\end{align}
since
\begin{align*}
    W(Q) =\begin{bmatrix} -A^\top Q - QA & C^\top - QB \\ C - B^\top Q & 0 \end{bmatrix} 
    = \begin{bmatrix} 2QRQ & 0 \\ 0 & 0 \end{bmatrix} \succeq 0.
\end{align*}
On the other hand, if $X=X^\top\succ 0$ is a solution of the KYP-LMI \eqref{eq:kyp-lmi}, then the system \eqref{eq:pH} can be equivalently written as
\begin{equation}\label{eq:pHX}
\begin{aligned} 
  \dot{x}&= (J_X-R_X)Xx+ Bu,\quad x(0)=0,    \\ 
  y&= B^\top X x ,
\end{aligned}
\end{equation} 
where $J_X=\frac{1}{2}(AX^{-1}-X^{-1}A^\top)=-J_X^\top$ and $R_X=-\frac{1}{2}(AX^{-1}+X^{-1}A^\top)=R_X^\top\succeq 0$.
This system is thus again port-Hamiltonian, but with respect to the Hamiltonian function $\mathcal H_X(x)=\frac{1}{2}x^\top Xx$, that is in general different from $\mathcal H(x)$.
Therefore, the representation of passive systems via a pH structure \eqref{eq:pH} is not unique.
In particular, there exist minimal $X_{\mathrm{min}}$ and maximal $X_{\mathrm{max}}$ solutions s.t.\@ for every solution $X$ to \eqref{eq:kyp-lmi}, it holds that $0\preceq X_{\mathrm{min}}\preceq X \preceq X_{\mathrm{max}}$.
While the system is equivalent, the new Hamiltonian $\mathcal H_X(x)$ may not have the same relevance as one associated with a particular solution of \eqref{eq:kyp-lmi}. For example, in \cite{BeaMV19} the authors investigate a maximally robust (w.r.t.\@ the passivity radius) representation based on what is called the \emph{analytic center}. As will be shown later, for the purpose of model reduction, the extremal solutions $X_{\min}$ and $X_{\max}$ are of special interest.

Another important property of port-Hamiltonian systems is that they maintain their structure when a state space transformation is applied.
In fact, if $T\in\mathbb{R}^{n\times n}$ is an invertible matrix, then by applying the change of variable $\tilde x=Tx$ we obtain the equivalent system
\begin{equation}\label{eq:pHCoV}
\begin{aligned} 
  \dot{\tilde{x}} &= (\tilde{J}-\tilde{R})\tilde{Q}\tilde{x} + \tilde{B}u,\quad \tilde x(0)=0,    \\ 
  y&= \tilde B^\top \tilde Q \tilde x ,
\end{aligned}
\end{equation} 
where $\tilde J=TJT^\top=-\tilde J^\top$, $\tilde R=TRT^\top=\tilde R^\top\succeq 0$, $\tilde Q=T^{-\top}QT^{-1}=\tilde Q^\top\succ 0$, and $\tilde B=TB$.
We will then have $\tilde A=(\tilde J-\tilde R)\tilde Q=TAT^{-1}$.
Note that the Hamiltonian function $\tilde{\mathcal H}(\tilde x)=\frac{1}{2}\tilde x^\top\tilde Q\tilde x=\frac{1}{2}x^\top Qx$ is still the same.
Thus, state space transformations do not destroy the port-Hamiltonian structure and we will make use of this property when discussing balancing methods in the upcoming sections.

Similarly, a Petrov--Galerkin projection $\mathbb P=VW^\top$ of the form $A_p=W^\top AV$, $B_p=W^\top B$, $C_p=CV$ with $QV=WQ_p$ for a matrix $Q_p=Q_p^\top\succ 0$ and $W^\top V=I_p$ leads to a projected port-Hamiltonian system of the form
\begin{equation}\label{eq:pHPG}
\begin{aligned} 
  \dot x_p &= (J_p-R_p)Q_p x_p + B_p u,\quad x_p(0)=0,    \\ 
  y&= B_p^\top Q_p x_p ,
\end{aligned}
\end{equation} 
where $x\approx Vx_p$, $J_p=W^\top JW$, $R_p=W^\top RW$, and $B_p=W^\top B$, with Hamiltonian function $\mathcal H_p(x_p)=\frac{1}{2}x_p^\top Q_px_p$.

Due to the minimality of \eqref{eq:pH}, passivity is equivalent to \emph{positive realness}, i.e., the transfer function $G(s)=C(sI_n-A)^{-1}B$ is analytic in the open right half plane $\mathbb C_+$ and satisfies
\begin{align}\label{eq:popov_fct}
G(s)^* + G(s)\succeq 0 \quad \text{for all } s \in \mathbb C_+.
\end{align}
If all eigenvalues of $A$ are in the open left half plane, then positive realness can be equivalently characterized by the positive semidefiniteness  of the so-called Popov function $\Phi(s)=G(-s)^\top+G(s)$ on the imaginary axis.
Let us consider the following factorization of the Popov function $\Phi(s)$ (see, e.g., \cite{Wil72a}):
\begin{align}\label{eq:popov_aux}
 \Phi(s)= \begin{bmatrix} B^\top (-sI_n-A^\top)^{-1} & I_m \end{bmatrix} W(X) \begin{bmatrix} (sI_n-A)^{-1}B \\ I_m \end{bmatrix}.
\end{align}
In particular, if $X$ is a solution to \eqref{eq:kyp-lmi}, then there exists $L_X\in \mathbb R^{n\times k},k\le n$ such that 
\begin{align}\label{eq:popov_factor}
    \Phi(s)= \begin{bmatrix} B^\top (-sI_n-A^\top)^{-1} & I_m \end{bmatrix} \begin{bmatrix} L_X^\top \\ 0 \end{bmatrix} \begin{bmatrix} L_X & 0 \end{bmatrix} \begin{bmatrix} (sI_n-A)^{-1}B \\ I_m \end{bmatrix}.
\end{align}
We will later on focus on a specific factorization of the form \eqref{eq:popov_factor} that will be associated with
\begin{equation}
    \label{eq:dualKYPLMI}
    \begin{bmatrix} 
        -AY - YA^\top & B-YC^\top  \\
        B^\top - CY & 0 
    \end{bmatrix} 
    \succeq 0,\quad Y=Y^\top \succeq 0,
\end{equation}
i.e., the dual version of \eqref{eq:kyp-lmi}.

\subsection{Balancing-based and effort-constraint model order reduction} \label{subsec:effortConstraint}
 
 The concept of system balancing goes back to \cite{Moo81,MulR76} and relies on the infinite-time  controllability and observability Gramians $\Lcalc,\Mcalo$ associated with a linear (asymptotically) stable system characterized by $(A,B,C)$. These Gramians satisfy the following Lyapunov equations:
 \begin{equation}\label{eq:Lyap_con_obs}
 \begin{aligned}
 A\Lcalc + \Lcalc A^\top + BB^\top &=0, \\
 A^\top\Mcalo + \Mcalo A + C^\top C&=0.
 \end{aligned}
 \end{equation}
 The main idea of balancing-based model order reduction now is to find a specific state space transformation $(A,B,C)\leadsto (TAT^{-1},TB,CT^{-1})$ that allows one to simultaneously measure the amount of controllability and observability in the system. This is possible since in the new coordinates, the Gramians are given by $T\Lcalc T^\top$ and $T^{-\top}\Mcalo T^{-1}$, allowing one to construct an appropriate contragredient transformation such that 
 \begin{align*}
     T\Lcalc T^\top=T^{-\top}\Mcalo T^{-1}= \Sigma= \mathrm{diag}(\sigma_1,\dots,\sigma_n).
 \end{align*}
In fact, the so-called square root balancing method (see, e.g., \cite{Ant05a,GugA04}) yields such a transformation via:
\begin{align*}
    T=\Sigma^{-\frac{1}{2}} Z^\top L_{\Mcalo}, \quad T^{-1}=L_{\Lcalc}^\top U \Sigma^{-\frac{1}{2}},
\end{align*}
where we have the following singular value and Cholesky decompositions
\begin{align*}
L_{\Lcalc} L _{\Mcalo}^\top=U\Sigma Z^\top ,\quad     \Lcalc = L_{\Lcalc}^\top L_{\Lcalc}, \quad \Mcalo = L_{\Mcalo}^\top L_{\Mcalo}.
\end{align*}
For an already balanced model $(A,B,C)$, consider then the partitioning:
\begin{align*}
    A=\begin{bmatrix} A_{11} & A_{12} \\ A_{21} & A_{22} \end{bmatrix}, \quad 
        B=\begin{bmatrix} B_{1} \\ B_{2} \end{bmatrix},\quad  C=\begin{bmatrix} C_1 & C_2 \end{bmatrix}.
\end{align*}
A reduced-order model $(A_r,B_r,C_r)$ is constructed by truncation, i.e., choosing $(A_{11},B_1,C_1)$. In the particular case of \eqref{eq:pH}, a straightforward implementation of this approach will generally not preserve the port-Hamiltonian structure. As a remedy, in \cite{PolvdS12}, the authors have proposed a so-called \emph{effort-constraint} reduction method which is based on a (balanced) partitioning of \eqref{eq:pH} of the form:
\begin{align}\label{eq:part}
       \Jb=\begin{bmatrix} J_{11} & J_{12} \\ J_{21} & J_{22} \end{bmatrix}, \quad 
       \Rb=\begin{bmatrix} R_{11} & R_{12} \\ R_{21} & R_{22} \end{bmatrix}, \quad 
       \Qb=\begin{bmatrix} Q_{11} & Q_{12} \\ Q_{21} & Q_{22} \end{bmatrix}, \quad 
        \Bb=\begin{bmatrix} B_{1} \\ B_{2} \end{bmatrix}.
\end{align}
The reduced-order model is then defined as 
\begin{align}\label{eq:eff_const}
 J_r = J_{11},\quad R_r = R_{11}, \quad Q_r=Q_{11}-Q_{12}Q_{22}^{-1} Q_{21}, \quad B_r = B_1, \quad C_r=B_r^\top Q_r.
\end{align}
Note in particular that with $Q=Q^\top \succ 0$ it also holds that the Schur complement is positive definite, i.e., $Q_r=Q_r^\top \succ 0$.
Furthermore, it can be noted that this reduced-order model can be obtained via Petrov--Galerkin projection $\mathbb{P}=\Vb \Wb^\top$ applied to the balanced system, with $\Wb^\top=\begin{bmatrix}I_r & 0\end{bmatrix}$ and $\Vb^\top=\begin{bmatrix}I_r & -Q_{12}Q_{22}^{-1}\end{bmatrix}$, satisfying $\Wb^\top \Vb=I_r$ and $\Qb \Vb=\Wb Q_r$, or equivalently a Petrov--Galerkin projection $\mathbb{P}=VW^\top$ applied to the original system, with $W^\top=\Wb^\top T$ and $V=T^{-1}\Vb$ (see \autoref{subsec:pH}).
Hence, by construction the reduced-order model is port-Hamiltonian. This structure-preservation however comes with the loss of a (classical) $\mathcal{H}_\infty$-error bound. Let us remark that in \cite{Wuetal18}, the authors have derived an a posteriori error bound which makes use of an auxiliary system. Under certain assumptions on $R$ and $B$, in Section \ref{sec:classic_bt}, we modify the balancing approach such that an a priori error bound, analogous to the classical one, is valid. Furthermore, we establish a connection to balancing of a spectral factorization of the Popov function. Finally, we mention that the method of positive real balanced truncation \cite{HarJS83} aims at the preservation of passivity and relies on balancing the (dual) extremal solutions to \eqref{eq:kyp-lmi}. 

\section{Port-Hamiltonian reduced LQG control design}\label{sec:pH-LQG}

In this section, given a port-Hamiltonian system \eqref{eq:pH}, 
our interest is the design of a controller 
\begin{equation}\label{eq:controller}
 \begin{aligned}
   \xdotc &= \Ac x _c + \Bc \uc , \quad \xc(0)=0, \\
   \yc &= \Cc \xc,
 \end{aligned}
\end{equation}
with $(\Ac,\Bc,\Cc)\in \mathbb R^{\nc\times \nc}\times \mathbb R^{\nc \times m} \times \mathbb R^{m \times \nc}$ such that the dynamics $x(\cdot),\xc(\cdot)$ satisfy a desired behavior, e.g., are associated with the solution of an optimal control problem.
It is well-known, see, e.g., \cite[Section 7]{vdS96}, that if $(\Ac,\Bc,\Cc)$ is a pH system, the resulting closed loop system allows for a pH formulation if a \emph{power-conserving interconnection} of the form $u=-\yc$ and $\uc=y$ is used. Note however that classical LQG control design would result in
\begin{align}\label{eq:LQG_controllers}
\Ac = A-B\tilde{\mathcal R}^{-1}B^\top \Pc-\Pf C^\top \Rf^{-1}C , \quad \Bc=\Pf C^\top \Rf^{-1}, \quad \Cc=\tilde{\mathcal R}^{-1} B^\top \Pc
\end{align}
where $\Pc $ and $\Pf$ are solutions to the following control and filter Riccati equations
\begin{align}
A^\top \Pc +\Pc A - \Pc B\tilde{\mathcal R}^{-1} B^\top \Pc + \tilde{\mathcal Q}&=0 , \label{eq:lqg_care} \\
 A \Pf + \Pf A^\top - \Pf C^\top \Rf ^{-1} C \Pf + \Qf &=0 \label{eq:lqg_fare}.
\end{align}
Since such  controllers  generally do not preserve the pH structure (for a stability violation, we refer to \cite{Joh79}), we suggest a particular choice of the weighting matrices $\tilde{\mathcal R},\Rf,\tilde{\mathcal Q}$ and $\Qf$ and consider the solutions $\Phatc$ and $\Phatf$ of
\begin{align}
A^\top \Phatc +\Phatc A - \Phatc B  B^\top \Phatc + C^\top C&=0 , \label{eq:lqg_care_ph} \\
 A \Phatf + \Phatf A^\top - \Phatf C^\top C \Phatf + BB^\top +2R &=0 \label{eq:lqg_fare_ph}.
\end{align}
The choice $\tilde{\mathcal R}=\Rf=I$ is mainly for simplicity and could be taken into account when a scaling of the input weights would be of further interest. Let us emphasize that our method is inspired by the approach proposed in \cite{Wuetal18} and the above modification of the LQG weights is based on ideas similar to those in \cite{LozJ88,Wu16}. In Appendix \ref{apdxA} we review the method from \cite{Wuetal18} and provide two examples which show that the resulting controller may generally not be realized as a port-Hamiltonian system.

With the particular relation between the covariance and weighting matrices from \eqref{eq:lqg_care_ph} and \eqref{eq:lqg_fare_ph}, the solution $\Phatc$ satisfies the KYP-LMI \eqref{eq:kyp-lmi} such that the controller is port-Hamiltonian.

\begin{theorem}\label{thm:cont_is_pH}
 Let a minimal port-Hamiltonian system \eqref{eq:pH} be given by $(J,R,Q,B).$ If $\Phatc$ and $\Phatf$ are the unique stabilizing solutions of \eqref{eq:lqg_care_ph} and \eqref{eq:lqg_fare_ph}, respectively, then the associated LQG controller defined by 
 \begin{align*}
     \Ahatc=A-BB^\top\Phatc-\Phatf C^\top C , \ \ \Bhatc = \Phatf C^\top, \ \ \Chatc=B^\top \Phatc  
 \end{align*}
 is port-Hamiltonian. 
 In particular, with the Hamiltonian function $\Hc(\xc)=\frac{1}{2}\xc^\top \Phatc \xc,$ a pH realization of the controller is as follows:
 \begin{align*}
 \Jhatc = \tfrac{1}{2}(\Ahatc \Phatc^{-1}-\Phatc^{-1}\Ahatc^\top ),\quad \Rhatc = \tfrac{1}{2}(\Phatc^{-1} Q + I_n)BB^\top (\Phatc^{-1} Q + I_n)^\top,\quad \Qhatc = \Phatc,\quad \Bhatc =B.
 \end{align*}
\end{theorem}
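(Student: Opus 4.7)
The plan is to first use the pH structure of \eqref{eq:pH} to identify an explicit closed form for the filter Riccati solution, and then verify the four defining conditions of a pH realization for the proposed quadruple $(\Jhatc,\Rhatc,\Qhatc,\Bhatc)$. The crucial observation is that $\Phatf=Q^{-1}$ solves \eqref{eq:lqg_fare_ph}. Indeed, using $A=(J-R)Q$ and $C=B^\top Q$ one computes $AQ^{-1}+Q^{-1}A^\top=(J-R)+(-J-R)=-2R$ and $Q^{-1}C^\top CQ^{-1}=BB^\top$, so the residual of \eqref{eq:lqg_fare_ph} equals $-2R-BB^\top+BB^\top+2R=0$. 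The closed-loop matrix $A-Q^{-1}C^\top C=(J-R-BB^\top)Q$ is Hurwitz by a standard pH dissipation/observability argument that uses the minimality of \eqref{eq:pH}, and uniqueness of the stabilizing solution then forces $\Phatf=Q^{-1}$. This immediately yields $\Bhatc=\Phatf C^\top=B$, matching the proposal, together with the simplified form
\[
    \Ahatc = A - BB^\top\Phatc - BB^\top Q = A - BB^\top(\Phatc+Q).
\]

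Next I would verify the pH conditions for $(\Jhatc,\Rhatc,\Qhatc,\Bhatc)$. Skew-symmetry of $\Jhatc$ is immediate from its defining formula. Writing $\Rhatc=\tfrac{1}{2}MBB^\top M^\top$ with $M=\Phatc^{-1}Q+I_n$ makes it manifestly symmetric positive semidefinite. Positivity $\Qhatc=\Phatc\succ 0$ follows because minimality of \eqref{eq:pH} makes $\Phatc$ the unique positive definite stabilizing solution of \eqref{eq:lqg_care_ph}, and the output relation $\Chatc=B^\top\Phatc=\Bhatc^\top\Qhatc$ holds by construction. It remains to prove the identity $(\Jhatc-\Rhatc)\Phatc=\Ahatc$, which, after multiplying by $\Phatc$ on the left and substituting the definitions of $\Jhatc$ and $\Rhatc$, is equivalent to
\[
    -\Phatc\Ahatc - \Ahatc^\top\Phatc = (\Phatc+Q)BB^\top(\Phatc+Q).
\]

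Substituting $\Ahatc=A-BB^\top(\Phatc+Q)$ in the left-hand side produces the cross terms $\Phatc BB^\top(\Phatc+Q)+(\Phatc+Q)BB^\top\Phatc$ together with $-\Phatc A-A^\top\Phatc$. Rewriting the latter via the CARE \eqref{eq:lqg_care_ph} as $QBB^\top Q-\Phatc BB^\top\Phatc$ and collecting all terms regroups them exactly into $(\Phatc+Q)BB^\top(\Phatc+Q)$, closing the argument. The only genuinely nontrivial step is spotting the ansatz $\Phatf=Q^{-1}$; once it is available, everything reduces to short algebraic manipulations driven by the CARE, and the stability/positivity claims needed to pin down the two Riccati solutions are standard consequences of minimality.
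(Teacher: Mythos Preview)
Your proposal is correct and follows essentially the same route as the paper. Both arguments hinge on the identification $\Phatf=Q^{-1}$ (yielding $\Bhatc=B$) and on the algebraic identity $-\Phatc\Ahatc-\Ahatc^\top\Phatc=(Q+\Phatc)BB^\top(Q+\Phatc)$; the paper writes the right-hand side in the equivalent form $(C^\top+\Phatc B)(C^\top+\Phatc B)^\top$ and phrases the conclusion as ``$\Phatc$ satisfies the KYP-LMI for $(\Ahatc,\Bhatc,\Chatc)$'' before invoking the general construction from Section~\ref{subsec:pH}, whereas you verify the pH conditions for the specific quadruple $(\Jhatc,\Rhatc,\Qhatc,\Bhatc)$ directly. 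Your treatment is also slightly more explicit than the paper's about why $Q^{-1}$ is the \emph{stabilizing} filter solution.
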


\begin{proof}
 Note that due to the stabilizability and detectability of the system, the equations \eqref{eq:lqg_care_ph} and \eqref{eq:lqg_fare_ph} have unique stabilizing solutions $\Phatc$ and $\Phatf$, respectively. In fact, it holds that $\Phatf=Q^{-1}$ since
 \begin{align*}
     &AQ^{-1} + Q^{-1} A^\top - Q^{-1} C^\top C Q^{-1} +2R+BB^\top \\
     &= (J-R)QQ^{-1} + Q^{-1}Q^\top (J^\top -R^\top) - Q^{-1}QBB^\top Q Q^{-1}+2R +BB^\top =0.
 \end{align*}
 This, however, justifies the following derivations
 \begin{align*}
     \Ahatc^\top \Phatc + \Phatc \Ahatc &= (A-BB^\top\Phatc-\Phatf C^\top C)^\top \Phatc + \Phatc (A-BB^\top\Phatc-\Phatf C^\top C) \\
     &= A^\top \Phatc + \Phatc A - \Phatc BB^\top \Phatc - \Phatc BB^\top \Phatc - C^\top C \Phatf \Phatc - \Phatc \Phatf C^\top C \\
     &=-C^\top C -\Phatc BB^\top \Phatc - C^\top C Q^{-1}\Phatc - \Phatc Q^{-1} C^\top C \\
     &= - (C^\top +\Phatc B)(C^\top +\Phatc B)^\top \preceq 0.
 \end{align*}
 We further obtain that 
 \begin{align*}
     \Bhatc^\top \Phatc = (\Phatf C^\top )^\top \Phatc = C Q^{-1} \Phatc = B^\top \Phatc = \Chatc.
 \end{align*}
 We conclude that $\Ahatc^\top \Phatc + \Phatc \Ahatc\preceq 0$ as well as $  \Bhatc^\top \Phatc=\Chatc$ such that $\Phatc=\Phatc^\top \succeq 0$ satisfies 
 \begin{align*}
     \begin{bmatrix} -\Ahatc ^\top \Phatc -\Phatc \Ahatc & \Chatc^\top -\Phatc \Bhatc \\ 
      \Chatc-\Bhatc^\top \Phatc & 0 \end{bmatrix} \succeq 0 .
 \end{align*}
 Since $(A,C)$ is observable, it further holds that $\Phatc \succ 0$. Due to the discussion in \autoref{subsec:pH}, we can conclude that $(\Ac,\Bc,\Cc)$ is port-Hamiltonian with respect to the Hamiltonian function $\mathcal H_{\mathrm{c}}(\xc)=\frac{1}{2}\xc^\top\Phatc\xc$.
 In particular, the matrices $\Jhatc$ and $\Rhatc$ can be constructed as outlined in \autoref{subsec:pH}.
\end{proof}

\begin{remark}\label{rem:rel_between_Pf}
Note that since $\Phatf=Q^{-1}$ the controller is of the form:
 \begin{align*}
     \xdotc &= (A-BB^\top \Phatc- BC )\xc +B \uc, \quad \xc(0)=0, \\
     \yc&= B^\top \Phatc \xc.
 \end{align*}
In particular, the system matrix $\Ahatc$ combines the feedback structure  of a classical linear quadratic regulator $\uc=-BB^\top \Phatc \xc$ with that of a simple output feedback $\uc=-BC\xc$.
\end{remark}
 
\begin{remark}
    \label{rem:modifyCARE}
    Theorem~\ref{thm:cont_is_pH} states that the LQG controller based on the control Riccati equation \eqref{eq:lqg_care_ph} and the modified filter Riccati equation \eqref{eq:lqg_fare_ph} is port-Hamiltonian.
    We note that a similar result can be obtained by modifying the control Riccati equation.
    More precisely, by choosing the weighting matrices $\tilde{\mathcal Q} = C^\top C+2QRQ$ and $\Qf = BB^\top$, the resulting Gramian $\Pc$ is simply given by $\Pc=Q$ and the resulting LQG controller is port-Hamiltonian with Hamiltonian $\frac12 \xc^\top \Pf^{-1} \xc$.
    This choice of the weighting matrix $\tilde{\mathcal Q} = C^\top C+2QRQ$ also permits a physical interpretation as it corresponds to an optimal control problem where the LQR cost is given by the sum of the squared input norm, the squared output norm, and (twice) the dissipated energy, cf.~\eqref{eq:dissipationIneq}.
    This cost function leads to an LQR control which is simply given by the output feedback $u = -B^\top\Pc x = -B^\top Qx = -y$.
\end{remark}

\subsection{Structure-preserving LQG balanced truncation for port-Hamiltonian systems}
\label{subsec:modifiedLQG}

With regard to a numerical implementation for large-scale systems, let us further derive a reduced port-Hamiltonian controller which replaces $(\Ahatc,\Bhatc,\Chatc)$ by a surrogate model. This is done in three steps: first, the system is transformed into a balanced form \eqref{eq:part} such that $\Phatf=\Phatc=\Sigma=\mathrm{diag}(\sigma_1,\dots,\sigma_n).$ 
Subsequently, a reduced-order model is obtained by the effort-constraint method:
\begin{equation}\label{eq:rom_new}
 \begin{aligned}
   \dot{\widehat{x}}_{r} &= (\widehat{J}_r-\widehat{R}_r)\widehat{Q}_r \widehat{x}_{r}+\widehat{B}_ru,\quad \widehat{x}_{r}(0)=0, \\
   \widehat{y}_r&= \widehat{C}_r \widehat{x}_{r},
 \end{aligned}
\end{equation}
where $(\widehat{J}_r,\widehat{R}_r,\widehat{Q}_r,\widehat{B}_r,\widehat{C}_r)$ are as in \eqref{eq:eff_const}. Finally, a reduced controller $(\widehat{A}_{\mathrm{c}r},\widehat{B}_{\mathrm{c}r},\widehat{C}_{\mathrm{c}r})$ is constructed according to Theorem \ref{thm:cont_is_pH}. The entire procedure is summarized in Algorithm \ref{alg:new}.

\begin{algorithm}[H]
    \caption{pH-preserving LQG reduced controller design} 
    \label{alg:new}
    \begin{algorithmic}[1]
      \REQUIRE $J, R,Q\in \mathbb R^{n\times n}, B\in \mathbb R^{n\times m}$ as in \eqref{eq:pH} with minimal $(A,B,C)$
      \ENSURE Reduced-order LQG controller $(\widehat{A}_{\mathrm{c}r},\widehat{B}_{\mathrm{c}r},\widehat{C}_{\mathrm{c}r})$ \\
      \STATE Compute $\Phatc$ and $\Phatf=Q^{-1}$ solving \eqref{eq:lqg_care_ph} and \eqref{eq:lqg_fare_ph}.\!\!\!
      \STATE Compute $T\in \mathbb R^{n\times n}$ defined by $T\Phatf T^\top = T^{-\top} \Phatc T^{-1} = \mathrm{diag}(\sigma_1,\dots,\sigma_n)$.
      \STATE Balance the system $\Jb=TJT^\top , \Rb = TRT^\top , \Qb =T^{-\top } Q T^{-1} $ and $\Bb=TB$.
      \STATE Obtain the reduced system $(\widehat{A}_r,\widehat{B}_r,\widehat{C}_r)$ by the effort-constraint method. 
      \STATE Compute the reduced-order LQG controller $(\widehat{A}_{\mathrm{c}r},\widehat{B}_{\mathrm{c}r},\widehat{C}_{\mathrm{c}r})$ based on Theorem \ref{thm:cont_is_pH} for $(\widehat{A}_r,\widehat{B}_r,\widehat{C}_r)$.
      \end{algorithmic}
\end{algorithm}

It turns out that due to the particular choice of the weighting matrix $\Qf = BB^\top +2R$ and the resulting relation $\Phatf=Q^{-1}$, the reduced-order model \eqref{eq:rom_new} can equivalently be obtained by simple truncation.

\begin{theorem}\label{thm:rom_ph}
    Let a minimal port-Hamiltonian system \eqref{eq:pH} be given by $(J,R,Q,B)$ and let $(\widehat{A}_r,\widehat{B}_r,\widehat{C}_r)$ be the corresponding reduced-order model obtained in Algorithm \ref{alg:new}. 
    Then $(\widehat{A}_r,\widehat{B}_r,\widehat{C}_r)$ has a port-Hamiltonian realization. 
    Moreover, $(\widehat{A}_r,\widehat{B}_r,\widehat{C}_r)$ can be obtained by simple truncation of a system that is balanced w.r.t.~$\Phatc$ and $\Phatf$ as in \eqref{eq:lqg_care_ph} and \eqref{eq:lqg_fare_ph}, respectively.
\end{theorem}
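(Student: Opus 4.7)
The proof splits naturally into two claims, and the key algebraic observation behind both is that the filter Gramian chosen in \eqref{eq:lqg_fare_ph} is explicitly $\Phatf=Q^{-1}$, as already established inside the proof of Theorem~\ref{thm:cont_is_pH}.

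For the first claim (pH realization of the reduced system), my plan is to combine two structure-preservation facts from the preliminaries. The balancing step in Algorithm~\ref{alg:new} is just a state-space transformation $\tilde x=Tx$, and by the discussion around \eqref{eq:pHCoV} such a transformation preserves the port-Hamiltonian form with transformed factors $\Jb=TJT^\top$, $\Rb=TRT^\top$, $\Qb=T^{-\top}QT^{-1}$, $\Bb=TB$. The subsequent effort-constraint reduction step then preserves the pH structure by the argument recalled around \eqref{eq:eff_const}, using the Petrov--Galerkin projection with $\Wb^\top=[I_r\ 0]$ and $\Vb^\top=[I_r\ -Q_{12}Q_{22}^{-1}]$. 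This already yields a pH realization of $(\widehat A_r,\widehat B_r,\widehat C_r)$ without any further work.

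For the second claim, the plan is to show that after balancing the matrix $\Qb$ is in fact diagonal, so that the off-diagonal blocks $Q_{12}$ and $Q_{21}$ in the partition \eqref{eq:part} vanish. The argument is short: the balancing transformation $T$ is defined by $T\Phatf T^\top=T^{-\top}\Phatc T^{-1}=\mathrm{diag}(\sigma_1,\dots,\sigma_n)$, and substituting $\Phatf=Q^{-1}$ gives
\begin{equation*}
 \Qb^{-1} \;=\; \bigl(T^{-\top}QT^{-1}\bigr)^{-1} \;=\; TQ^{-1}T^\top \;=\; T\Phatf T^\top \;=\; \Sigma,
\end{equation*}
so $\Qb=\Sigma^{-1}$ is diagonal and therefore $Q_{12}=0$, $Q_{21}=0$ in the block partition.

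With this in hand, both the Schur complement in \eqref{eq:eff_const} and the projector $\Vb$ collapse: $\widehat Q_r=Q_{11}-Q_{12}Q_{22}^{-1}Q_{21}=Q_{11}$ and $\Vb=\Wb=[I_r\ 0]^\top$. In other words, the effort-constraint projection $\mathbb{P}=\Vb\Wb^\top$ reduces to the truncation projector, so $(\widehat A_r,\widehat B_r,\widehat C_r)$ equals what one obtains by simply truncating the balanced port-Hamiltonian realization. The only potential obstacle is making sure the identification $\Phatf=Q^{-1}$ really does survive the switch to $\Qb^{-1}$ (i.e., that $\Phatf$ transforms contragredient to $Q$), but this is immediate from the definition of balancing as a contragredient transformation of the two Gramians, so the proof should consist essentially of the short display above plus the two structure-preservation citations.
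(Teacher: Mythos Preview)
Your proposal is correct and follows essentially the same approach as the paper's own proof: both rely on the identity $\Phatf=Q^{-1}$ to conclude that the balanced $\Qb$ is diagonal (equal to $\Sigma^{-1}$), so that the Schur complement in the effort-constraint step collapses to $Q_{11}$ and the method reduces to simple truncation. The paper states this a bit more tersely by invoking $\Phatf=\Qb^{-1}$ directly in balanced coordinates, whereas you spell out the transformation $TQ^{-1}T^\top=\Sigma$ explicitly, but the substance is identical.
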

\begin{proof}
 The fact that $(\widehat{A}_r,\widehat{B}_r,\widehat{C}_r)$ has a port-Hamiltonian realization immediately follows from its construction by the effort-constraint reduction technique. Note that for the balanced model it holds that $\Phatc=\Phatf=\Sigma$. As mentioned in the proof of Theorem \ref{thm:cont_is_pH}, we have that $\Phatf=Q_{\mathrm{b}}^{-1}$, i.e., $Q_{\mathrm{b}}=\mathrm{diag}(\frac{1}{\sigma_1},\dots,\frac{1}{\sigma_n})$. Hence, we conclude that $Q_{\mathrm{b}}$ is diagonal such that the Schur complement in \eqref{eq:eff_const} reads $Q_r=Q_{11}-Q_{12}Q_{22}^{-1}Q_{21}=Q_{11}$ which shows the second assertion.
\end{proof}

The relation $\Phatf=Q^{-1}$ yields some additional beneficial implications with regard to the computation of the reduced-order matrices $\widehat{J}_r$, $\widehat{R}_r$ and $\widehat{Q}_r$.
According to the general idea of square root balancing, let us assume that we have the decompositions 
\begin{align*}
    T=\Sigma^{-\frac{1}{2}} Z^\top L_{\Phatc}, \quad T^{-1} = L_{\Phatf}^\top U \Sigma^{-\frac{1}{2}},
\end{align*}
where 
\begin{align}\label{eq:aux1}
    L_{\Phatf} L _{\Phatc}^\top = U\Sigma Z^\top , \quad Q^{-1}=\Phatf = L_{\Phatf}^\top L _{\Phatf},\quad 
    \Phatc = L_{\Phatc}^\top L _{\Phatc}.
\end{align}
The balanced system then is given by applying the change of variable $\xb=Tx$ as in \eqref{eq:pHCoV}, leading to $\Ab=TAT^{-1}$, $\Bb=TB$, $\Cb=CT^{-1}$ and the port-Hamiltonian formulation: 
\begin{align*}
    \Jb = TJT^\top, \quad \Rb = TRT^\top, \quad \Qb = T^{-\top}QT^{-1} = \Sigma^{-1}.
\end{align*}
We can therefore construct the reduced pH formulation by the Petrov--Galerkin projection $\mathbb P=VW^\top$ with $V^\top=\begin{bmatrix}I_r & 0\end{bmatrix}T^{-\top}$ and $W^\top=\begin{bmatrix}I_r & 0\end{bmatrix}T$ applied to $(A,B,C)$, i.e., 
\begin{equation}\label{eq:pH_red}
    \begin{alignedat}{3}
    \widehat{A}_r &= W^\top AV, \quad& \widehat{B}_r &= W^\top B, \quad& \widehat{C}_r &= CV, \\
    \widehat{J}_r &= W^\top J W, \quad& \widehat{R}_r &= W^\top R W, \quad& \widehat{Q}_r &= (W^\top Q^{-1} W)^{-1} = \Sigma_1^{-1}.
    \end{alignedat}
\end{equation}
This representation will be of particular interest in section \ref{sec:classic_bt}, where we provide an error bound that involves a reduced factorization of $\widehat{R}_r$, cf.\@ Remark \ref{rem:compare_spec_fac}.

\begin{remark}\label{rem:con_to_LQG}
With the previous considerations, we conclude that the reduced-order model $(\widehat{A}_r,\widehat{B}_r,\widehat{C}_r)$ results (and itself is balanced) from the truncation of a system that is balanced w.r.t.\@ the Gramians $\Phatc$ and $\Phatf$, respectively. 
We point out that the representation corresponds to a rather simple modification of classical LQG balanced truncation (by adding the term $2R$) which will be utilized below to derive a suitably modified error bound in the gap metric.
\end{remark}

\subsection{PH formulation in co-energy variables}\label{sec:generalizedStateSpace}

For many applications, a pH formulation as in \eqref{eq:pH} may lead to limitations and it often turns out to be beneficial to consider a differential-algebraic formulation. 
While a detailed analysis of the differential-algebraic case is out of the scope of this article, below we discuss required modifications of the Gramians defined in \eqref{eq:lqg_care_ph} and \eqref{eq:lqg_fare_ph} in the particular case of a formulation in the \emph{co-energy variables} $z\coloneqq Qx$. 
Starting from the description in \eqref{eq:pH}, this leads to the formulation
\begin{equation}\label{eq:pH_gen}
\begin{aligned}
E\dot{z}&= (J-R)z+Bu, \ \ z(0)=0,\\
y&= B^\top z,
\end{aligned}
\end{equation}
of an implicitly given pH system, where $E=Q^{-1}=E^\top\succ 0$, with the Hamiltonian function $\tilde{\mathcal H}(z)=\frac{1}{2}z^\top Ez$. Note in particular that this formulation has the additional advantage of being linear in the defining matrices $(E,J,R).$ Based on well-known LQG control theory for generalized state space systems (e.g., \cite{Meh91}), it is natural to replace \eqref{eq:lqg_care_ph} by 
\begin{align}\label{eq:lqg_care_ph_gen}
 (J-R)^\top \Ptildec E + E^\top\Ptildec (J-R) - E ^\top\Ptildec BB^\top \Ptildec E + BB^\top  &=0.
\end{align}
With regard to port-Hamiltonian structure of a controller, instead of \eqref{eq:lqg_fare_ph} we consider 
\begin{align}\label{eq:lqg_fare_ph_gen}
 (J-R) \Ptildef E^\top + E \Ptildef (J-R)^\top - E \Ptildef BB^\top \Ptildef E^\top + BB^\top +2R &=0.
\end{align}
On the one hand, after left- and right-multiplication of \eqref{eq:lqg_care_ph_gen} by $Q$, we get again \eqref{eq:lqg_care_ph}, thus $\Ptildec=\Phatc$.
On the other hand, it is clear that $\Ptildef=Q=E^{-1}$ is the solution of \eqref{eq:lqg_fare_ph_gen}.

With a slight modification of the arguments used in the proof of Theorem \ref{thm:cont_is_pH}, we have the following result.  
\begin{corollary}\label{cor:cont_is_pH_gen}
 Let $(E,J,R,B)$ define a minimal implicit port-Hamiltonian system of the form \eqref{eq:pH_gen}. If $\Ptildec$ and $\Ptildef$ are the unique stabilizing solutions of \eqref{eq:lqg_care_ph_gen} and \eqref{eq:lqg_fare_ph_gen}, respectively, then the associated LQG controller defined by 
 \begin{align*}
 \Etildec = \Ptildec^{-1}, \ \ \Atildec=\Ptildef^{-1}E^{-1}(J-R-BB^\top\Ptildec E-E\Ptildef BB^\top)E^{-1}\Ptildec^{-1}, \ \ \Btildec =B, \ \ \Ctildec=B^\top  
 \end{align*}
 is a port-Hamiltonian system in co-energy variable formulation.
\end{corollary}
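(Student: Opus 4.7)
The plan is to reduce the claim to the already proven Theorem~\ref{thm:cont_is_pH} via a change of coordinates between the standard and the co-energy formulations. The paragraph preceding the corollary already identifies the two Gramians: left- and right-multiplication of \eqref{eq:lqg_care_ph_gen} by $Q$ recovers \eqref{eq:lqg_care_ph}, giving $\Ptildec=\Phatc$, and direct substitution into \eqref{eq:lqg_fare_ph_gen} gives $\Ptildef=Q=E^{-1}$ (and so $\Phatf=Q^{-1}=\Ptildef^{-1}=E$). These two identifications will do most of the work.

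Next, I rewrite \eqref{eq:pH_gen} in explicit form $\dot x=(J-R)Qx+Bu$, $y=B^\top Qx$, so that $A=(J-R)E^{-1}$ and $C=B^\top E^{-1}$, and apply Theorem~\ref{thm:cont_is_pH}. The resulting explicit pH controller reads $\dot x_c=(\Jhatc-\Rhatc)\Phatc x_c+Bu_c$, $y_c=B^\top\Phatc x_c$, with $\Jhatc=-\Jhatc^\top$, $\Rhatc=\Rhatc^\top\succeq0$, and $\Phatc\succ0$. Now I pass to the controller's co-energy variable $z_c:=\Phatc x_c$. Differentiating and using $x_c=\Phatc^{-1}z_c$ yields $\dot z_c=\Phatc(\Jhatc-\Rhatc)z_c+\Phatc Bu_c$, $y_c=B^\top z_c$; left-multiplying by $\Phatc^{-1}$ puts this into implicit form
\begin{equation*}
\Phatc^{-1}\dot z_c=(\Jhatc-\Rhatc)z_c+Bu_c,\qquad y_c=B^\top z_c,
\end{equation*}
which is exactly the structure of \eqref{eq:pH_gen}. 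This immediately identifies $\Etildec=\Phatc^{-1}=\Ptildec^{-1}$ (symmetric positive definite), $\Btildec=B$, $\Ctildec=B^\top$, and a pH decomposition $\tilde J_c=\Jhatc$, $\tilde R_c=\Rhatc$ inherited from Theorem~\ref{thm:cont_is_pH}.

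What remains is the purely algebraic check that the combined matrix $\Jhatc-\Rhatc=\Ahatc\Phatc^{-1}$ coincides with the expression $\Atildec$ stated in the corollary. Plugging $A=(J-R)E^{-1}$, $\Phatf C^\top C=E\cdot E^{-1}BB^\top E^{-1}=BB^\top E^{-1}$ into $\Ahatc=A-BB^\top\Phatc-\Phatf C^\top C$ and multiplying by $\Phatc^{-1}$ on the right gives
\begin{equation*}
\Ahatc\Phatc^{-1}=(J-R)E^{-1}\Ptildec^{-1}-BB^\top-BB^\top E^{-1}\Ptildec^{-1},
\end{equation*}
and expanding the prescribed $\Atildec=\Ptildef^{-1}E^{-1}(J-R-BB^\top\Ptildec E-E\Ptildef BB^\top)E^{-1}\Ptildec^{-1}$ while using $\Ptildef^{-1}E^{-1}=I$ and $E\Ptildef=I$ produces the same three-term expression. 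Together with the pH properties already supplied by Theorem~\ref{thm:cont_is_pH}, this completes the proof.

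The main (mild) obstacle is precisely this last bookkeeping step: one must keep track of which side each factor of $E$ or $\Ptildec$ sits on, since $E$ and $J-R$ do not commute, and one must exploit the duality $\Phatf=\Ptildef^{-1}$ at the right moment. No new structural argument is required beyond Theorem~\ref{thm:cont_is_pH} and the change of variable rule from \autoref{subsec:pH}.
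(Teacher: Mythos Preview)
Your proof is correct but follows a different route from the paper's own argument. The paper proceeds by a direct verification: noting that $\Etildec=\Ptildec^{-1}\succ0$, it suffices to show $\Atildec+\Atildec^\top\preceq0$, and this is done by computing the congruence
\[
E\Ptildec(\Atildec+\Atildec^\top)\Ptildec E = -(I_n+E\Ptildec)BB^\top(I_n+E\Ptildec)^\top\preceq 0,
\]
after substituting $\Ptildef=E^{-1}$ and using \eqref{eq:lqg_care_ph_gen}. Your approach instead reduces everything to Theorem~\ref{thm:cont_is_pH} via the co-energy change of variables $z_c=\Phatc x_c$, and then checks algebraically that the resulting controller matrices coincide with those in the statement. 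The paper explicitly acknowledges your route as an alternative in the remark immediately following the corollary, so both arguments are sanctioned; the direct verification is self-contained and slightly shorter, while your reduction makes the conceptual link to Theorem~\ref{thm:cont_is_pH} fully explicit rather than leaving it as a side comment.
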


\begin{proof}
 Since $\widetilde E_c=\Ptildec^{-1}=\widetilde E_c^\top\succ 0$, we only have to prove that $\Atildec+\Atildec^\top\preceq 0$. Since $\Ptildef=E^{-1}$, one easily deduces that
 \begin{align*}
     E\Ptildec(\Atildec+\Atildec^\top)\Ptildec E
     &= E\Ptildec(J-R) + (J-R)^\top\Ptildec E -2E\Ptildec BB^\top\Ptildec E-E\Ptildec BB^\top-BB^\top\Ptildec E = \\
     &= -E\Ptildec BB^\top\Ptildec E-BB^\top-E\Ptildec BB^\top-BB^\top\Ptildec E = \\
     &= -(I_n+E\Ptildec)BB^\top(I_n+E\Ptildec)^\top\preceq0,
 \end{align*}
 therefore $\Atildec+\Atildec^\top\preceq 0$, as requested.
\end{proof}
Alternatively to the proof of Corollary \ref{cor:cont_is_pH_gen}, it can be easily shown that the associated LQG controller corresponds to a co-energy formulation of the LQG controller that we constructed in Theorem~\ref{thm:cont_is_pH}. 
Thus, it is clearly port-Hamiltonian.

\section{An error bound for pH-LQG reduced-order controllers}

As mentioned in Remark \ref{rem:con_to_LQG}, the reduced model from Algorithm \ref{alg:new} can be interpreted as a variant of LQG balanced truncation where the constant term $BB^\top$ is extended by the term $2R.$
It is thus obvious to study possible error bounds w.r.t.\@ the gap metric. Some considerations in this direction have been given in \cite{Wu16}. In contrast to the latter work, here we follow the reasoning in \cite{DamB14} and therefore aim at showing that the reduced Gramians satisfy two Lyapunov \emph{inequalities} which will yield an error bound (for the closed loop dynamics). 
Note that the idea of using Lyapunov inequalities instead of Lyapunov equations to derive an error bound can already be found for linear time-varying systems in \cite{SanR04}.
From now on we assume that $\Sigma$ is the balanced Gramian solving the equations \eqref{eq:lqg_care_ph} and \eqref{eq:lqg_fare_ph} such that the reduced-order model $(\widehat{A}_r,\widehat{B}_r,\widehat{C}_r)$ results from truncation of the associated balanced system $(A,B,C)$.

\subsection{Error bounds for standard LQG balanced truncation}
\label{sec:standardLQGBT}

While the gap metric is standard in the context of LQG balanced truncation, cf.~\cite{Cur03,McFDG90,Mey90,MoeRS11}, for a self-contained presentation, let us recall some well-known concepts regarding system norms and coprime factorizations that can be found in, e.g., \cite{Cur90,McFDG90,SefO93,Vid84,ZhoDG96}. The Hardy spaces $\mathcal{H}_\infty^{p,m}$ and $\mathcal{H}_2^{p,m}$ are defined by
\begin{align*}
    \mathcal{H}_2^{p,m} &:=\left\{ F\colon \mathbb C^+ \to \mathbb C^{p\times m} \ | \ F \text{ is analytic, } \| F\|_{\mathcal{H}_2}:=\left( \sup\limits_{\sigma>0} \int _{-\infty}^{\infty} \| F(\sigma + \imath \omega )\| _{\mathrm{F}}^2 \, \mathrm{d}\omega \right)^{\frac{1}{2}} < \infty \right\}, \\
    \mathcal{H}_{\infty}^{p,m} &:=\left\{ F\colon \mathbb C^+ \to \mathbb C^{p\times m} \ | \ F \text{ is analytic, } \| F\|_{\mathcal{H}_\infty}:=\sup\limits_{z \in \mathbb C^+}  \| F(\sigma + \imath \omega )\| _2 < \infty \right\}.
\end{align*}
Let us further introduce $\mathcal{R}\mathcal{H}_{\infty}^{p,m}$ and $\mathcal{R}\mathcal{H}_{\infty}^{p,m}$ consisting of matrix valued real rational functions that additionally are in $\mathcal{H}_{\infty}^{p,m}$ and $\mathcal{H}_{2}^{p,m}$, respectively.
Based on the stabilizing solution of \eqref{eq:lqg_care_ph} we may construct a normalized right coprime factorization of the form
\begin{align}\label{eq:left_coprime_G}
    G(s)=C(sI_n-A)^{-1}B=N(s)M(s)^{-1},
\end{align}
where $M\in \mathcal{R}\mathcal{H}_\infty^{m,m}$, $N\in \mathcal{R}\mathcal{H}_2^{m,m}$ and the (control) closed loop matrix $A_{\Phatc}$ are given as follows:
\begin{align*}
 N(s)=C(sI_n-A_{\Phatc})^{-1}B, \quad M(s)=I_m-B^\top \Phatc(sI_n-A_{\Phatc})^{-1} B, \quad A_{\Phatc} = A-BB^\top \Phatc.
\end{align*}
Here, right coprimeness means that there exist transfer functions $X,Y\in \mathcal{R} \mathcal{H}_\infty^{m,m}$ such that 
\begin{align*}
 X(s)M(s)+Y(s)N(s)=I.
\end{align*}
The normalization property is understood as the identity
\begin{align*}
 M(-s)^\top M(s)+N(-s)^\top N(s)=I.
\end{align*}
The relevance of such factorizations is that they relate to the graph of the transfer function $G(s)=N(s)M(s)^{-1}:$ 
\begin{align*}
 \mathrm{im} \begin{bmatrix} M \\ N \end{bmatrix} = \left\{ \begin{bmatrix} Mf \\ Nf \end{bmatrix} \ \colon \ f \in \mathcal{H}_2^{m, 1} \right\} \subset \mathcal{H}_2^{2m,1}.
\end{align*}
Moreover, for two transfer functions $G_1,G_2$ with normalized coprime factorizations $\begin{bsmallmatrix}M_1 \\ N_1 \end{bsmallmatrix}$ and $\begin{bsmallmatrix} M_2\\ N_2 \end{bsmallmatrix}$, the gap metric can be defined (see \cite{SefO93}) as follows 
\begin{align*}
 \delta_{\mathrm{gap}}(G_1,G_2) =\max \left\{ \vec{\delta}_{\mathrm{gap}}(G_1,G_2),\vec{\delta}_{\mathrm{gap}}(G_2,G_1) \right\},
\end{align*}
where the directed gap $\vec{\delta}_{\mathrm{gap}}(G_1,G_2)$ is given by 
\begin{align*}
 \vec{\delta}_{\mathrm{gap}}(G_1,G_2)=\inf\limits_{\Pi\in \mathcal{H}_\infty^{m,m}} \left\|\begin{bmatrix} M_1 \\ N_1 \end{bmatrix} - \begin{bmatrix} M_2 \\ N_2 \end{bmatrix}\Pi \right\|_{\mathcal{H}_\infty}.
\end{align*}
For classical LQG balanced truncation (i.e.~when $R=0$ in \eqref{eq:lqg_fare_ph}), we have an error bound of the form 
\begin{align}\label{eq:class_LQG_err_bnd}
 \vec{\delta}_{\mathrm{gap}}(G,\widehat{G}_r)\le  \left\|\begin{bmatrix} M \\ N \end{bmatrix} - \begin{bmatrix} \widehat{M}_r \\ \widehat{N}_r \end{bmatrix} \right\|_{\mathcal{H}_\infty}:=\sup_{z\in \mathbb C_{+}}\left\| \begin{bmatrix} M(z) \\ N(z) \end{bmatrix}- \begin{bmatrix} \widehat{M}_r(z) \\ \widehat{N}_r(z) \end{bmatrix}\right\|_2\le 2\sum_{i=r+1}^n \theta_i,
\end{align}
where $\theta_i=\frac{\sigma_i}{\sqrt{1+\sigma_i^2}}$ and $\sigma_i=\sqrt{\lambda_i(\Phatf\Phatc)}$ are the so-called LQG characteristic values and where $\begin{bsmallmatrix} \widehat{M}_r\\ \widehat{N}_r \end{bsmallmatrix}$ is a normalized right coprime factorization of the transfer function of the reduced-order model. 
Let us further emphasize that this gap metric error bound is particularly useful for analyzing the closed loop behavior of the associated reduced LQG controller. For more details on specific closed loop estimates, we refer to, e.g., \cite{Vid84}.

\subsection{Lyapunov inequalities for structure-preserving LQG balanced truncation}

Let us return to the general case where $R\succeq 0$ in \eqref{eq:lqg_fare_ph} and establish an error bound structurally identical to \eqref{eq:class_LQG_err_bnd}. For this purpose, in what follows we exploit an approach from \cite{DamB14} (similar to the arguments provided in \cite{SanR04}) where specific Lyapunov inequalities have been used to derive classical error bounds. 
Note that $\Phatc$ satisfies
\begin{align}
 A_{\Phatc}^\top \Phatc + \Phatc A_{\Phatc} + C^\top C +\Phatc BB^\top \Phatc =0.
\end{align}
In other words, $\Phatc$ coincides with the observability Gramian of the system
\begin{equation}\label{eq:coprime_real}
\begin{aligned}
    \dot{w} &= A_{\Phatc} w + B u, \quad w(0)=0, \\
    y&= \underbrace{\begin{bmatrix} -B^\top \Phatc \\ C \end{bmatrix}}_{C_{\Phatc}} w+ \begin{bmatrix}  I_m \\ 0 \end{bmatrix} u
\end{aligned}
\end{equation}
which is a realization of $\begin{bsmallmatrix} M\\N \end{bsmallmatrix}$. 
In the standard case, i.e.~using $\Qf=BB^\top$, the controllability Gramian of this system is given by $(I_n+\Phatf\Phatc)^{-1}\Phatf$ (see, e.g., \cite{Cur03}) and, thus, balancing the matrices $\Phatf$ and $\Phatc$ will also transform the controllability and observability Gramians of $\begin{bsmallmatrix}M\\ N \end{bsmallmatrix}$ into diagonal (though not necessarily equal) form.
The following proposition shows that in our setting, i.e.~using $\Qf=BB^\top+2R$, the matrix $(I_n+\Phatf \Phatc)^{-1}\Phatf$ is in general no longer the controllability Gramian of \eqref{eq:coprime_real}, but it still satisfies at least an associated Lyapunov inequality.

\begin{proposition}
    Let $(J,R,Q,B)$ define a minimal port-Hamiltonian system of the form \eqref{eq:pH} and let $\Phatc$ and $\Phatf$ denote the unique stabilizing solutions to the control and filter Riccati equations \eqref{eq:lqg_care_ph} and \eqref{eq:lqg_fare_ph}, respectively. 
    For the system \eqref{eq:coprime_real} and $\mathcal{L}=(I_n+\Phatf \Phatc)^{-1}\Phatf$ it holds that 
  \begin{align}\label{eq:obs_ineq_cp}
  A_{\Phatc} \mathcal{L} + \mathcal{L} A_{\Phatc}^\top+BB^\top \preceq 0.
  \end{align}
\end{proposition}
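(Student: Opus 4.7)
The plan is to reduce the stated matrix inequality to a clean residual identity by sandwiching both sides with the matrix $M := I_n + \Phatf \Phatc$. Since $\Phatc \succ 0$ (by observability) and $\Phatf = Q^{-1} \succ 0$, the product $\Phatf \Phatc$ is similar to the positive semidefinite matrix $\Phatf^{1/2}\Phatc \Phatf^{1/2}$, so all eigenvalues of $M$ are at least $1$ and $M$ is invertible. The two identities that make the sandwich collapse are
\[
M \mathcal{L} = \Phatf \qquad \text{and} \qquad \mathcal{L} M^\top = \Phatf,
\]
both of which follow from $\mathcal{L} = M^{-1}\Phatf$ together with the rearrangement $\Phatf(I_n + \Phatc\Phatf) = (I_n + \Phatf\Phatc)\Phatf$. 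In particular, \eqref{eq:obs_ineq_cp} is equivalent to $M(A_{\Phatc}\mathcal{L} + \mathcal{L} A_{\Phatc}^\top + BB^\top)M^\top \preceq 0$.

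Using the two identities, $M(A_{\Phatc}\mathcal{L} + \mathcal{L} A_{\Phatc}^\top)M^\top$ collapses by associativity to $M A_{\Phatc} \Phatf + \Phatf A_{\Phatc}^\top M^\top$. Next I would expand with $A_{\Phatc} = A - BB^\top\Phatc$ and substitute for the cross terms: the contributions involving $A\Phatf + \Phatf A^\top$ are handled by the filter Riccati equation \eqref{eq:lqg_fare_ph}, and the term $\Phatf\Phatc A\Phatf + \Phatf A^\top\Phatc\Phatf = \Phatf(\Phatc A + A^\top\Phatc)\Phatf$ is handled by the control Riccati equation \eqref{eq:lqg_care_ph}. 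After these substitutions, the $\Phatf C^\top C \Phatf$ contributions cancel, the various $BB^\top$-type summands regroup into exactly $-M BB^\top M^\top$, and the only surviving piece is the dissipation term.

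The identity I expect to emerge is
\[
M\bigl(A_{\Phatc}\mathcal{L} + \mathcal{L} A_{\Phatc}^\top + BB^\top\bigr) M^\top = -2R,
\]
and since $R\succeq 0$ and $M$ is invertible, \eqref{eq:obs_ineq_cp} follows by conjugating with $M^{-1}$. This also makes the conceptual point transparent: in classical LQG balanced truncation ($R=0$) one has \emph{equality}, so that $\mathcal{L}$ is precisely the controllability Gramian of \eqref{eq:coprime_real}; the extra $2R$ in the modified filter Riccati equation \eqref{eq:lqg_fare_ph} is exactly what produces the negative semidefinite slack in \eqref{eq:obs_ineq_cp}. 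The only nontrivial step is the algebraic bookkeeping in the expansion, but there is no deeper obstacle: the choice to multiply by $M = I_n + \Phatf\Phatc$ is natural because it is exactly the factor that relates $\mathcal{L}$ back to $\Phatf$, where the two Riccati equations can be applied directly.
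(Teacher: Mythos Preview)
Your proposal is correct and follows essentially the same route as the paper: both multiply the inequality on the left by $I_n+\Phatf\Phatc$ and on the right by its transpose, use the identities $M\mathcal{L}=\Phatf$ and $\mathcal{L}M^\top=\Phatf$, expand $A_{\Phatc}=A-BB^\top\Phatc$, and then apply the two Riccati equations to arrive at the exact residual $-2R$. The only cosmetic difference is that the paper first records $\mathcal{L}=\mathcal{L}^\top$ before passing to the sandwiched form, whereas you reach the same point via the two $M$-identities directly.
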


\begin{proof}
  \newcommand{\apf}{A_{\Phatf}}
  \newcommand{\tpf}{\Phatf}
  \newcommand{\tpc}{\Phatc}
  \newcommand{\tpci}{\tpc^{-1}}
  Using $\Phatf=\Phatf^\top$ and $\Phatc=\Phatc^\top$, one easily verifies that $\mathcal{L}=\mathcal{L}^\top$. Hence, instead of \eqref{eq:obs_ineq_cp} we may show that 
  \begin{align*}
   A_{\Phatc} \mathcal{L}^\top + \mathcal{L} A_{\Phatc}^\top+BB^\top \preceq 0.
  \end{align*}
  Since $I_n+\Phatf \Phatc$ is invertible, this is however equivalent to showing that 
 \begin{align*}
 (I_n+\Phatf \Phatc) A_{\Phatc} \Phatf 
 +\Phatf A_{\Phatc}^\top  (I_n+\Phatc\Phatf)
 + (I_n+\Phatf \Phatc) BB^\top (I_n+\Phatc \Phatf) \preceq 0.
 \end{align*}
 We now obtain
 \begin{align*}
     & (I_n+\Phatf \Phatc) A_{\Phatc} \Phatf
 +\Phatf A_{\Phatc}^\top  (I_n +\Phatc \Phatf)
 + (I_n+\Phatf\Phatc) BB^\top (I_n + \Phatc \Phatf ) \\
 &= (I_n+\Phatf \Phatc) A \Phatf + \Phatf A^\top (I_n+\Phatc \Phatf) \\ 
&\quad - (I_n+\Phatf \Phatc)BB^\top \Phatc\Phatf - \Phatf \Phatc BB^\top (I_n+\Phatc \Phatf)  \\
 &\quad + BB^\top + \Phatf \Phatc BB^\top+ BB^\top \Phatc \Phatf + \Phatf \Phatc BB^\top \Phatc \Phatf \\
  &= A \Phatf + \Phatf A^\top  + BB^\top  \\
  &\quad - \Phatf \Phatc BB^\top \Phatc \Phatf+\Phatf \Phatc A \Phatf+ \Phatf A^\top \Phatc \Phatf \\
  &= -2R + \Phatf\left( C^\top C + \Phatc A  +A^\top  \Phatc -  \Phatc^\top BB^\top  \Phatc  \right) \Phatf= -2R \preceq 0 . \qedhere
 \end{align*}
\end{proof}

We are ready to state an error bound for a reduced-order model obtained either by Algorithm \ref{alg:new} or (equivalently) by balanced truncation w.r.t.\@ the Gramians $\Pc$ and $\Pf$ from Appendix \ref{apdxA}.

\begin{theorem}\label{thm:err_bnd_lqg}
    Let $(J,R,Q,B)$ define a minimal port-Hamiltonian system of the form \eqref{eq:pH} which is balanced w.r.t.~the stabilizing solutions of \eqref{eq:lqg_care_ph} and \eqref{eq:lqg_fare_ph}.
    Furthermore, let $(\widehat{A}_r,\widehat{B}_r,\widehat{C}_r)$ be a reduced-order model obtained by Algorithm \ref{alg:new}. 
    Denote the balanced Gramians $\Phatf=\Phatc=\Sigma=\mathrm{diag}(\Sigma_1,\Sigma_2)$, where $\Sigma_2=\mathrm{diag}(\sigma_{r+1},\dots,\sigma_n).$ 
    If $\begin{bsmallmatrix} M\\ N \end{bsmallmatrix}$ and $\begin{bsmallmatrix} \widehat{M}_r\\ \widehat{N}_r \end{bsmallmatrix}$ are the associated right coprime factorizations as specified in \eqref{eq:coprime_real}, then 
    \begin{align}\label{eq:err_bnd_lqg}
        \left\| \begin{bmatrix} M \\ N \end{bmatrix}  - \begin{bmatrix} \widehat{M}_r \\ \widehat{N}_r \end{bmatrix}\right\|_{\mathcal{H}_\infty} \le 2  \sum_{i=r+1}^n \frac{\sigma_i}{\sqrt{1+\sigma_i^2}}.
    \end{align}
\end{theorem}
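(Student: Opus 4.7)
The strategy is to recognize the coprime-factor realization \eqref{eq:coprime_real} as a standard stable LTI system to which the Lyapunov-inequality-based balanced truncation error bound of \cite{DamB14} applies, with the roles of controllability and observability Gramians being played by $\mathcal L=(I_n+\Phatf\Phatc)^{-1}\Phatf$ and $\Phatc$ respectively. The essential difference from the classical LQG case (where $R=0$) is that $\mathcal L$ is no longer the exact controllability Gramian of \eqref{eq:coprime_real}, but merely satisfies the Lyapunov inequality $A_{\Phatc}\mathcal L+\mathcal L A_{\Phatc}^\top+BB^\top\preceq 0$ established in the previous proposition. This is precisely the setting in which the inequality-based error bound of \cite{DamB14} yields the same structural bound as the equality case.

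First I would verify, using \eqref{eq:lqg_care_ph}, that $\Phatc$ is exactly the observability Gramian of the coprime-factor realization \eqref{eq:coprime_real}, i.e.\@ that $A_{\Phatc}^\top\Phatc+\Phatc A_{\Phatc}+C_{\Phatc}^\top C_{\Phatc}=0$ with $C_{\Phatc}=\bigl[\begin{smallmatrix}-B^\top\Phatc\\C\end{smallmatrix}\bigr]$; this is a direct expansion using $A_{\Phatc}=A-BB^\top\Phatc$ and the control ARE. Next I would invoke the preceding proposition to get the controllability Lyapunov inequality for $\mathcal L$. In the balanced coordinates hypothesized in the theorem we have $\Phatc=\Phatf=\Sigma=\mathrm{diag}(\Sigma_1,\Sigma_2)$, so $\mathcal L=(I_n+\Sigma^2)^{-1}\Sigma$ is diagonal and conformally partitioned with $\Sigma$. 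In particular, the ``characteristic values'' of the pair $(\Phatc,\mathcal L)$, i.e.\@ the square roots of the eigenvalues of $\Phatc\mathcal L$, are exactly $\theta_i=\sigma_i/\sqrt{1+\sigma_i^2}$, ordered so that those associated with the truncated block $\Sigma_2$ are $\theta_{r+1},\dots,\theta_n$.

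Then I would identify the truncated coprime factor realization with the coprime factors of the reduced model. Partitioning \eqref{eq:coprime_real} conformally with $\Sigma$ and truncating the lower block yields a realization whose state matrix is $(A_{\Phatc})_{11}=A_{11}-B_1B_1^\top\Sigma_1$. On the other hand, by Theorem~\ref{thm:rom_ph} the reduced-order model $(\widehat A_r,\widehat B_r,\widehat C_r)$ equals $(A_{11},B_1,C_1)$, and a direct calculation (using that $\Phatc=\Sigma$ is diagonal, hence its top-left block $\Sigma_1$ solves the reduced control ARE) shows that the normalized right coprime factorization $\bigl[\begin{smallmatrix}\widehat M_r\\ \widehat N_r\end{smallmatrix}\bigr]$ of the reduced-order model as defined in \eqref{eq:coprime_real} has precisely this truncated realization. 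Thus the difference $\bigl[\begin{smallmatrix}M\\N\end{smallmatrix}\bigr]-\bigl[\begin{smallmatrix}\widehat M_r\\ \widehat N_r\end{smallmatrix}\bigr]$ is the balanced-truncation error of the stable system \eqref{eq:coprime_real}.

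Finally, I would apply the main result of \cite{DamB14}: for a stable system whose observability Gramian satisfies the Lyapunov equation and whose ``controllability proxy'' satisfies only the corresponding Lyapunov inequality, provided both matrices are simultaneously diagonal and conformally partitioned, the $\mathcal H_\infty$-error of the truncated realization is bounded by twice the sum of the truncated characteristic values. Plugging in $\theta_i=\sigma_i/\sqrt{1+\sigma_i^2}$ yields \eqref{eq:err_bnd_lqg}. The main obstacle I anticipate is the bookkeeping in the second step, namely showing that truncation of the coprime-factor realization and coprime factorization of the truncated pH model commute; the rest is either a direct algebraic verification or a citation of the inequality-based balancing bound from \cite{DamB14}.
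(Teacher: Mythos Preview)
Your proposal is correct and follows essentially the same approach as the paper: verify the observability Lyapunov equation for $\Phatc$, invoke the controllability Lyapunov inequality for $\mathcal L$ from the preceding proposition, check that truncating the coprime-factor realization coincides with the coprime factorization of the truncated model, and then apply the inequality-based balanced truncation bound of \cite{DamB14} with characteristic values $\theta_i=\sigma_i/\sqrt{1+\sigma_i^2}$. The only nuance is that the paper does not simply cite \cite{DamB14} but reproduces the full time-domain argument, because here the two diagonal matrices $\Phatc=\Sigma$ and $\mathcal L=(I_n+\Sigma^2)^{-1}\Sigma$ do \emph{not} coincide (unlike the standard setting in \cite{DamB14}); the paper explicitly flags this as requiring ``minor modifications'' and works through them, so when you write up the final step you should either check that the version of the result in \cite{DamB14} you invoke genuinely allows two distinct simultaneously diagonal matrices, or redo the one-step truncation estimate yourself.
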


\begin{proof}
 The proof uses almost the exact same arguments as given in \cite{DamB14}. 
 In contrast to the latter reference, here the matrices $\Phatc$ and $\mathcal{L}$ satisfying the Lyapunov (in)equalities do not coincide.
 While this only requires minor modifications in the reasoning from \cite{DamB14}, for self-consistency we provide a full proof of the assertion.
 
 First note that since $(A,B,C)$ is balanced, from the above considerations, it follows that 
 \begin{align*}
     \mathcal{L}=(I_n+\Phatf \Phatc)^{-1}\Phatf =  (I_n + \Sigma^2)^{-1}\Sigma = \mathrm{diag}\left(\frac{\sigma_i}{1+\sigma_i^2}\right), \ i=1,\dots,n,
 \end{align*}
 as well as 
 \begin{align}\label{eq:orig_lqg_bal} 
A_{\Phatc}^\top \Sigma + \Sigma A_{\Phatc} + C_{\Phatc}^\top C_{\Phatc} = 0,\quad  
 A_{\Phatc}  \mathcal{L} + \mathcal{L} A_{\Phatc}^\top +BB^\top  \preceq 0.
\end{align}
Similarly, for the reduced-order surrogate of \eqref{eq:coprime_real} which is defined by 
\begin{align*}
\widehat{A}_{\Sigma}&=\widehat{A}_r- \widehat{B}_r\widehat{B}_r^\top\Sigma_1, \quad \widehat{B}_{\Sigma}=\widehat{B}_r, \quad \widehat{C}_{\Sigma}= \begin{bmatrix} -\widehat{B}_r^\top \Sigma_1 \\ \widehat{C}_r \end{bmatrix}
\end{align*}
 we conclude that
\begin{equation}\label{eq:red_lqg_bal}
\begin{aligned}
\widehat{A}_{\Sigma}^\top \Sigma_1 +\Sigma_1 \widehat{A}_{\Sigma}+\widehat{C}_{\Sigma}^\top \widehat{C}_{\Sigma}  = 0,\quad 
 \widehat{A}_{\Sigma} \mathcal{L}_1 + \mathcal{L}_1 \widehat{A}_{\Sigma}^\top + \widehat{B}_{\Sigma} \widehat{B}_{\Sigma}^\top \preceq 0 .
 \end{aligned}
\end{equation}
where $\mathcal{L}_1=\mathrm{diag}(\frac{\sigma_i}{1+\sigma_i^2}), i=1,\dots,r.$
 
 Since the $\mathcal{H}_\infty$-norm is the $L_2$-$L_2$-induced norm between inputs and outputs (e.g., \cite[Section 5]{Ant05a}), we thus focus on the two systems 
 
 \begin{align*}
 \dot{w}&=A_{\Phatc} w + B u, \quad w(0)=0, \quad  y= C_{\Phatc}w+\begin{bmatrix}I_m \\ 0 \end{bmatrix} u, \\
  \dot{\widehat{w}}_r&=\widehat{A}_{\Sigma} \widehat{w}_r + \widehat{B}_{\Sigma} u, \quad \widehat{w}_r(0)=0,\quad   \widehat{y}_r= \widehat{C}_{\Sigma}\widehat{w}_r+\begin{bmatrix}I_m \\ 0 \end{bmatrix}u.
\end{align*}
Since by assumption $(A,B,C)$ is balanced, i.e., $\Phatf=\Sigma=\Phatc$ we also obtain
\begin{align*}
A_{\Phatc}&=A-BB^\top\Sigma =\begin{bmatrix} A_{11} & A_{12} \\ A_{21} & A_{22} \end{bmatrix}:= \begin{bmatrix} \widehat{A}_r & * \\ * & * \end{bmatrix} - \begin{bmatrix} \widehat{B}_r \\ B_2 \end{bmatrix} \begin{bmatrix} \widehat{B}_r \\ B_2 \end{bmatrix}^\top \begin{bmatrix} \Sigma_1 & 0 \\ 0 & \Sigma_2 \end{bmatrix}  = \begin{bmatrix} \widehat{A}_r-\widehat{B}_r \widehat{B}_r^\top \Sigma_1 & * \\ * & * \end{bmatrix} \\
%%%
C_{\Phatc} &:= \begin{bmatrix} C_1 & C_2\end{bmatrix}= \begin{bmatrix} -\widehat{B}_r^\top \Sigma_1  & * \\ \widehat{C}_r & *  \end{bmatrix}.
\end{align*}
In other words, the reduced right coprime factorization $(\widehat{A}_{\Sigma},\widehat{B}_{\Sigma},\widehat{C}_{\Sigma})$ is obtained by truncating the original right coprime factorization $(A_{\Phatc},B,C_{\Phatc})$. 
With the state vector $w=\begin{bsmallmatrix} w_1 \\ w_2 \end{bsmallmatrix}$ being partitioned accordingly, we obtain 
\begin{align*}
 \frac{\dd}{\dd t} \langle w_1 - \widehat{w}_r,\Sigma_1(w_1 -\widehat{w}_r)\rangle &= 2 \langle w_1-\widehat{w}_r,\Sigma_1 [A_{11},A_{12}] \begin{bmatrix} w_1-\widehat{w}_r \\ w_2 \end{bmatrix} \rangle \\
 \frac{\dd}{\dd t} \langle w_2 ,\Sigma_2 w_2\rangle &= 2\langle w_2,\Sigma_2 (A_{21}w_1 + A_{22}w_2 + B_2 u)\rangle.
\end{align*}
Note that it holds that $y-\widehat{y}_r=C_{\Phatc} \begin{bsmallmatrix} w_1-\widehat{w}_r \\ w_2 \end{bsmallmatrix}$ which we use to show
\begin{align*}
 -\| y-\widehat{y}_r\|^2&=-\begin{bmatrix} w_1-\widehat{w}_r \\ w_2 \end{bmatrix}^\top C_{\Phatc}^\top C_{\Phatc} \begin{bmatrix} w_1-\widehat{w}_r \\ w_2 \end{bmatrix} = \begin{bmatrix} w_1-\widehat{w}_r \\ w_2 \end{bmatrix}^\top (A_{\Phatc}^\top \Sigma +\Sigma A_{\Phatc} ) \begin{bmatrix} w_1-\widehat{w}_r \\ w_2 \end{bmatrix} \\[1ex]
 &=2 (w_1-\widehat{w}_r)^\top  \Sigma_1 \begin{bmatrix}A_{11} & A_{12} \end{bmatrix} \begin{bmatrix} w_1-\widehat{w}_r \\ w_2 \end{bmatrix}
 +2 w_2 ^\top \Sigma_2 \begin{bmatrix} A_{21} & A_{22} \end{bmatrix} \begin{bmatrix} w_1 - \widehat{w}_r \\ w_2 \end{bmatrix}.
\end{align*}
Combining all of the previous results leads to
\begin{align*}
 -\| y-\widehat{y}_r\|^2 \frac{\dd}{\dd t} \langle w_1 - \widehat{w}_r,\Sigma_1(w_1 -\widehat{w}_r)\rangle + \frac{\dd}{\dd t} \langle w_2,\Sigma_2 w_2\rangle-2w_2^\top \Sigma_2 (A_{21}\widehat{w}_r+B_2u)
\end{align*}
and finally
\begin{align}\label{eq:aux2}
 \int_0^T \|y(t)-\widehat{y}_r(t)\|^2 \dd t \le 2\int_0^T w_2(t)^\top \Sigma_2 (A_{21}\widehat{w}_r(t)+B_2u(t))\dd t.
\end{align}
Still following \cite{DamB14}, we want to show that 
\begin{align}\label{eq:aux3}
 4\int_0^T \| u(t)\| ^2 \dd t \ge 2 \int_0^T w_2(t)^\top \mathcal{L}_2^{-1}(A_{21}\widehat{w}_r(t)+B_2 u(t))\dd t),
\end{align}
where $\mathcal{L}_2=\mathrm{diag}(\frac{\sigma_i}{1+\sigma_i^2}),i=r+1,\dots,n.$
From \eqref{eq:orig_lqg_bal} it follows that 
\begin{align*}
    \mathcal{L}^{-1} A_{\Phatc}+ A_{\Phatc}^\top \mathcal{L}^{-1} + \mathcal{L}^{-1} BB^\top \mathcal{L}^{-1} \preceq 0
\end{align*}
which due to Schur complement properties also implies that
\begin{align*}
  \begin{bmatrix}  \mathcal{L}^{-1} A_{\Phatc}+ A_{\Phatc}^\top \mathcal{L}^{-1} & \mathcal{L}^{-1}B \\ B^\top \mathcal{L}^{-1} & -I_m \end{bmatrix} \preceq 0.
\end{align*}
This can be rewritten as follows
\begin{align*}
 \begin{bmatrix} 0 & 0 \\ 0 & I_m \end{bmatrix}\succeq \begin{bmatrix}A_{\Phatc} & B \\ I_n & 0 \end{bmatrix}^\top \begin{bmatrix} 0 & \mathcal{L}^{-1} \\ \mathcal{L}^{-1} & 0 \end{bmatrix} \begin{bmatrix} A_{\Phatc} & B \\ I_n & 0 \end{bmatrix}.
\end{align*}
Let us multiply the previous inequality with $\begin{bmatrix} (w_1 + \widehat{w}_r)^\top & w_2^\top & 2 u^\top\end{bmatrix}$ and $\begin{bsmallmatrix} w_1 + \widehat{w}_r \\ w_2 \\ 2 u \end{bsmallmatrix}$ to obtain 
\begin{align*}
 4\| u \|^2 &\ge 2(w_1+\widehat{w}_r)^\top\mathcal{L}_1^{-1}(\begin{bmatrix} A_{11} & A_{12} \end{bmatrix} \begin{bmatrix} w_1 + \widehat{w}_r\\ w_2 \end{bmatrix}+2B_1u) \\
 &\qquad + 2w_2^\top \mathcal{L}_2^{-1}(\begin{bmatrix} A_{21} & A_{22} \end{bmatrix} \begin{bmatrix} w_1 + \widehat{w}_r \\ w_2 \end{bmatrix}+2B_2 u ).
 \end{align*}
As before, we obtain 
\begin{align*}
 \frac{\dd}{\dd t} (w_1+\widehat{w}_r)^\top \mathcal{L}_1^{-1} (w_1 + \widehat{w}_r) &= 2 (w_1+\widehat{w}_r)^\top \mathcal{L}_1^{-1} (\begin{bmatrix} A_{11} & A_{12} \end{bmatrix} \begin{bmatrix} w_1 + \widehat{w}_r \\ w_2 \end{bmatrix} + 2B_1 u ) \\
 \frac{\dd}{\dd t} (w_2^\top \mathcal{L}_2^{-1} w_2) &= 2 w_2^\top \mathcal{L}_2^{-1}(A_{21}w_1 + A_{22}w_2 + B_2 u).
\end{align*}
Combining the last three (in)equalities leads to 
\begin{align*}
 4\|u\|^2 \ge \frac{\dd}{\dd t} ((w_1+\widehat{w}_r)^\top \mathcal{L}_1^{-1} (w_1 +\widehat{w}_r))+\frac{\dd}{\dd t}(w_2^\top \mathcal{L}_2^{-1}w_2)+2 w_2^\top \mathcal{L}_2^{-1}(A_{21}\widehat{w}_r+B_2 u).
\end{align*}
Integration and using $w(0)=0$, $\widehat{w}_r(0)=0$ yields \eqref{eq:aux3}.

If $\Sigma_2=\sigma_{\nu}$, then $\mathcal{L}_2=\frac{\sigma_{\nu}}{1+\sigma_{\nu}^2}$. Multiplication of \eqref{eq:aux3} with $\frac{\sigma_{\nu}^2}{1+\sigma_{\nu}^2}$ implies 
\begin{align*}
 \frac{4\sigma_{\nu}^2}{1+\sigma_{\nu}^2}  \int_0^T \|u(t)\|^2 \dd t &\ge 2 \int_0^T w_2(t)^\top \frac{\sigma_{\nu}^2}{1+\sigma_{\nu}^2}\mathcal{L}_2^{-1}(A_{21}\widehat{w}_r(t)+B_2u(t))\dd t) \\
 &= 2\int_0^T w_2(t)^\top \Sigma_2 (A_{21}\widehat{w}_r(t)+B_2u(t))\dd t) \ge \int_0^T \| y(t)-\widehat{y}_r(t)\|^2\dd t.
\end{align*}
We can now proceed recursively and truncate step by step one after the other state such that finally we obtain 
\begin{align*}
 \|y-\widehat{y}_r\|_{L^2([0,T];\mathbb R^m)} \le 2 (\theta_{r+1}+\cdots+\theta_{n})\| u\|_{L^2([0,T];\mathbb R^m)},
\end{align*}
where $\theta_i=\frac{\sigma_i}{\sqrt{1+\sigma_i^2}}$, for $i=r+1,\dots,n$. 
As the previous bound is independent of the time $T$, we can consider its limit $T\to \infty$ which shows the assertion.
\end{proof}

\begin{remark}
Note that the final arguments remain valid if $\Sigma_2=\sigma_\nu I$, i.e., in the case where some of the characteristic values appear multiple times. In this sense, the original formulation from \cite{DamB14} is beneficial since the summation in the error bound only includes distinct characteristic values. For $R=0$, the two Gramians coincide with the ones used in classical LQG balanced truncation. While due to the special pH structure  this would result in a purely Hamiltonian system, the right coprime framework could still be followed as long as $(J,B,C)$ is minimal.
\end{remark}

\begin{remark}
    The error bound in Theorem~\ref{thm:err_bnd_lqg} scales with the $\sigma_i$ values which are the square roots of the eigenvalues of $\Phatf\Phatc$.
    In particular, we observe that these values are invariant under similarity transformations, which follows from the fact that any similarity transformation based on an invertible matrix $T$ leads to the transformed Gramians
    $\overline{\mathcal{P}}_{\mathrm{f}} = T\Phatf T^\top$ and $\overline{\mathcal{P}}_{\mathrm{c}} = T^{-\top}\Phatc T^{-1}$ which satisfy
    \begin{equation*}
        \overline{\mathcal{P}}_{\mathrm{f}} \overline{\mathcal{P}}_{\mathrm{c}} = T\Phatf \Phatc T^{-1}.
    \end{equation*}
    Thus, $\overline{\mathcal{P}}_{\mathrm{f}} \overline{\mathcal{P}}_{\mathrm{c}}$ and $\Phatf \Phatc$ have the same eigenvalues and, hence, the $\sigma_i$ values are invariant under similarity transformations.
    For the special case $R=0$, these coincide with the classical LQG characteristic values, cf.~subsection~\ref{sec:standardLQGBT}.
\end{remark}

\subsection[Minimizing the error bound - choosing the right Q]{Minimizing the error bound - choosing the right $Q$}\label{subsec:choosing_Q}

While the error bound in Theorem \ref{thm:err_bnd_lqg} is structurally the same as in classical LQG balanced truncation, the characteristic values are derived from a filter Riccati equation including the additional term $2R.$ Since we have seen that this specific choice of the covariance matrix implies $\Phatf=Q^{-1}$, the question arises whether there exists a particularly \emph{good} pH representation in the sense that the error bound provided in Theorem \ref{thm:err_bnd_lqg} is minimized. With this in mind, first note that the error bound is determined by the eigenvalues of the matrix 
\begin{align*}
 \mathcal{L} \Phatc = (I_n+\Phatf \Phatc)^{-1}\Phatf \Phatc= (I_n+Q^{-1}\Phatc)^{-1}Q^{-1} \Phatc= (Q+\Phatc)^{-1} \Phatc.
\end{align*}
Since $\Phatc=\Phatc^\top \succ 0$, we may consider a Cholesky decomposition $\Phatc=L_{\Phatc}^\top L _{\Phatc} $ such that:
\begin{align}\label{eq:lgq_char_val}
\theta_i=\sqrt{
\lambda_i( \mathcal{L}\Phatc)}=\sqrt{\lambda_i(L_{\Phatc} \mathcal{L} \Phatc  L_{\Phatc}^{-1} )}=\sqrt{\lambda_i(L_{\Phatc} (Q+\Phatc)^{-1}L_{\Phatc}^\top)}.
\end{align}
Following \cite{BeaMV19}, let us assume that $X=X^\top \succ 0$ is a solution to  \eqref{eq:kyp-lmi} and consider the associated alternative pH representation \eqref{eq:pHX}.
Since \eqref{eq:lqg_care_ph} only depends on $(A,B,C)$ its solution is independent of $R_X$ and still given by $\Phatc$. On the other hand, \eqref{eq:lqg_fare_ph} explicitly depends on $R_X,$ which for $X\neq Q$ will generally be affected by the previous transformation. In particular, for $\theta_i$, we have achieved the following transformation
\begin{align*}
    \theta_i = \sqrt{\lambda_i(L_{\Phatc} (Q+\Phatc)^{-1}L_{\Phatc}^\top)} \quad \to \quad \sqrt{\lambda_i(L_{\Phatc} (X+\Phatc)^{-1}L_{\Phatc}^\top))} = \widehat{\theta}_i.
\end{align*}
Assume now that $X_{\mathrm{max}}=X_{\mathrm{max}}^\top \succ 0$ is the \emph{maximal} solution to \eqref{eq:kyp-lmi}. Hence, it holds that $X_{\mathrm{max}}\succeq Q$ as well as
\begin{align*}
    X_{\mathrm{max}}+\Phatc\succeq Q+\Phatc\succ 0
\end{align*}
also implying that
\begin{align*}
0\prec ( X_{\mathrm{max}}+\Phatc)^{-1} \preceq (Q+\Phatc)^{-1}.
\end{align*}
Using the Courant-Fischer-Weyl min-max principle (\cite[Theorem 8.1.2]{GolV13}) allows us to conclude that
\begin{align*}
    \theta_i^2 &= \lambda_i(L_{\Phatc} (Q+\Phatc)^{-1}L_{\Phatc}^\top))\\ 
    &=\min_{\substack{\mathcal{X}_i\subset \mathbb R^n\\ \mathrm{dim}(\mathcal{X}_i)=i}}\ \max_{\substack{z\in \mathcal{X}_i\\ \|z\|=1}} z^\top (L_{\Phatc} (Q+\Phatc)^{-1}L_{\Phatc}^\top))z \\ & \ge \min_{\substack{\mathcal{X}_i\subset \mathbb R^n\\ \mathrm{dim}(\mathcal{X}_i)=i}}\ \max_{\substack{z\in \mathcal{X}_i\\ \|z\|=1}} z^\top (L_{\Phatc} (X+\Phatc)^{-1}L_{\Phatc}^\top))z \\
    &=\lambda_i(L_{\Phatc} (X+\Phatc)^{-1}L_{\Phatc}^\top))=\widehat{\theta}_i^2.
\end{align*}
The previous considerations suggest to first replace \eqref{eq:pH} with the alternative representation \eqref{eq:pHX} by computing the maximal solution $X_{\mathrm{max}}$ to \eqref{eq:kyp-lmi} in order to minimize the error bound in Theorem \ref{thm:err_bnd_lqg}. Let us emphasize that such a computation is numerically demanding and the results first of all are of theoretical nature.
Furthermore, it should be noted that the transformation corresponds to choosing a different Hamiltonian, that might in turn have a different physical meaning. Whether this affects positively or negatively the method depends on the specific application.%

\begin{remark}
    \label{rem:mixedBalancing}
    In this subsection, we demonstrated that exchanging the weighting matrix $BB^\top+2R$ by $BB^\top+2R_{X_{\mathrm{max}}}$ yields the Gramian $\Phatf = X_{\mathrm{max}}^{-1}$ instead of $\Phatf = Q^{-1}$.
    This approach allows for an alternative interpretation of the proposed balancing procedure.
    To this end, we point out that the Gramian $\Phatf = X_{\mathrm{max}}^{-1}$ is not only the unique stabilizing solution of the modified filter algebraic Riccati equation \eqref{eq:lqg_fare_ph} with weighting matrix $BB^\top+2R_{X_{\mathrm{max}}}$, but it is also the minimal solution of the dual KYP-LMI \eqref{eq:dualKYPLMI}.
    Thus, the proposed balancing procedure can be regarded as a mixture between classical LQG balanced truncation and positive real balanced truncation, in the sense that we balance the stabilizing solution of the algebraic Riccati equation \eqref{eq:lqg_care_ph} and the minimal solution of the dual KYP-LMI \eqref{eq:dualKYPLMI}.
    A similar mixed approach has been proposed in \cite{UnnVE07}, where the solution of a Lyapunov equation and the solution of an algebraic Riccati equation have been balanced.
\end{remark}

\section{Consequences for standard balanced truncation}\label{sec:classic_bt}

In this section, we briefly discuss how the above framework can be used to modify standard balanced truncation for port-Hamiltonian systems such that analogous results and error bounds hold true. 
Note that for balanced truncation realized within the effort-constraint reduction framework (see \cite{PolvdS12}), in \cite{Wuetal18} the authors have derived an error bound based on an auxiliary system. 
Our approach is different in the sense that in specific situations, a classical balanced truncation error bound can be shown to hold true.
As is common in the context of balanced truncation, we assume in this section that $A=(J-R)Q$ is asymptotically stable, i.e., all eigenvalues of $A$ lie in the open left half plane.

The main idea is to transfer the previous concepts to the setting of standard balanced truncation. 
In particular, we propose to replace the constant term $BB^\top$ by $2R$ in the standard controllability Lyapunov equation and, thus, to balance the system with respect to the solutions of the (modified) controllability and observability Lyapunov equations 
\begin{equation}\label{eq:lyap}
\begin{aligned}
A \Lcalc+ \Lcalc A^\top + 2R &=0, \\
A^\top \Mcalo + \Mcalo A + C^\top C &=0. 
\end{aligned}
\end{equation} 

With this specific choice of the constant term $2R$ in the first equation, one immediately verifies that $\Lcalc=Q^{-1}$ for a pH system \eqref{eq:pH}.
Consequently, in balanced coordinates, it holds that 
\begin{align*}
   \Ab \Pi +\Pi  \Ab^\top + 2 \Rb&=0, \\
  \Ab^\top \Pi + \Pi \Ab + \Cb^\top \Cb  &= 0,
\end{align*}
where $\Pi=\mathrm{diag}(\pi_1,\dots,\pi_n)$. In particular, we have $\Qb=\Pi^{-1}$ such that an effort-constraint reduced-order model satisfies 
\begin{align*}
    Q_r=\QbUpperLeft-\QbUpperRight\QbLowerRight^{-1} \QbLowerLeft=\QbUpperLeft=\mathrm{diag}\left(\frac1{\pi_1},\dots,\frac1{\pi_r}\right).
\end{align*}
Note that an effort-constraint reduced-order model automatically is port-Hamiltonian and is further obtained by simple truncation of a system balanced w.r.t.\@ modified system Gramians. With regard to an error bound as in Theorem \ref{thm:err_bnd_lqg}, we make an additional assumption on $R$ and $B.$ We summarize our findings in the following result. 
\begin{corollary}\label{cor:bt}
    Let $(J,R,Q,B)$ with asymptotically stable $A=(J-R)Q$ define a minimal port-Hamiltonian system of the form \eqref{eq:pH} which is balanced w.r.t.~the solutions of \eqref{eq:lyap}.
    Furthermore, let $(\widehat{A}_r,\widehat{B}_r,\widehat{C}_r)$ be a reduced-order model obtained by truncation of $(A,B,C)$ and let $c>0$ be such that $c R\succeq  \frac{1}{2}BB^\top$.
    Besides, the balanced Gramians are denoted by $\Mcalo=\Lcalc=\Pi=\mathrm{diag}(\Pi_1,\Pi_2),$ where $\Pi_2=\mathrm{diag}(\pi_{r+1},\dots,\pi_n).$ 
    If $G(\cdot)=C(\,\cdot\, I_n-A)^{-1}B$ and $\widehat{G}_r(\cdot)=\widehat{C}_r (\,\cdot\, I_r - \widehat{A}_r)^{-1} \widehat{B}_r$ are the associated transfer functions, then 
    \begin{align}\label{eq:Hinf_err_bnd}
        \|G-\widehat{G}_r\|_{\mathcal{H}_\infty} \le 2\sqrt{c} \sum_{i=r+1}^n  \pi_i.
    \end{align}
    Moreover, the reduced-order model $(\widehat{A}_r,\widehat{B}_r,\widehat{C}_r)$ is port-Hamiltonian.
\end{corollary}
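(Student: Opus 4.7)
My plan is to mimic the strategy used in the proof of Theorem \ref{thm:err_bnd_lqg}, with the Riccati-based Gramians replaced by the Lyapunov-based ones in \eqref{eq:lyap} and with a suitable scaling that exploits the assumption $cR \succeq \tfrac{1}{2}BB^\top$.

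\textbf{Step 1 (pH structure of the reduced model).} First I would verify by direct substitution, using $A=(J-R)Q$ and the structural properties of $J$ and $R$, that $\Lcalc = Q^{-1}$ solves the modified controllability Lyapunov equation in \eqref{eq:lyap} -- the computation is the same as the one establishing $\Phatf = Q^{-1}$ in the proof of Theorem \ref{thm:cont_is_pH}. In the balanced coordinates this forces $\Qb = \Pi^{-1}$ to be diagonal, so the Schur complement in the effort-constraint formula \eqref{eq:eff_const} collapses to $Q_r = Q_{11} = \Pi_1^{-1}$. Hence the reduced-order model obtained by simple truncation coincides with the effort-constraint reduction, which is known to preserve the port-Hamiltonian structure; this yields the final assertion.

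\textbf{Step 2 (Generalized Gramians).} For the error bound I would set $\mathcal{L} := c\Lcalc = c\Pi$ and view $(\mathcal{L},\Mcalo)$ as a pair of generalized Gramians in the spirit of \cite{DamB14, SanR04}: the observability equation in \eqref{eq:lyap} already has the standard right-hand side $C^\top C$, while the assumption $cR \succeq \tfrac12 BB^\top$ upgrades the modified controllability equation to the standard controllability Lyapunov inequality
\begin{equation*}
A\mathcal{L} + \mathcal{L} A^\top + BB^\top \;=\; -2cR + BB^\top \;\preceq\; 0.
\end{equation*}
Since $\Mcalo=\Lcalc=\Pi$ is diagonal, extracting the leading $(1,1)$-blocks of both relations shows that the truncated system $(\widehat{A}_r,\widehat{B}_r,\widehat{C}_r)$ inherits the analogous pair: $\mathcal{L}_1 = c\Pi_1$ satisfies the corresponding Lyapunov inequality and $\Pi_1$ satisfies the corresponding observability Lyapunov equality.

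\textbf{Step 3 (Error bound via the Damm--Benner argument).} With this setup the recursive one-state-at-a-time truncation argument from \cite{DamB14} applies verbatim, analogous to the reasoning carried out in the proof of Theorem \ref{thm:err_bnd_lqg}, but now for the single transfer function $G$ instead of a normalized coprime pair $(M,N)$. This yields
\begin{equation*}
\|G-\widehat{G}_r\|_{\mathcal{H}_\infty} \;\le\; 2\sum_{i=r+1}^{n}\sqrt{\lambda_i(\mathcal{L}\Mcalo)}_{|\text{truncated block}} \;=\; 2\sqrt{c}\sum_{i=r+1}^n \pi_i,
\end{equation*}
where in the last equality I use that, in balanced coordinates, $\mathcal{L}\Mcalo = c\Pi^{2}$ is diagonal with entries $c\pi_i^{2}$, so the generalized Hankel singular values are precisely $\sqrt{c}\,\pi_i$.

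\textbf{Main obstacle.} The only non-routine step is Step 3: the careful re-run of the Damm--Benner argument in the mixed equality/inequality setting, keeping track of the scalar $\sqrt{c}$ through the recursive truncation. This is essentially the same bookkeeping as in the proof of Theorem \ref{thm:err_bnd_lqg} (and indeed simpler, since no coprime factorization structure is needed here), so once that argument is in place the present corollary follows with minor modifications.
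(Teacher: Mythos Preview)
Your proposal is correct and follows essentially the same approach as the paper: the paper also scales the balanced Gramian to $\Lambda = c\Pi$, derives the controllability Lyapunov inequality $A\Lambda + \Lambda A^\top + BB^\top \preceq 0$ from $cR \succeq \tfrac12 BB^\top$, and then defers to the argument of Theorem~\ref{thm:err_bnd_lqg} with the obvious replacements. Your Step~1 on the pH structure is likewise the same reasoning the paper gives in the discussion preceding the corollary.
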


\begin{proof} 
Note that the assumption on $R$ and $B$ implies that for $\Lambda =c\Pi,$ we have
\begin{align*}
  0=c (A\Pi + \Pi A^\top + 2R) = A\Lambda + \Lambda A^\top +2 cR\succeq A\Lambda + \Lambda A^\top + B B^\top.
\end{align*}
The proof then follows along the lines of the proof of Theorem \ref{thm:err_bnd_lqg} with $\mathcal{L}, \Phatf $ being replaced by $\Lcalc,\Mcalo,$ and is thus omitted here.
\end{proof}

We proceed with a discussion on the condition $cR\succeq\frac{1}{2}BB^\top$ that appears in Corollary \ref{cor:bt}.
First of all, we investigate necessary and sufficient conditions for the existence of such a $c>0$.

\begin{lemma}\label{lem:cRB_cond}
    Let $R\in\mathbb{R}^{n\times n}$ with $R=R^\top\succeq0$ and $B\in\mathbb{R}^{n \times m}$ be two matrices.
    Then there exists a constant $c>0$ such that $cR\succeq \frac{1}{2}BB^\top$ if and only if $\operatorname{Im}(B)\subseteq\ker(R)^\perp$ or, equivalently, $\operatorname{Im}(B)\subseteq\operatorname{Im}(R)$.
\end{lemma}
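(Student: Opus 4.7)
The plan is to prove the two equivalent formulations of the condition first, then treat the two implications separately. The equivalence $\ker(R)^\perp = \operatorname{Im}(R)$ is the standard identity for a symmetric matrix (since $\operatorname{Im}(R^\top) = \ker(R)^\perp$ and $R=R^\top$), so it suffices to work with, say, $\operatorname{Im}(B)\subseteq\operatorname{Im}(R)$.

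For the necessity direction (``$\Rightarrow$''), I would test the inequality $cR\succeq \tfrac{1}{2}BB^\top$ against vectors $v\in\ker(R)$. Since $v^\top R v = 0$, positivity of $cR - \tfrac12 BB^\top$ forces $0 \ge \tfrac12 \|B^\top v\|_2^2$, hence $B^\top v = 0$. This gives $\ker(R)\subseteq\ker(B^\top)$, and by taking orthogonal complements one obtains $\operatorname{Im}(B) = \ker(B^\top)^\perp \subseteq \ker(R)^\perp = \operatorname{Im}(R)$.

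For the sufficiency direction (``$\Leftarrow$''), I would use the orthogonal decomposition $\mathbb{R}^n = \operatorname{Im}(R)\oplus\ker(R)$. Spectrally decompose $R = U\Lambda U^\top$ with $U\in\mathbb{R}^{n\times r}$ having orthonormal columns spanning $\operatorname{Im}(R)$ and $\Lambda\in\mathbb{R}^{r\times r}$ diagonal and strictly positive definite. The assumption $\operatorname{Im}(B)\subseteq\operatorname{Im}(R)=\operatorname{Im}(U)$ lets me write $B = U\tilde{B}$ for some $\tilde{B}\in\mathbb{R}^{r\times m}$, which yields $BB^\top = U\tilde{B}\tilde{B}^\top U^\top$. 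Therefore
\begin{equation*}
    cR - \tfrac{1}{2}BB^\top = U\bigl(c\Lambda - \tfrac{1}{2}\tilde{B}\tilde{B}^\top\bigr)U^\top,
\end{equation*}
and since $U$ has full column rank and $\ker(R)$ lies in its left null space, this matrix is positive semidefinite on $\mathbb{R}^n$ if and only if $c\Lambda - \tfrac12 \tilde B \tilde B^\top \succeq 0$ on $\mathbb{R}^r$. Because $\Lambda\succ 0$, any $c \ge \lambda_{\max}(\tilde{B}\tilde{B}^\top)/(2\lambda_{\min}(\Lambda))$ does the job.

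There is no serious obstacle here; the argument is a routine linear algebra exercise. The only point requiring a bit of care is recognizing that $BB^\top$ automatically vanishes on $\ker(R)$ under the image condition, so that one only has to compare the two forms on the (finite-dimensional) subspace $\operatorname{Im}(R)$, on which $R$ is strictly positive definite, where the existence of a suitable $c$ is immediate.
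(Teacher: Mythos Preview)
Your proof is correct and follows essentially the same approach as the paper: both handle necessity by testing the inequality against vectors in $\ker(R)$ to force $B^\top v=0$, and both handle sufficiency via a spectral decomposition of $R$ that reduces the comparison to the positive-definite block, yielding the same constant $c=\|B\|_2^2/(2\lambda_{\min}(\Lambda))$ (your $\lambda_{\max}(\tilde B\tilde B^\top)$ equals $\|B\|_2^2$ since $U$ has orthonormal columns). The only cosmetic difference is that the paper works with a full orthogonal $U\in\mathbb{R}^{n\times n}$ and block-diagonal form, whereas you use the economy factorization $U\in\mathbb{R}^{n\times r}$; the arguments are otherwise identical.
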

\begin{proof}
    Since $R$ is symmetric, it is well-known that $\ker(R)^\perp=\operatorname{Im}(R)$. Therefore it is sufficient to prove the first statement.
    Suppose first that there exists such a $c>0$. Let $z_1=Bu$ for some $u\in\mathbb{R}^m$ be a generic element of $\operatorname{Im}(B)$,
    and let $z_2\in\ker(R)$: we need to show that $z_1^\top z_2=0$.
    Since
    \begin{equation*}
        0 = z_2^\top R z_2 \geq \frac{1}{2c}z_2^\top BB^\top z_2 = \frac{1}{2c}\Vert B^\top z_2\Vert_2^2
        \quad\Rightarrow\quad B^\top z_2=0,
    \end{equation*}
    we deduce that $z_1^\top z_2=u^\top B^\top z_2=0$, thus $\operatorname{Im}(B)\subseteq\ker(R)^\perp$.
    
    Suppose now that $\operatorname{Im}(B)\subseteq\ker(R)^\perp$. 
    Note that $\ker(B^\top)=\ker(BB^\top)$, since $z^\top BB^\top z=\Vert B^\top z\Vert_2^2$ for all $z\in\mathbb{R}^n$.
    In particular, $\operatorname{Im}(BB^\top)=\ker(BB^\top)^\perp=\ker(B^\top)^\perp=\operatorname{Im}(B)\subseteq\operatorname{Im}(R)$.
    Let
    \begin{equation*}
        U^\top R U = \bmat{\Lambda & 0 \\ 0 & 0}, \quad
        0\prec \Lambda \in\mathbb{R}^{k\times k} \text{ diagonal}, \quad
        U^\top U=I_n
    \end{equation*}
    be the SVD of $R$, and let $\lambda_m>0$ be the minimum diagonal entry of $\Lambda$.
    Since $\operatorname{Im}(BB^\top)\subseteq\operatorname{Im}(R)$, we clearly have
    \begin{equation*}
        U^\top BB^\top U = \bmat{S & 0 \\ 0 & 0}, \quad
        S = S^\top \in \mathbb{R}^{k\times k}.
    \end{equation*}
    In particular, $\Vert S\Vert_2=\Vert BB^\top\Vert_2=\Vert B\Vert_2^{2}$, and for any $z=(z_1,z_2)\in\mathbb{R}^{n}=\mathbb{R}^{k+(n-k)}$ we have
    \begin{align*}
        z^\top U^\top\frac{1}{2}BB^\top Uz
        &= \frac{1}{2}z_1^\top Sz_1 \leq \frac{1}{2}\Vert S\Vert_2 z_1^\top z_1 \leq \frac{\Vert B\Vert_2^{2}}{2\lambda_m}z_1^\top\Lambda z_1 = \frac{\Vert B\Vert_2^{2}}{2\lambda_m}z^\top U^\top RUz.
    \end{align*}
    Since $U$ is invertible, this is equivalent to $cR\succeq\frac{1}{2}BB^\top$, with $c=\frac{\Vert B\Vert_2^{2}}{2\lambda_m}$.
\end{proof}

In other words, Lemma \ref{lem:cRB_cond} states that the condition of Corollary \ref{cor:bt} is satisfied exactly when the input port (represented by $B$) interacts directly with the dissipation port (represented by $R$). In particular, intuition tells us that, if the condition holds, any small input will be damped by the dissipation of the system and will preserve the pH structure, while if the condition fails, there are small inputs that will insert new energy in the system, without that the dissipation can immediately act on that. We formalize this intuition with the following result.

\begin{proposition}\label{prop:cRB_cond}
  Let a port-Hamiltonian system \eqref{eq:pH} be given by $(J,R,Q,B)$, and consider output feedbacks of the form $u=Fy=FB^\top Qx$ with a feedback matrix $F\in\mathbb{R}^{m\times m}$, leading to the closed loop system
  \begin{equation}\label{eq:closed_loop}
      \dot x = (J-R+BFB^\top)Qx.
  \end{equation}
  Then a constant $c>0$ satisfies the condition $cR\succeq \frac{1}{2}BB^\top$ if and only if \eqref{eq:closed_loop} is port-Hamiltonian with respect to $Q$ for all feedback matrices $F$ with $\Vert F\Vert_2\leq\frac{1}{2c}$.
\end{proposition}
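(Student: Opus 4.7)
The plan is to translate pH-preservation of the closed loop into a matrix inequality in $F$, $R$, and $B$, and then show that this inequality holds for every admissible $F$ exactly when $cR \succeq \tfrac{1}{2}BB^\top$. Since $Q\succ 0$ is fixed, the closed loop $\dot x=(J-R+BFB^\top)Qx$ is port-Hamiltonian with respect to $Q$ if and only if $J-R+BFB^\top$ decomposes as $\widetilde J-\widetilde R$ with $\widetilde J$ skew-symmetric and $\widetilde R\succeq 0$. Such a decomposition is unique (skew-symmetric plus negative of symmetric part), so the condition reduces to $\widetilde R = R - \tfrac{1}{2}B(F+F^\top)B^\top \succeq 0$, i.e.
\begin{equation*}
  R \succeq \tfrac{1}{2}B(F+F^\top)B^\top.
\end{equation*}

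For the forward direction, I would assume $cR\succeq \tfrac{1}{2}BB^\top$ and $\Vert F\Vert_2\le \tfrac{1}{2c}$. Then $F+F^\top\preceq 2\Vert F\Vert_2\, I_m\preceq \tfrac{1}{c}I_m$, and since $B(\,\cdot\,)B^\top$ is monotone on the symmetric cone,
\begin{equation*}
  \tfrac{1}{2}B(F+F^\top)B^\top \preceq \tfrac{1}{2c}BB^\top \preceq R,
\end{equation*}
which gives pH preservation via the equivalent condition above.

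For the converse, the key observation is that the extremal feedback $F=\tfrac{1}{2c}I_m$ is admissible, since $\Vert F\Vert_2=\tfrac{1}{2c}$, and it saturates the inequality $F+F^\top\preceq \tfrac{1}{c}I_m$ with equality. Plugging this $F$ into the pH-preservation condition yields $R\succeq \tfrac{1}{2c}BB^\top$, which is exactly $cR\succeq \tfrac{1}{2}BB^\top$.

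The only subtlety is the sign and symmetric/skew-symmetric bookkeeping in the first step; once that reformulation is set up, both implications are immediate, with the forward direction using the standard bound $F+F^\top\preceq 2\Vert F\Vert_2 I_m$ and the converse exploiting the fact that a single admissible $F$ already forces the desired semidefinite bound.
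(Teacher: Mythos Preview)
Your proof is correct and follows essentially the same approach as the paper: both reformulate the pH-preservation condition as $R\succeq \tfrac12 B(F+F^\top)B^\top$, use the extremal choice $F=\tfrac{1}{2c}I_m$ for the converse, and bound the symmetric part of $F$ by a multiple of the identity for the forward direction. Your forward direction is in fact slightly more streamlined than the paper's, which constructs auxiliary matrices $G\succeq F_s$ and $S$ with $G=S^2$ to reach the same inequality that you obtain directly from $F+F^\top\preceq 2\Vert F\Vert_2 I_m$ and the monotonicity of $B(\cdot)B^\top$.
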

\begin{proof}
    Let $\alpha=\frac{1}{2c}>0$, so that the two conditions are $R\succeq\alpha BB^\top$ and $\Vert F\Vert_2\leq\alpha$.
    Let $F$ be a generic feedback matrix with $\Vert F\Vert_2\leq\alpha$. If we split $F=\Fs+\Fk$ into its symmetric part $\Fs=\frac{1}{2}(F+F^\top)$ and skew-symmetric part $\Fk=\frac{1}{2}(F-F^\top)$, we can rewrite \eqref{eq:closed_loop} as
    \begin{equation*}
        \dot x = \big((J+B\Fk B^\top)-(R-B\Fs B^\top)\big)Qx = (J_F - R_F)Qx.
    \end{equation*}
    It is clear that this system is pH with respect to $Q$ if and only if $R_F=R_F^\top\succeq 0$.
    
    Suppose first that $R\nsucceq\alpha BB^\top$: then the feedback matrix $F=\Fs=\alpha I_m$ satisfies $\Vert F\Vert_2=\alpha$ but produces a closed loop system that is not pH with respect to $Q$, since $R_F=R-\alpha BB^\top\nsucceq 0$.
    Suppose now that $R\succeq\alpha BB^\top$. Note that $\Vert \Fs\Vert_2=\frac{1}{2}\Vert F+F^\top\Vert_2\leq\Vert F\Vert_2\leq\alpha$, therefore we can assume without loss of generality that $F=F^\top=\Fs$.
    Since $F$ is symmetric, there exist a matrix $G=G^\top\succeq 0$ such that $F\preceq G$ and $\Vert G\Vert_2=\Vert F\Vert_2\leq\alpha$, that can be constructed by taking the spectral decomposition of $F$ and replacing all negative eigenvalues with their absolute value, and a matrix $S=S^\top\succeq 0$, such that $G=S^2$ and $\Vert S\Vert_2^2=\Vert G\Vert_2\leq\alpha$, that can be constructed by replacing the eigenvalues of $G$ with their square root.
    In particular, we have
    \begin{align*}
        z^\top BFB^\top z &\leq z^\top BGB^\top z = z^\top BS^2B^\top z = \Vert SB^{\top}z\Vert_2^2 \leq \Vert S\Vert_2^2\Vert B^{\top}z\Vert_2^{2} \leq \alpha z^\top BB^\top z \leq z^\top Rz,
    \end{align*}
    i.e., $R_F=R-BFB^\top\succeq 0$, as requested.
\end{proof}

It should be stressed that Proposition \ref{prop:cRB_cond} does not provide necessary conditions for the port-Hamiltonian system to admit structure-preserving output feedback.
In fact, any output feedback of the form $u=Fy$ with $F+F^\top\preceq 0$ will lead to a closed loop port-Hamiltonian system of the form $\dot x=(\tilde J-\tilde R)Qx$, with $\tilde J=J+\frac{1}{2}B(F-F^\top)B^\top$ and $\tilde R=R-\frac{1}{2}B(F+F^\top)B^\top$ satisfying $\tilde J=-\tilde J^\top$ and $\tilde R=\tilde R^\top\succeq 0$, regardless of the form of $R$ and $B$, and of the magnitude of $\lVert F\rVert_2$.

In fact, Proposition \ref{prop:cRB_cond} provides conditions for the existence of output feedback with bounded norm that \emph{destroys} the port-Hamiltonian structure.
This can for example be useful in one of the following scenarios: if we expect disturbances in the output feedback, in which case $c$ can provide a measure of robustness, or if we are actually interested in destabilizing the system.

\begin{remark}
    It is clear that, given two matrices $R=R^\top\succeq0$ and $B$, the set
    \begin{equation*}
        \Omega(R,B) = \left\{ c\in\mathbb{R} \mid cR \succeq \tfrac{1}{2}BB^\top \right\}
    \end{equation*}
    is either empty or a closed infinite left-bounded interval of the form $[\overline c,\infty)$, where $\overline{c}\geq 0$.
    Note that in the cases we are interested in we actually have $\overline{c}>0$ since $\overline c=0$ would imply $B=0$.
    Since the error bound in Corollary \ref{cor:bt} holds for any $c\in\Omega$, we are particularly interested in the optimal value $\overline{c}$, or equivalently $\overline{\alpha}=\frac{1}{2\overline{c}}$, for which the error bound is minimal.
    
    One way to find the optimal $\overline{\alpha}$ is offered by Lemma \ref{lem:cRB_cond}.
    Since $R=R^\top\succeq 0$, there exists an invertible matrix $P\in\mathbb{R}^{n\times n}$, such that
    \begin{equation*}
        P^\top RP = \bmat{I_k & 0 \\ 0 & 0}, \quad\text{with}\quad
        P = U\bmat{\Lambda^{-\frac{1}{2}} & 0 \\ 0 & I_{n-k}}
    \end{equation*}
    where $U,\Lambda$ are as in the proof of Lemma \ref{lem:cRB_cond}.
    If there exists a constant $c>0$ satisfying the condition, then because of the same Lemma we have
    \begin{equation*}
        P^\top BB^\top P = \bmat{ \Lambda^{-\frac{1}{2}}S\Lambda^{-\frac{1}{2}} & 0 \\ 0 & 0 }
        = \bmat{ \widehat{S} & 0 \\ 0 & 0 },
    \end{equation*}
    where $S=S^\top\geq0$ is again as in the proof of Lemma \ref{lem:cRB_cond}.
    Let now $\widehat{S}=V^\top\widehat{\Lambda}V$ be the spectral decomposition of $\widehat{S}$, where $V^\top V=I_n$ and $\widehat{\Lambda}\succeq 0$ is diagonal.
    Then it is clear that
    \begin{equation*}
        R \succeq \alpha BB^\top \quad\iff\quad
        P^\top RP \succeq \alpha P^\top BB^\top P \quad\iff\quad
        I_k \succeq \alpha \widehat{S} \quad\iff\quad
        I_k \succeq \alpha \widehat{\Lambda},
    \end{equation*}
    therefore the optimal $\overline{\alpha}$ is the reciprocal of the largest eigenvalue of $P^\top BB^\top P$.
    
    The optimal value $\overline{\alpha}$ has in particular a nice interpretation. Due to Proposition \ref{prop:cRB_cond}: it represents the \emph{radius of structure-preservation} of the pH system \eqref{eq:pH} under output feedback, for a fixed $Q$.
    A possible strategy to decrease the error bound is then to consider a different pH representation, given by a different matrix $X$ (see \eqref{eq:pHX}), such that the radius of structure-preservation is larger.
    Changing $Q$ unfortunately will in general also change the values $\pi_i$, therefore it is not always clear whether maximizing the radius of structure-preservation is a good solution.
\end{remark}

When there is no $c>0$ such that the condition $cR\succeq\frac{1}{2}BB^\top$ is satisfied, we might want to look for a different error bound.
Similarly as in \eqref{eq:popov_aux}, for every $Y=Y^\top \in \mathbb R^{n\times n}$, we may decompose the Popov function as 
\begin{align}
    \Phi(s)=\begin{bmatrix} C(sI_n-A)^{-1} & I_m \end{bmatrix} \underbrace{\begin{bmatrix} -AY - YA^\top & B-YC^\top \\
 B^\top - CY & 0 \end{bmatrix}}_{:=\widehat{W}(Y)} \begin{bmatrix} (-sI_n-A^\top)^{-1}C^\top \\ I_m \end{bmatrix}.
\end{align}
In particular, if $Y$ is such that $\widehat{W}(Y)\succeq 0,$ a Cholesky decomposition yields
\begin{align*}
    \Phi(s)&=\begin{bmatrix} C(sI_n-A)^{-1} & I_m \end{bmatrix} \begin{bmatrix} L_Y^\top \\0  \end{bmatrix} \begin{bmatrix} L_Y & 0 \end{bmatrix}
    \begin{bmatrix} (-sI_n-A^\top)^{-1}C^\top \\ I_m \end{bmatrix} \\
    &= C(sI_n-A)^{-1}L_Y^\top L_Y(-sI_n-A^\top)^{-1}C^\top=V(s)V(-s)^\top,
\end{align*}
where $V(\cdot)=C(\cdot I_n-A)^{-1}L_Y^{\top}$ denotes a spectral factor of the Popov function $\Phi$. For a pH system \eqref{eq:pH}, we immediately have the solution $Y=Q^{-1}$ with the property:
\begin{align*}
    -AY-YA^\top = -AQ^{-1}-Q^{-1}A^\top = -(J-R)-(J^\top-R)=2R.
\end{align*}
Hence, with the Cholesky decomposition $2R=L_R^\top L_R$, we obtain the spectral factor $V(\cdot) = C(\cdot I_n-A)^{-1} L_R^\top $. With regard to balancing of the solutions to \eqref{eq:lyap}, observe that this corresponds to classical balanced truncation of a system described by $V(\cdot)= C(\cdot I_n-A)^{-1} L_R^\top$. As a consequence, we have the following result.

\begin{corollary}\label{cor:spec_bt}
    Let $(J,R,Q,B)$ with asymptotically stable $A=(J-R)Q$ and minimal $(A,R,C)$ define a port-Hamiltonian system of the form \eqref{eq:pH} which is balanced w.r.t.~the solutions of \eqref{eq:lyap}.
    Furthermore, let $(\widehat{A}_r,\widehat{B}_r,\widehat{C}_r)$ be a reduced-order model obtained by truncation of $(A,B,C)$.
    Besides, the balanced Gramians are denoted by $\Mcalo=\Lcalc=\Pi=\mathrm{diag}(\Pi_1,\Pi_2),$ where $\Pi_2=\mathrm{diag}(\pi_{r+1},\dots,\pi_n).$ 
    If $V(\cdot)=C(\cdot I_n-A)^{-1}L_R^\top$ and $\widehat{V}_r(\cdot)=\widehat{C}_r (\cdot I_r - \widehat{A}_r)^{-1} \widehat{L}_{\widehat{R}_r}^\top$ are spectral factors of the associated Popov function, then 
    \begin{align}\label{eq:Hinf_err_bnd_V}
        \|V-\widehat{V}_r\|_{\mathcal{H}_\infty} \le 2 \sum_{i=r+1}^n  \pi_i.
    \end{align}
    Moreover, the reduced-order model $(\widehat{A}_r,\widehat{B}_r,\widehat{C}_r)$ is port-Hamiltonian.
\end{corollary}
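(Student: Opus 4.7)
The key observation is that the Lyapunov equations \eqref{eq:lyap} are nothing but the standard controllability and observability Lyapunov equations associated with the auxiliary system $(A,L_R^\top,C)$ whose transfer function is precisely the spectral factor $V(s)=C(sI_n-A)^{-1}L_R^\top$. Indeed, $L_R^\top L_R=2R$ by construction, so the first equation in \eqref{eq:lyap} reads $A\Lcalc+\Lcalc A^\top+L_R^\top L_R=0$, which is the standard controllability equation for $(A,L_R^\top)$. Moreover, since $\mathrm{Im}(L_R^\top)=\mathrm{Im}(R)$, the minimality assumption on $(A,R,C)$ translates directly into minimality of $(A,L_R^\top,C)$. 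Therefore the prescribed balancing is classical system balancing applied to this auxiliary stable minimal system, and the diagonal entries $\pi_i$ are exactly its Hankel singular values.

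Next I would verify that the reduced spectral factor $\widehat V_r$ is the transfer function obtained by standard truncation of $(A,L_R^\top,C)$. Partition $L_R=[L_{R,1},L_{R,2}]$ conformably with the balanced block splitting. Then
\begin{equation*}
2R=L_R^\top L_R=\begin{bmatrix}L_{R,1}^\top L_{R,1} & L_{R,1}^\top L_{R,2}\\ L_{R,2}^\top L_{R,1} & L_{R,2}^\top L_{R,2}\end{bmatrix},
\end{equation*}
so the $(1,1)$-block gives $2\widehat R_r = 2R_{11}=L_{R,1}^\top L_{R,1}$, i.e., $L_{R,1}$ is a valid choice for $\widehat L_{\widehat R_r}$. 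Hence truncating the input matrix $L_R^\top$ yields exactly $\widehat L_{\widehat R_r}^\top$, and the truncated auxiliary system has transfer function $\widehat V_r(s)$. With these two identifications in place, the bound \eqref{eq:Hinf_err_bnd_V} is an immediate consequence of the classical $\mathcal H_\infty$ balanced truncation error bound applied to $(A,L_R^\top,C)$, so no new Lyapunov-inequality argument in the style of Theorem \ref{thm:err_bnd_lqg} or Corollary \ref{cor:bt} is required.

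It remains to verify the port-Hamiltonian property of the reduced model, which I would obtain by reusing the observation already made in this section: for any pH system, $\Lcalc=Q^{-1}$, hence after balancing $\Qb=\Pi^{-1}$ is diagonal. The Schur complement in \eqref{eq:eff_const} therefore collapses to $\widehat Q_r=Q_{11}=\Pi_1^{-1}$, so effort-constraint reduction coincides with plain truncation and the reduced-order matrices $(\widehat J_r,\widehat R_r,\widehat Q_r,\widehat B_r)=(J_{11},R_{11},\Pi_1^{-1},B_1)$ form a pH realization of $(\widehat A_r,\widehat B_r,\widehat C_r)$. The only mildly delicate point in the whole argument is the block-triangular compatibility of the Cholesky factor $L_R$ with truncation; beyond that, the statement reduces to an application of the classical result on standard balanced truncation.
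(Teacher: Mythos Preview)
Your proposal is correct and follows essentially the same approach as the paper. The paper does not give an explicit proof of the corollary but makes the key observation immediately before stating it: balancing w.r.t.\ the solutions of \eqref{eq:lyap} ``corresponds to classical balanced truncation of a system described by $V(\cdot)= C(\cdot I_n-A)^{-1} L_R^\top$,'' and the port-Hamiltonian property of the reduced model is established in the preceding paragraph via the diagonality of $\Qb=\Pi^{-1}$; your write-up spells out precisely these two ingredients, including the identification $\widehat L_{\widehat R_r}^\top = W^\top L_R^\top = L_{R,1}^\top$ that the paper records in Remark~\ref{rem:compare_spec_fac}.
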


\begin{remark}\label{rem:sr-balancing}
While the previous result does not require any condition for $B$, it is based on minimality of the triple $(A,R,C).$ This is however a less restrictive assumption than the one from Corollary \ref{cor:bt}. Indeed, if $(A,B,C)$ is minimal and if there exists $c>0$ as in Corollary \ref{cor:bt}, then with Lemma~\ref{lem:cRB_cond} it follows that $(A,R,C)$ is also minimal. Moreover, it is well-known that minimality can always be ensured by a  \emph{low rank square root} balancing transformation, see, e.g., \cite[Theorem 2.2]{TomP87}, \cite[Section 4.1]{Pen06} and \cite{BenQ99}.  
\end{remark}

\begin{remark}
 The bound \eqref{eq:Hinf_err_bnd_V} is structurally similar to the well-known bounded real balanced truncation error bound for the stable minimum phase spectral factors, see \cite{OpdJ88}.
\end{remark}

\begin{remark}\label{rem:compare_spec_fac}
For the error bound \eqref{eq:Hinf_err_bnd_V}, it is crucial that $L_R^\top $ and $\widehat{L}_{\widehat{R}_r}^\top$ have the same number of columns since otherwise a comparison in the $\mathcal{H}_\infty$-norm does not make sense. A naive computation of $\widehat{L}_{\widehat{R}_r}^\top$ would limit the number of columns by the reduced system dimension and, hence, would generally be smaller than the number of columns of $L_R^\top$. On the other hand, from \eqref{eq:pH_red} it makes sense to define $\widehat{L}_{\widehat{R}_r}^\top$ via $    \widehat{L}_{\widehat{R}_r}^\top = W^\top L_R^\top$
which automatically ensures identical number of columns.
\end{remark}

Following the discussion in Section \ref{subsec:choosing_Q}, we may again ask whether there exists a particularly good pH representation such that the previous error bounds are minimized. 
With this in mind, it is obvious to minimize the eigenvalues $\lambda_i(\Mcalo \Lcalc) = \lambda_i(\Mcalo Q^{-1})$. 
If $Y=Q^{-1}$ is such that $\widehat{W}(Y)\succeq 0$, it then follows that $X=Y^{-1}=Q$ is a solution to \eqref{eq:kyp-lmi} and vice versa.
Consequently, if $X$ is a maximal solution to \eqref{eq:kyp-lmi} and the system is replaced by the alternative pH formulation \eqref{eq:pHX}, then the error bound will be minimal. 
This will be further illustrated in the numerical examples.

Similarly as in Remark~\ref{rem:mixedBalancing}, we note that the approach outlined in the last paragraph can be viewed as a mixed balancing procedure, where the solution of the observability Lyapunov equation in \eqref{eq:lyap} is balanced with the minimal solution of the dual KYP-LMI \eqref{eq:dualKYPLMI}.
Consequently, this approach coincides with the balancing procedure presented in \cite{UnnVE07}.
In contrast to \cite{UnnVE07}, we are not only able to show that the reduced-order models are port-Hamiltonian and, thus, passive, but also to derive two computable a priori error bounds, which hold under some additional assumptions on the dissipation matrix $R$, cf.~Corollaries~\ref{cor:bt} and \ref{cor:spec_bt}.

\section{Numerical examples} 

In this section, we provide two numerical examples that naturally lead to port-Hamiltonian systems. 
Let us emphasize that the focus of this section is on illustrating the theory rather than demonstrating that the new methods outperform existing ones or than applying the methods to large-scale systems. 
Thus, we only consider systems of moderate state space dimension and focus on the error bounds and on demonstrating the influence of the different choices for the Hamiltonian on these bounds.

All simulations were generated on an Intel i5-9400F @ 4.1 GHz x 6,
64 GB RAM,  \matlab \;version R2019b. 

With regard to the implementation of the individual methods, the following remarks are in order:
\begin{itemize}
    \item For obtaining the stabilizing solutions to Riccati equations, we relied on the \matlab\; built-in routine \texttt{icare}. Similarly, for the computations of the $\mathcal{H}_{\infty}$-errors, we used the \texttt{Control System Toolbox}.
    \item The extremal solutions to the KYP-LMI \eqref{eq:kyp-lmi} were computed by a regularization (see \cite[Theorem 2]{Wil72a}) approach based on an artificial feedthrough term $D+D^\top=10^{-12}I_m$ which allowed to replace the LMI
    \begin{align*}
        \begin{bmatrix} -A^\top X - XA & C^\top -XB \\ C-B^\top X & D+D^\top \end{bmatrix} \succeq 0   \hspace{2cm}
    \intertext{by the Riccati equation associated with the Schur complement, i.e.,}
        A^\top X + XA + (C^\top -XB) (D+D^\top)^{-1} (C-B^\top X) = 0.
    \end{align*}
    \item In order to avoid numerically ill-conditioned operations such as, e.g., the computation of the Schur complement in the effort-constraint reduction method, cf.\@ \eqref{eq:eff_const}, numerically minimal realizations have been computed in a preprocessing step.
        For this, we utilized the approach from Section \ref{sec:classic_bt} due to its structure and Hamiltonian preserving nature with a truncation threshold $\varepsilon_{\mathrm{trunc}}=10^{-11}$,  cf.~Remark \ref{rem:sr-balancing}.
\end{itemize}

\subsection{A scalable mass-spring-damper system} \label{sec:massSpringDamper}

The first example is a scalable multiple-input and multiple-output mass-spring-damper system that has been introduced in \cite{Gugetal12}. The system consist of two inputs $u_1,u_2$ acting as external forces applied to the first two masses of the system. In view of the port-Hamiltonian framework, the outputs $y_1,y_2$ are the velocities of the masses. We refrain from a more detailed discussion and instead refer to the original presentation in \cite{Gugetal12}. For our numerical simulations, we follow the parameters used in the latter reference and define the masses $m_i$, spring constants $k_i$ and damping constants $c_i$ as $m_i=4,k_i=4,$ and $c_i=1$ for all $i=1,\dots,n$. Here, we use a system consisting of $\ell=500$ masses, resulting in an original system of dimension $n=1000$ defined by the matrices $(J,R,Q,B)$.

In Figure \ref{fig:msd}, we show the results obtained for the method from Theorem \ref{thm:err_bnd_lqg} for three different port-Hamiltonian realizations as discussed in Section \ref{subsec:choosing_Q}. In particular, besides the canonical Hamiltonian function $\mathcal{H}(x)=\frac12 x^\top Qx$ resulting from the modelling, we also include the results for a Hamiltonian corresponding to the extremal solutions $X_{\mathrm{min}}$ and $X_{\mathrm{max}}$ to \eqref{eq:kyp-lmi}. 
Furthermore, we include a comparison to the classical version of LQG balanced truncation. 
The observations are as follows. As seen in Figure \ref{fig:msd_errorDecayOfTransferFunction}, the error bounds for $\left\|\begin{bsmallmatrix} M \\ N \end{bsmallmatrix}-\begin{bsmallmatrix} \widehat{M}_r \\ \widehat{N}_r \end{bsmallmatrix}\right\|_{\mathcal{H}_\infty}$ follow the actual behavior of the error. Moreover, as predicted by the discussion in Section \ref{subsec:choosing_Q}, the error bound clearly depends on the chosen Hamiltonian, with the maximal solution of \eqref{eq:kyp-lmi}  being favorable compared to other choices. The minimal solution of \eqref{eq:kyp-lmi} yields a closed loop error (bound) that is almost stagnating. While error and error bound for LQG balanced truncation is smallest, let us recall that the reduced controllers will generally not be port-Hamiltonian. 

For investigating the effect on the open-loop behavior, we further show in Figure~\ref{fig:msd_errorDecayOfSpectralFactors} the standard $\mathcal{H}_{\infty}$-error for the different methods. Note that neither for the method from Theorem \ref{thm:err_bnd_lqg} nor for LQG balanced truncation, an error bound is available for this open loop behavior. We also include reduced-order models obtained by effort-constraint balancing w.r.t.\@ the standard LQG Riccati equations, since these are guaranteed to be port-Hamiltonian as well. The conclusions are similar to the closed loop behavior, indicating that if the maximal solution to the KYP-LMI \eqref{eq:kyp-lmi} defines the Hamiltonian, the approximability of the systems is maximized. 
 
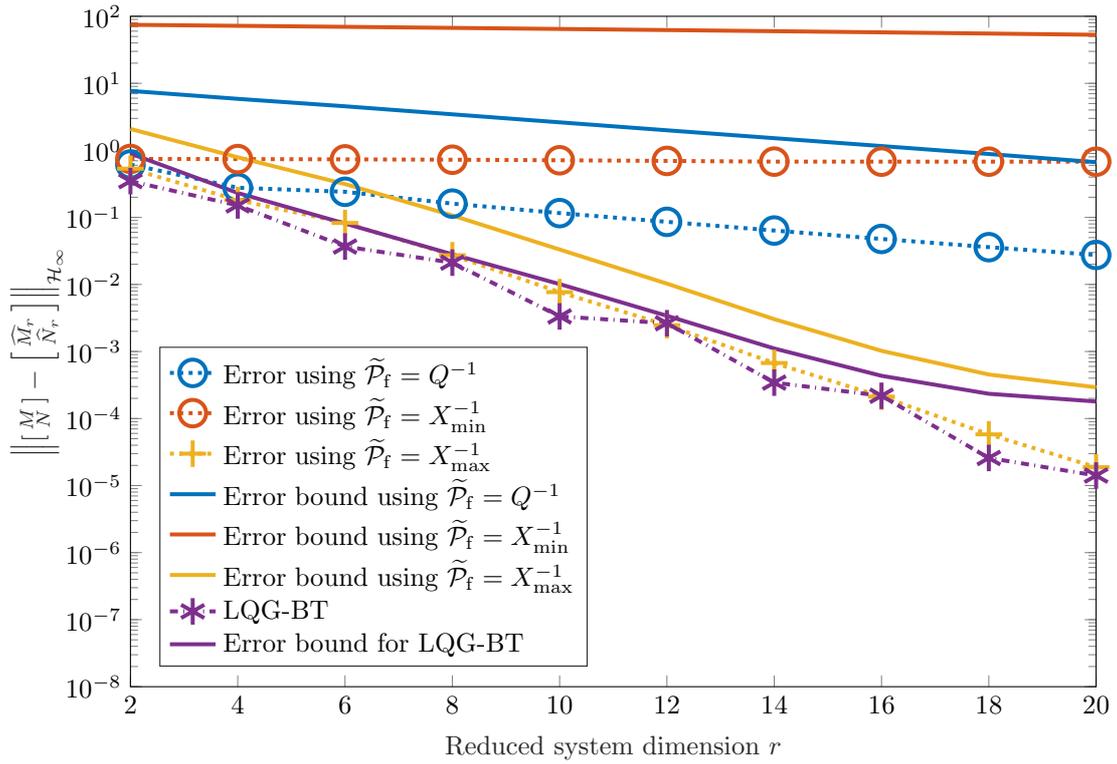
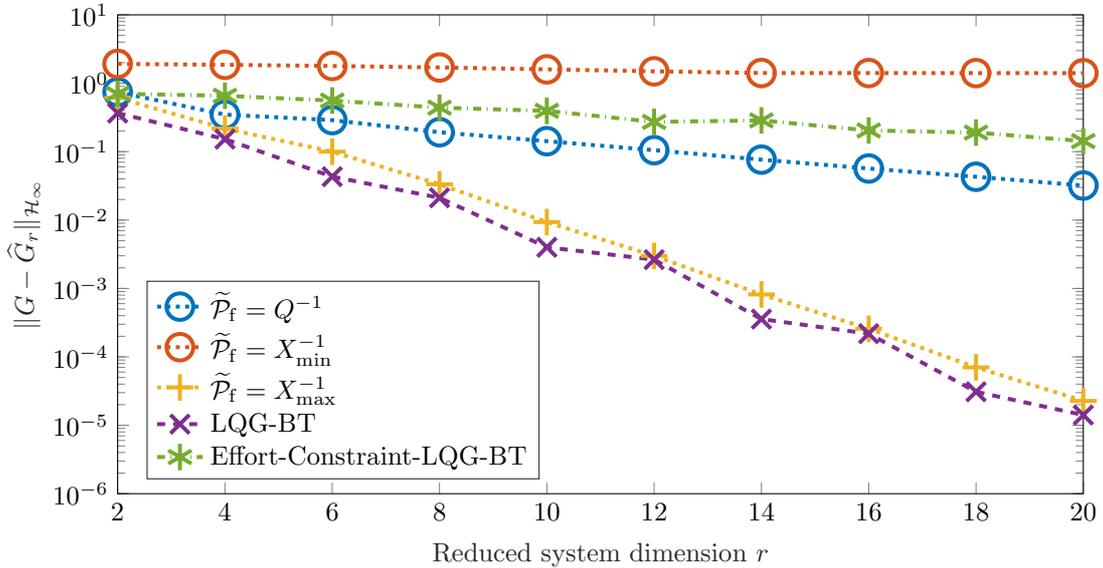
\begin{figure}[tb]
    \centering
    \begin{subfigure}{\textwidth}
        \centering
        % This file was created by matlab2tikz.
%
%The latest updates can be retrieved from
%  http://www.mathworks.com/matlabcentral/fileexchange/22022-matlab2tikz-matlab2tikz
%where you can also make suggestions and rate matlab2tikz.
%
\definecolor{mycolor1}{rgb}{0.00000,0.44700,0.74100}%
\definecolor{mycolor2}{rgb}{0.85000,0.32500,0.09800}%
\definecolor{mycolor3}{rgb}{0.92900,0.69400,0.12500}%
\definecolor{mycolor4}{rgb}{0.49400,0.18400,0.55600}%
\begin{tikzpicture}

\begin{axis}[%
width=5in,
height=3.5in,
scale only axis,
xmin=2,
xmax=20,
xlabel style={font=\color{white!15!black}},
xlabel={Reduced system dimension $r$},
ymode=log,
ymin=1e-08,
ymax=100,
yminorticks=true,
ylabel style={font=\color{white!15!black}},
ylabel={$\left\| \begin{bsmallmatrix} M \\ N \end{bsmallmatrix}  - \begin{bsmallmatrix} \widehat{M}_r \\ \widehat{N}_r \end{bsmallmatrix}\right\|_{\mathcal{H}_\infty}$},
axis background/.style={fill=white},
legend style={legend cell align=left, align=left, draw=white!15!black},
legend pos = south west
]
\addplot [color=mycolor1, dotted, line width=1.5pt, mark size=5pt, mark=o, mark options={solid, mycolor1}]
  table[row sep=crcr]{
2	0.616652157870202\\
4	0.277272578220866\\
6	0.241123093052545\\
8	0.160764300631193\\
10	0.115940783061776\\
12	0.0859689116261024\\
14	0.0637436509492213\\
16	0.0477386033993415\\
18	0.0359997061355149\\
20	0.0275555197714684\\
};
\addlegendentry{Error using $\Ptildef=Q^{-1}$}

\addplot [color=mycolor2, dotted, line width=1.5pt, mark size=5pt, mark=o, mark options={solid, mycolor2}]
  table[row sep=crcr]{
2	0.74522932320008\\
4	0.74147357811654\\
6	0.735077637437803\\
8	0.725606570154768\\
10	0.712075632990484\\
12	0.696279087812919\\
14	0.679353439198416\\
16	0.678957918819807\\
18	0.678651300761184\\
20	0.678006482404585\\
};
\addlegendentry{Error using $\Ptildef=X_{\mathrm{min}}^{-1}$}

\addplot [color=mycolor3, dotted, line width=1.5pt, mark size=5pt, mark=+, mark options={solid, mycolor3}]
  table[row sep=crcr]{
2	0.532903708119665\\
4	0.181139568373084\\
6	0.0825848906202101\\
8	0.0272839032547585\\
10	0.00768863826365715\\
12	0.00246865331394027\\
14	0.000673722753640097\\
16	0.000212379172052109\\
18	5.8061918624414e-05\\
20	1.87851492202945e-05\\
};
\addlegendentry{Error using $\Ptildef=X_{\mathrm{max}}^{-1}$}

\addplot [color=mycolor1, line width=1.5pt]
  table[row sep=crcr]{
2	7.71093506765029\\
4	5.86897914960725\\
6	4.5449708395334\\
8	3.45024624405592\\
10	2.62048326995051\\
12	1.99341371775233\\
14	1.52006457167515\\
16	1.16016106416234\\
18	0.883171499753929\\
20	0.66701767783943\\
};
\addlegendentry{Error bound using $\Ptildef=Q^{-1}$}

\addplot [color=mycolor2, line width=1.5pt]
  table[row sep=crcr]{
2	74.6226897754183\\
4	72.1089351268986\\
6	69.6130219796564\\
8	67.1420802388446\\
10	64.7031459934075\\
12	62.3030131122123\\
14	59.9476880769594\\
16	57.6126198523009\\
18	55.284885785315\\
20	52.969437184034\\
};
\addlegendentry{Error bound using $\Ptildef=X_{\mathrm{min}}^{-1}$}

\addplot [color=mycolor3, line width=1.5pt]
  table[row sep=crcr]{
2	2.08624523864027\\
4	0.789367115437814\\
6	0.314572703812744\\
8	0.107028392714165\\
10	0.0332108505604698\\
12	0.0102886322092784\\
14	0.00303592337140188\\
16	0.00102266589472515\\
18	0.00045374624093975\\
20	0.00029261449597759\\
};
\addlegendentry{Error bound using $\Ptildef=X_{\mathrm{max}}^{-1}$}

\addplot [color=mycolor4, dashdotted, line width=1.5pt, mark size=5pt, mark=asterisk, mark options={solid, mycolor4}]
  table[row sep=crcr]{
2	0.34763556134791\\
4	0.151025604304131\\
6	0.0368125920197954\\
8	0.0211461685955804\\
10	0.003335308316417\\
12	0.00263974524691548\\
14	0.000343377933554418\\
16	0.000218395776954716\\
18	2.58144964831721e-05\\
20	1.41170048028696e-05\\
};
\addlegendentry{LQG-BT}

\addplot [color=mycolor4, line width=1.5pt]
  table[row sep=crcr]{
2	0.89683991272191\\
4	0.23138416404063\\
6	0.0810608579082583\\
8	0.0281832764921035\\
10	0.0101282140209546\\
12	0.00340788320141734\\
14	0.00110928517660427\\
16	0.00043379355023259\\
18	0.000234027211272973\\
20	0.000179676900305956\\
};
\addlegendentry{Error bound for LQG-BT}
\end{axis}
\end{tikzpicture}%
        \caption{Decay of the $\mathcal{H}_{\infty}$-error of the coprime factors for the approach from subsection \ref{subsec:modifiedLQG} using three different choices for the Hamiltonian. The solid lines represent the respective error bounds as presented in Theorem \ref{thm:err_bnd_lqg}. As a reference, we also added the error decay for classical LQG balanced truncation and the corresponding error bound.}
        \label{fig:msd_errorDecayOfTransferFunction}
    \end{subfigure}
    \\[0.5cm]
    \begin{subfigure}{\textwidth}
        \centering
        % This file was created by matlab2tikz.
%
%The latest updates can be retrieved from
%  http://www.mathworks.com/matlabcentral/fileexchange/22022-matlab2tikz-matlab2tikz
%where you can also make suggestions and rate matlab2tikz.
%
\definecolor{mycolor1}{rgb}{0.00000,0.44700,0.74100}%
\definecolor{mycolor2}{rgb}{0.85000,0.32500,0.09800}%
\definecolor{mycolor3}{rgb}{0.92900,0.69400,0.12500}%
\definecolor{mycolor4}{rgb}{0.49400,0.18400,0.55600}%
\definecolor{mycolor5}{rgb}{0.46600,0.67400,0.18800}%
\begin{tikzpicture}

\begin{axis}[%
width=5in,
height=2.5in,
scale only axis,
xmin=2,
xmax=20,
xlabel style={font=\color{white!15!black}},
xlabel={Reduced system dimension $r$},
ymode=log,
ymin=1e-06,
ymax=10,
yminorticks=true,
ylabel style={font=\color{white!15!black}},
ylabel={$\|G-\widehat{G}_r\|_{\mathcal{H}_\infty}$},
axis background/.style={fill=white},
legend style={legend cell align=left, align=left, draw=white!15!black},
legend pos = south west
]
\addplot [color=mycolor1, dotted, line width=1.5pt, mark size=5pt, mark=o, mark options={solid, mycolor1}]
  table[row sep=crcr]{2	0.744082872810326\\
4	0.34756764933229\\
6	0.288926424116777\\
8	0.192124410323105\\
10	0.142219848663793\\
12	0.105057124458992\\
14	0.0763889779142869\\
16	0.056698566498536\\
18	0.0429236502550877\\
20	0.0319644236037633\\
};
\addlegendentry{$\Ptildef=Q^{-1}$}

\addplot [color=mycolor2, dotted, line width=1.5pt, mark size=5pt, mark=o, mark options={solid, mycolor2}]
  table[row sep=crcr]{2	1.91874543786029\\
4	1.8609578363944\\
6	1.78788668582218\\
8	1.70201746676392\\
10	1.5960444410127\\
12	1.49473364813163\\
14	1.41094706757945\\
16	1.40946933215982\\
18	1.40828582603525\\
20	1.40567294180739\\
};
\addlegendentry{$\Ptildef=X_{\mathrm{min}}^{-1}$}

\addplot [color=mycolor3, dotted, line width=1.5pt, mark size=5pt, mark=+, mark options={solid, mycolor3}]
  table[row sep=crcr]{2	0.611690630086815\\
4	0.217526268909017\\
6	0.0998961024781227\\
8	0.0332709878135346\\
10	0.00935102852517833\\
12	0.00299350312733037\\
14	0.00081684805339807\\
16	0.000257346109241097\\
18	7.03026067168997e-05\\
20	2.27079199574001e-05\\
};
\addlegendentry{$\Ptildef=X_{\mathrm{max}}^{-1}$}

\addplot[color=mycolor4, dashed, line width=1.5pt, mark size=5pt, mark=x, mark options={solid, mycolor4}]
  table[row sep=crcr]{2	0.36614683224583\\
4	0.152329756060591\\
6	0.0431020235579909\\
8	0.0211497151721212\\
10	0.00400021549465604\\
12	0.00263975214523832\\
14	0.000356515456415654\\
16	0.000218395780856599\\
18	3.08902825446482e-05\\
20	1.41246335765469e-05\\
};
\addlegendentry{LQG-BT}

\addplot [color=mycolor5, dashdotted, line width=1.5pt, mark size=5pt, mark=asterisk, mark options={solid, mycolor5}]
  table[row sep=crcr]{2	0.702066095732823\\
4	0.655981005863506\\
6	0.558431807312606\\
8	0.439757675040894\\
10	0.394811880717265\\
12	0.271940954956295\\
14	0.286309898421114\\
16	0.203997088834261\\
18	0.190184047077736\\
20	0.142474354203826\\
};
\addlegendentry{Effort-Constraint-LQG-BT}
\end{axis}
\end{tikzpicture}%
        \caption{Decay of the $\mathcal{H}_\infty$ error of the transfer function for the approach from subsection \ref{subsec:modifiedLQG} using three different choices for the Hamiltonian. As a reference, we also added the error decay for classical LQG balanced truncation and for the effort-constraint reduction based on balancing the standard LQG Riccati equations.}
        \label{fig:msd_errorDecayOfSpectralFactors}
    \end{subfigure}
    \caption{Comparison of closed and open loop errors for a mass-spring-damper system with $n=1000$.}
    \label{fig:msd}
\end{figure}
  
\subsection{Damped wave propagation}
 
In this subsection we consider the set of linear partial differential equations
\begin{alignat}{2}
    a\partial_t p(t,x)  &= -\partial_x q(t,x),\qquad && \text{for all }(t,x)\in (0,\tEnd)\times(0,\ell), \label{eq:massBalance}\\
    b\partial_t q(t,x)  &= -\partial_x p(t,x)-dq(t,x),\qquad && \text{for all }(t,x)\in (0,\tEnd)\times(0,\ell),
    \label{eq:momentumBalance}
\end{alignat}
which describe the damped propagation of pressure waves in a pipeline, see for instance \cite{EggKLMM18}.
The unknowns of this system are the mass flow $q$ and the relative pressure $p$, where the latter one represents the deviation of the absolute pressure $\pAbs$ to a constant reference pressure $\pAbs_0$, i.e., $p\vcentcolon=\pAbs-\pAbs_0$.
Furthermore, $a,b\in\R_{>0}$ are parameters derived from the properties of the fluid and from the geometry of the pipeline, cf{.} \cite{EggKLMM18}.
Besides, the dissipation within the pipeline is modelled by the damping parameter $d\in\R_{>0}$.
Finally, $\tEnd$ denotes the length of the considered time interval and $\ell$ the length of the pipe.

In addition to the partial differential equations \eqref{eq:massBalance} and \eqref{eq:momentumBalance}, we consider homogeneous initial conditions as well as the boundary conditions 
\begin{alignat*}{4}
    p(t,0) &= u_1(t),\qquad &&p(t,\ell) &&= u_2(t),\qquad && \text{for all }t\in (0,\tEnd)
\end{alignat*}
with given functions $u_1$ and $u_2$.

For the semi-discretization in space we use the mixed finite element method as outlined in the appendix of \cite{EggKLMM18}.
To this end, we decompose the computational domain $[0,\ell]$ by an equidistant mesh with $N$ inner grid points and mesh width $h=\frac{\ell}{N+1}$.
Based on this mesh, the relative pressure $p(t,\cdot)$ is approximated by a piecewise constant function and the mass flow $q(t,\cdot)$ by a piecewise linear function.
As a result, we obtain after Galerkin projection the linear system of ordinary differential equations
\begin{equation}
    \label{eq:semidiscreteISO2}
    \begin{bmatrix}
        aM_1  & 0\\
        0     & bM_2
    \end{bmatrix}
    \begin{bmatrix}
        \dot{p}_h(t)\\
        \dot{q}_h(t)
    \end{bmatrix}
    =
    \begin{bmatrix}
        0       & -D\\
        D^\top  & -dM_2
    \end{bmatrix}
    \begin{bmatrix}
        p_h(t)\\
        q_h(t)
    \end{bmatrix}
    +
    \begin{bmatrix}
        0\\
        B_2
    \end{bmatrix}
    u(t).
\end{equation}
Here, the vectors $p_h(t)\in\R^{N+1}$ and $q_h(t)\in\R^{N+2}$ contain the coefficients of $p(t,\cdot)$ and $q(t,\cdot)$ with respect to the corresponding finite element bases.
Furthermore, the coefficient matrices are given by $M_1=hI_{N+1}$,
\begin{alignat*}{2}
    M_2 &= \frac{h}6
    \begin{bmatrix}
        2       & 1         & 0         & \cdots    & 0         & 0\\
        1       & 4         & 1         & \ddots    & \vdots    & \vdots\\
        0       & 1         & 4         & \ddots    & 0         & 0\\
        \vdots  & \ddots    & \ddots    & \ddots    & 1         & 0\\
        0       & \cdots    &  0        & 1         & 4         & 1\\
        0       & \cdots    & 0         & 0         & 1         & 2
    \end{bmatrix}
    \in\R^{(N+2)\times(N+2)},\quad B_2 = 
    \begin{bmatrix}
        1       & 0\\
        0       & 0\\
        \vdots  & \vdots\\
        0       & 0\\
        0      & -1
    \end{bmatrix}
    \in\R^{(N+2)\times2},\quad\text{and}\\ 
    D &= 
    \begin{bmatrix}
        -1      & 1         & 0         & \cdots    & 0\\
        0       & -1        & 1         & \ddots    & \vdots\\
        \vdots  & \ddots    & \ddots    & \ddots    & 0\\
        0       & \cdots    & 0         & -1        & 1
    \end{bmatrix}
    \in\R^{(N+1)\times(N+2)}.
\end{alignat*}
By adding the output equation
\begin{equation}
    \label{eq:outputEq}
    y(t) = 
    \begin{bmatrix}
        0 & B_2^\top
    \end{bmatrix}
    \begin{bmatrix}
        p_h(t)\\
        q_h(t)
    \end{bmatrix}
    ,
\end{equation}
we observe that the system \eqref{eq:semidiscreteISO2} and the output equation \eqref{eq:outputEq} form a port-Hamiltonian system in generalized state space form as in \eqref{eq:pH_gen}.
We summarize this system as
\begin{subequations}
    \label{eq:ISO2pHE}
    \begin{alignat}{1}
        E\dot{z}(t) &= (\tilde{J}-\tilde{R})z(t)+\tilde{B}u(t) \label{eq:ISO2pHE_state}\\
        y(t)        &= \tilde{B}^\top z(t) \label{eq:ISO2pHE_output}
    \end{alignat}
\end{subequations}
with
\begin{equation*}
    z(t) = 
    \begin{bmatrix}
        p_h(t)\\
        q_h(t)
    \end{bmatrix}
    ,\quad E = 
    \begin{bmatrix}
        aM_1    & 0\\
        0       & bM_2
    \end{bmatrix}
    ,\quad \tilde{J} = 
    \begin{bmatrix}
        0       & -D\\
        D^\top  & 0
    \end{bmatrix}
    ,\quad \tilde{R}=
    \begin{bmatrix}
        0 & 0\\
        0 & dM_2
    \end{bmatrix}
    ,\quad \text{and}\quad \tilde{B} = 
    \begin{bmatrix}
        0\\
        B_2
    \end{bmatrix}
    .
\end{equation*}
Finally, we transform \eqref{eq:ISO2pHE} to a pH system of the form \eqref{eq:pH} in the following way.
We first compute the Cholesky decomposition $E=LL^\top$, transform the state via $x=L^\top z$ and multiply \eqref{eq:ISO2pHE_state} from the left by $L^{-1}$.
This leads to the matrices $Q=I_{2N+3}$, $J=L^{-1}\tilde{J}L^{-\top}$, $R=L^{-1}\tilde{R}L^{-\top}$, and $B=L^{-1}\tilde{B}$.
Especially, we obtain a sparse matrix $Q$, but in general dense matrices $J$ and $R$.
However, since the first diagonal block of $\tilde{R}$ is $0$ and the second one a multiple of $M_2$, the matrix $R$ is also block diagonal with the diagonal blocks $0$ and $\frac{d}{b}I_{N+2}$.
Thus, the matrix $R$ is sparse for the example at hand and, furthermore, since the first block of $B$ is $0$ and since the second diagonal block of $R$ is positive definite, there exists $c\in\R_{>0}$ such that $c R\succeq  \frac{1}{2}BB^\top$.
Hence, the assumptions of Corollary \ref{cor:bt} are satisfied and we can employ the corresponding error bound.

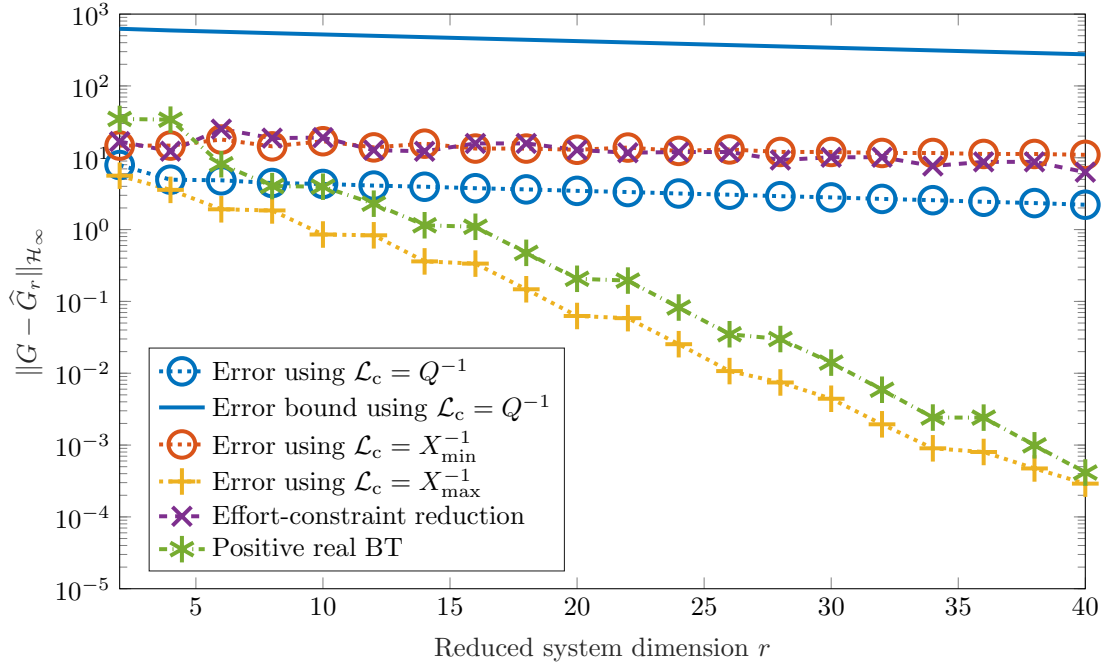
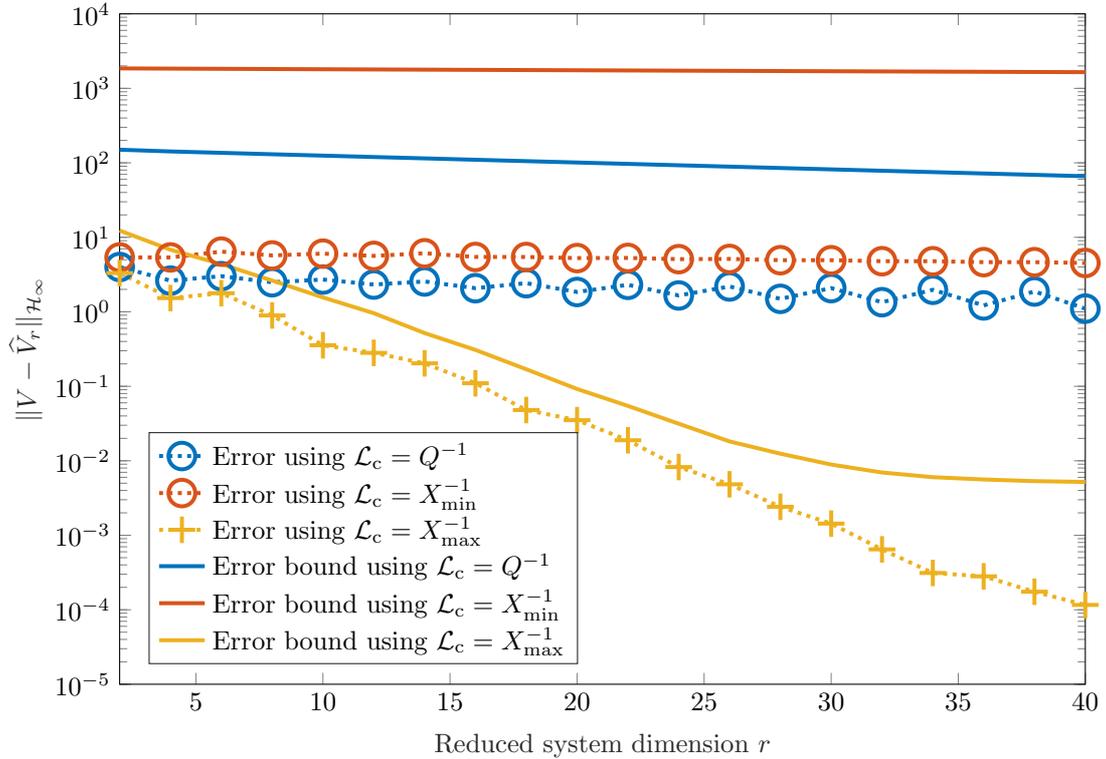
\begin{figure}[tb]
    \centering
    \begin{subfigure}{\textwidth}
        \centering
        % This file was created by matlab2tikz.
%
%The latest updates can be retrieved from
%  http://www.mathworks.com/matlabcentral/fileexchange/22022-matlab2tikz-matlab2tikz
%where you can also make suggestions and rate matlab2tikz.
%
\definecolor{mycolor1}{rgb}{0.00000,0.44700,0.74100}%
\definecolor{mycolor2}{rgb}{0.85000,0.32500,0.09800}%
\definecolor{mycolor3}{rgb}{0.92900,0.69400,0.12500}%
\definecolor{mycolor4}{rgb}{0.49400,0.18400,0.55600}%
\definecolor{mycolor5}{rgb}{0.46600,0.67400,0.18800}%
\begin{tikzpicture}

\begin{axis}[%
width=5in,
height=3in,
scale only axis,
xmin=2,
xmax=40,
xlabel style={font=\color{white!15!black}},
xlabel={Reduced system dimension $r$},
ymode=log,
ymin=1e-05,
ymax=1000,
yminorticks=true,
ylabel style={font=\color{white!15!black}},
ylabel={$\|G-\widehat{G}_r\|_{\mathcal{H}_\infty}$},
axis background/.style={fill=white},
legend style={legend cell align=left, align=left, draw=white!15!black},
legend pos = south west
]
\addplot [color=mycolor1, dotted, line width=1.5pt, mark size=5pt, mark=o, mark options={solid, mycolor1}]
  table[row sep=crcr]{%
2	7.91908518510869\\
4	4.95102107967538\\
6	4.85068060880459\\
8	4.45011918476263\\
10	4.31474994845069\\
12	4.09800368966969\\
14	3.94904183687742\\
16	3.76547689839647\\
18	3.62671130674196\\
20	3.45974258573428\\
22	3.3320707691467\\
24	3.17847913299868\\
26	3.05985635977225\\
28	2.91710278046002\\
30	2.80552677776053\\
32	2.67164116438372\\
34	2.56560883179675\\
36	2.43921897118442\\
38	2.33775268346682\\
40	2.21815236228168\\
};
\addlegendentry{Error using $\Lcalc=Q^{-1}$}

\addplot [color=mycolor1, line width=1.5pt]
  table[row sep=crcr]{%
2	624.975321028997\\
4	592.307630014546\\
6	567.337196854698\\
8	542.840452771878\\
10	520.399411443865\\
12	498.612017574509\\
14	477.923983161845\\
16	457.970210278942\\
18	438.962944218373\\
20	420.651267190256\\
22	403.202735153013\\
24	386.398486614299\\
26	370.384387155205\\
28	354.963319374947\\
30	340.266352230282\\
32	326.113608444916\\
34	312.624627836578\\
36	299.634377587531\\
38	287.252457639264\\
40	275.327455730114\\
};
\addlegendentry{Error bound using $\Lcalc=Q^{-1}$}

\addplot [color=mycolor2, dotted, line width=1.5pt, mark size=5pt, mark=o, mark options={solid, mycolor2}]
  table[row sep=crcr]{%
2	14.7595191055664\\
4	14.7596207226521\\
6	17.9557212252386\\
8	14.4147267054613\\
10	16.9834484201322\\
12	13.9411102136183\\
14	15.704117967602\\
16	13.4248887763945\\
18	13.4206942271843\\
20	12.9122239035467\\
22	13.8642372315901\\
24	12.4523519181954\\
26	13.0475416929085\\
28	12.0389932151687\\
30	12.0345939935005\\
32	11.6527921436984\\
34	11.6464358889808\\
36	11.2920773175831\\
38	11.2846856441085\\
40	10.9771499819456\\
};
\addlegendentry{Error using $\Lcalc=X_{\mathrm{min}}^{-1}$}

\addplot [color=mycolor3, dotted, line width=1.5pt, mark size=5pt, mark=+, mark options={solid, mycolor3}]
  table[row sep=crcr]{%
2	5.62892433371896\\
4	3.57235180221314\\
6	1.91599432608539\\
8	1.84660822402967\\
10	0.853753706211504\\
12	0.832592587197532\\
14	0.360641806368109\\
16	0.336011504074012\\
18	0.147547498076009\\
20	0.0626628799944512\\
22	0.0584511113950766\\
24	0.0253739669297533\\
26	0.0107149425363773\\
28	0.00743957605804905\\
30	0.00441467563487866\\
32	0.00194600813466954\\
34	0.000901286233355076\\
36	0.000801146677548693\\
38	0.000472650022337037\\
40	0.000290542093483625\\
};
\addlegendentry{Error using $\Lcalc=X_{\mathrm{max}}^{-1}$}

\addplot [color=mycolor4, dashed, line width=1.5pt, mark size=5pt, mark=x, mark options={solid, mycolor4}]
  table[row sep=crcr]{%
2	16.8137650699798\\
4	12.339951608528\\
6	25.0443605778266\\
8	18.7625903978802\\
10	19.0889135751963\\
12	12.7578429306125\\
14	12.3048287892099\\
16	15.8284790526371\\
18	15.828478939678\\
20	12.8354459978056\\
22	11.7456671407367\\
24	12.0123365395041\\
26	12.01233299042\\
28	9.29000721471662\\
30	10.1599550214504\\
32	10.1599885179486\\
34	7.6851553602578\\
36	8.76358902265702\\
38	8.76445990946556\\
40	6.29668945678695\\
};
\addlegendentry{Effort-constraint reduction}

\addplot [color=mycolor5, dashdotted, line width=1.5pt, mark size=5pt, mark=asterisk, mark options={solid, mycolor5}]
  table[row sep=crcr]{%
2	34.7101018532385\\
4	33.8913700362552\\
6	8.12603803641103\\
8	4.03288330593024\\
10	3.95479373054552\\
12	2.29356973749241\\
14	1.1484217506696\\
16	1.0937563874013\\
18	0.474457550705345\\
20	0.206548881458491\\
22	0.195124909809533\\
24	0.0822211127486992\\
26	0.0349080696030492\\
28	0.0299385130782396\\
30	0.0140594669192126\\
32	0.00589268767717854\\
34	0.00241605698808648\\
36	0.00241115135686612\\
38	0.000991598111011815\\
40	0.000414814580889607\\
};
\addlegendentry{Positive real BT}
\end{axis}
\end{tikzpicture}%
        \caption{Decay of the $\mathcal{H}_\infty$ error of the transfer function for the approach from Section \ref{sec:classic_bt} using three different choices for the Hamiltonian. The solid line represents the error bound from Corollary \ref{cor:bt} for the case of the canonical Hamiltonian. As a reference, we also added the error decay for effort-constrained reduction and for positive real balanced truncation.
        }
        \label{fig:dampedWave_errorDecayOfTransferFunction}
    \end{subfigure}
    \\[0.5cm]
    \begin{subfigure}{\textwidth}
        \centering
        % This file was created by matlab2tikz.
%
%The latest updates can be retrieved from
%  http://www.mathworks.com/matlabcentral/fileexchange/22022-matlab2tikz-matlab2tikz
%where you can also make suggestions and rate matlab2tikz.
%
\definecolor{mycolor1}{rgb}{0.00000,0.44700,0.74100}%
\definecolor{mycolor2}{rgb}{0.85000,0.32500,0.09800}%
\definecolor{mycolor3}{rgb}{0.92900,0.69400,0.12500}%
\definecolor{mycolor4}{rgb}{0.49400,0.18400,0.55600}%
\definecolor{mycolor5}{rgb}{0.46600,0.67400,0.18800}%
\begin{tikzpicture}

\begin{axis}[%
width=5in,
height=3.5in,
scale only axis,
xmin=2,
xmax=40,
xlabel style={font=\color{white!15!black}},
xlabel={Reduced system dimension $r$},
ymode=log,
ymin=1e-05,
ymax=10000,
yminorticks=true,
ylabel style={font=\color{white!15!black}},
ylabel={$\|V-\widehat{V}_r\|_{\mathcal{H}_\infty}$},
axis background/.style={fill=white},
legend style={legend cell align=left, align=left, draw=white!15!black},
legend pos = south west
]
\addplot [color=mycolor1, dotted, line width=1.5pt, mark size=5pt, mark=o, mark options={solid, mycolor1}]
  table[row sep=crcr]{%
2	3.99232094811875\\
4	2.62309803139656\\
6	3.00738148918052\\
8	2.49307838330481\\
10	2.70942260768707\\
12	2.32222660728256\\
14	2.55201125053701\\
16	2.07460906750284\\
18	2.42226042976531\\
20	1.84584782647664\\
22	2.30157426171567\\
24	1.65709479331616\\
26	2.18758609270383\\
28	1.49218722726675\\
30	2.07896366966879\\
32	1.34579180717998\\
34	1.97463005883579\\
36	1.21806382380752\\
38	1.87362841265963\\
40	1.10292110487751\\
};
\addlegendentry{Error using $\Lcalc=Q^{-1}$}

\addplot [color=mycolor2, dotted, line width=1.5pt, mark size=5pt, mark=o, mark options={solid, mycolor2}]
  table[row sep=crcr]{%
2	5.37200721277266\\
4	5.37016295115903\\
6	6.42283418047467\\
8	5.71185497486172\\
10	6.03759982969646\\
12	5.63241605824627\\
14	6.10727886760709\\
16	5.44970843563723\\
18	5.44602052342544\\
20	5.26185890657201\\
22	5.28700494539482\\
24	5.09151587634857\\
26	5.12609511065614\\
28	4.93080003337362\\
30	4.92678072591439\\
32	4.77929077868273\\
34	4.77457878040154\\
36	4.64156840775233\\
38	4.63732045270892\\
40	4.52243444994373\\
};
\addlegendentry{Error using $\Lcalc=X_{\mathrm{min}}^{-1}$}

\addplot [color=mycolor3, dotted, line width=1.5pt, mark size=5pt, mark=+, mark options={solid, mycolor3}]
  table[row sep=crcr]{%
2	3.30984994713346\\
4	1.52857250858461\\
6	1.77667619084854\\
8	0.893639360148377\\
10	0.355316036405685\\
12	0.281095947514207\\
14	0.20325769436431\\
16	0.109815308322963\\
18	0.047976621290696\\
20	0.0350790130719933\\
22	0.0188641112089043\\
24	0.00824296905768576\\
26	0.00483351497977412\\
28	0.00241233133533029\\
30	0.00143303273635619\\
32	0.00064655002684412\\
34	0.000311813518922537\\
36	0.000281147244273887\\
38	0.00017468041610004\\
40	0.000116075383709229\\
};
\addlegendentry{Error using $\Lcalc=X_{\mathrm{max}}^{-1}$}

\addplot [color=mycolor1, line width=1.5pt]
  table[row sep=crcr]{%
2	150.019838588515\\
4	142.178246179748\\
6	136.18431293778\\
8	130.304084599807\\
10	124.917309659247\\
12	119.687437052176\\
14	114.721456030411\\
16	109.931728042118\\
18	105.369200706286\\
20	100.973643410487\\
22	96.7852764914158\\
24	92.7515641694179\\
26	88.9075201965484\\
28	85.2058282714213\\
30	81.6779503463328\\
32	78.280708460414\\
34	75.0427970973626\\
36	71.9246015782532\\
38	68.9524304067636\\
40	66.0899383988664\\
};
\addlegendentry{Error bound using $\Lcalc=Q^{-1}$}

\addplot [color=mycolor2, line width=1.5pt]
  table[row sep=crcr]{%
2	1852.32248196414\\
4	1840.01969615587\\
6	1827.98118046361\\
8	1815.94369384528\\
10	1804.26562190726\\
12	1792.58973845584\\
14	1781.30343289244\\
16	1770.01963420853\\
18	1759.11535770135\\
20	1748.21405399671\\
22	1737.66318057904\\
24	1727.1166685789\\
26	1716.88607760096\\
28	1706.66017332043\\
30	1696.71737210924\\
32	1686.77848788016\\
34	1677.09291901844\\
36	1667.41158739941\\
38	1657.95688017607\\
40	1648.50680033432\\
};
\addlegendentry{Error bound using $\Lcalc=X_{\mathrm{min}}^{-1}$}

\addplot [color=mycolor3, line width=1.5pt]
  table[row sep=crcr]{%
2	12.330285745989\\
4	6.81266486934648\\
6	4.36135980427692\\
8	2.630780419518\\
10	1.55978197910598\\
12	0.958083867633112\\
14	0.518016021341949\\
16	0.307783497774376\\
18	0.169288194008117\\
20	0.092017109836423\\
22	0.0545580611061145\\
24	0.0314508303834002\\
26	0.0181949996723522\\
28	0.0124543896650309\\
30	0.00890402526844439\\
32	0.00696101999266514\\
34	0.00602142578124864\\
36	0.0056106524260298\\
38	0.00533080049544021\\
40	0.00520018426095097\\
};
\addlegendentry{Error bound using $\Lcalc=X_{\mathrm{max}}^{-1}$}

\end{axis}
\end{tikzpicture}%
        \caption{Decay of the $\mathcal{H}_\infty$ error of the spectral factors for the approach from Section \ref{sec:classic_bt} using three different choices for the Hamiltonian. The solid lines represent the respective error bounds as presented in Corollary \ref{cor:spec_bt}.}
        \label{fig:dampedWave_errorDecayOfSpectralFactors}
    \end{subfigure}
    \caption{Damped wave equation -- Error decay for the balancing-based model reduction approach introduced in Section \ref{sec:classic_bt}.}
    \label{fig:dampedWave_errorDecay}
\end{figure} 

For the numerical experiments we set $a=1$, $b=1$, $d=50$, and $\ell=1$.
Furthermore, the number of internal grid points is chosen as $N=500$ which results in a state space dimension of $1003$.
The resulting full-order model is used for illustrating the findings of Section \ref{sec:classic_bt}.
In particular, we investigate the influence of choosing different Hamiltonians, as mentioned in the paragraph after Remark \ref{rem:compare_spec_fac}, and we compare the errors with the proposed error bounds.
In particular, in the case of the canonical Hamiltonian function $\frac12x^\top Qx$, the constant $c$ from Corollary \ref{cor:bt} can be calculated by computing the largest eigenvalue of $\frac12BB^\top$ and dividing it by $\frac{d}{b}$.
Here, we exploited again that the second diagonal block of $R$ is a multiple of the identity matrix.
However, this reasoning does not apply when we change the Hamiltonian, and thus $R$, as discussed in subsection \ref{subsec:choosing_Q}.
In this case, the assumptions of Corollary \ref{cor:bt} may be violated and this is in fact what we observe numerically for the example at hand.
Thus, there is no error bound available for $\|G-\widehat{G}_r\|_{\mathcal{H}_\infty}$ in the cases where we replace $Q$ by $X_{\mathrm{max}}$ or $X_{\mathrm{min}}$ in the Hamiltonian.

In Figure \ref{fig:dampedWave_errorDecay} we depict the errors in the transfer function and in the spectral factors of the Popov function over the dimension of the reduced-order models.
When comparing the errors with the respective error bounds, we observe that there is a good agreement between their qualitative behaviors.
Furthermore, we find in Figure \ref{fig:dampedWave_errorDecayOfSpectralFactors} that the choice of the Hamiltonian plays an important role for the error bound as predicted by the theory.
Since the decay of the actual error resembles the decay of the corresponding error bound in this example, the choice of the Hamiltonian can also be observed to have a significant influence on the actual errors.

In addition to the approach introduced in Section \ref{sec:classic_bt}, we also show the corresponding $\mathcal{H}_\infty$ error decays for the effort-constraint reduction method and for positive real balanced truncation (positive real BT) in Figure \ref{fig:dampedWave_errorDecayOfTransferFunction}.
We choose them as reference methods since they are the schemes most related to our new approach in the sense that they are balanced-based model reduction techniques which preserve the port-Hamiltonian structure.
For the example at hand, we observe that the error decay of positive real balanced truncation is similar to the one by our approach when using the Hamiltonian corresponding to $X_{\mathrm{max}}$.
On the other hand, the effort-constraint reduction yields reduced-order models with larger errors which are of the same order of magnitude as the ones obtained by our approach using the Hamiltonian based on $X_{\mathrm{min}}$.

\section{Conclusion}

In this paper we propose new balancing-based methods for controller design and for model order reduction which preserve the structure of linear time-invariant port-Hamiltonian systems without algebraic constraints.
To this end, we first derive a modified LQG balancing approach which ensures that the resulting LQG controller is port-Hamiltonian, by properly choosing the weighting matrices in the algebraic Riccati equations.
Based on this balancing, we introduce a corresponding model reduction method to obtain a low-dimensional port-Hamiltonian controller.
In this context, we also present an a priori error bound in the gap metric, which is structurally the same as the one for standard LQG balanced truncation.
Moreover, we show that the error bound can be improved by replacing the canonical Hamiltonian by one which is based on the maximal solution of the associated KYP linear matrix inequality.

In addition to the modified LQG balancing approach, we also derive a modification of standard balanced truncation in order to ensure preservation of the port-Hamiltonian structure.
For this method, we derive an error bound w.r.t.\@ the spectral factors of the associated Popov function.
For the case that a certain condition on the dissipation port and the input port of the full-order model is satisfied, we also present an a priori bound for the $\mathcal{H}_\infty$ error of the transfer function.
Furthermore, we give an intuitive interpretation of the mentioned condition by showing its relation to the existence of a structure-preserving output feedback.
Finally, the new methods and the corresponding error bounds are illustrated by means of two numerical examples: A mass-spring-damper system and a semi-discretized damped wave equation.

Based on the findings of this paper, an interesting next step is the generalization to port-Hamiltonian differential-algebraic equation systems.
This would allow to also construct low-dimensional port-Hamiltonian controllers for systems with algebraic constraints.
Furthermore, we mainly focus on the theory in this paper, whereas an efficient and robust numerical implementation is not addressed.
These practical considerations are certainly an interesting research direction for the future in order to explore the applicability and competitiveness of the new approaches.

\subsection*{Acknowledgements}

We would like to thank Volker Mehrmann for comments on a previous version of this manuscript. We thank the Deutsche Forschungsgemeinschaft for their support within the project B03 in the Sonderforschungsbereich/Transregio 154 ``Mathematical Modelling, Simulation and Optimization using the Example of Gas Networks". 
Finally, we express our gratitude to the two anonymous reviewers for their comments and suggestions which helped to improve and clarify this manuscript.

\appendix

\section{$Q$-conjugated LQG (reduced) control design}\label{apdxA}

Our choice of the weighting matrices $\tilde{\mathcal{R}}$, $\mathcal{R}_{\mathrm{f}}$, $\tilde{\mathcal{Q}}$ and $\mathcal{Q}_{\mathrm{f}}$ leading to \eqref{eq:lqg_care_ph} and \eqref{eq:lqg_fare_ph} is heavily inspired by \cite{Wuetal18} where the authors propose a different set of matrices to enforce the resulting controller to be port-Hamiltonian. 
In this appendix, we review this method and provide two examples which show that the controller obtained with this approach may generally not be realized as a port-Hamiltonian system. Concerning the reduced-order model obtained by the associated (modified) LQG balanced truncation method, we provide a connection to the new approach from Section \ref{sec:pH-LQG}. 
In particular, one implication of our discussion is that the reduced models from Algorithm \ref{alg:new} and \ref{alg:Wu} share the same error bound.

Let us begin by recalling the precise result concerning port-Hamiltonian control design in Theorem \ref{thm:Wu} and the corresponding model/controller reduction method in Algorithm \ref{alg:Wu}.
 
\begin{theorem}\cite[Theorem 7]{Wuetal18}\label{thm:Wu}
Denote the LQG Gramians $\Pf$, solution of the filter Riccati equation \eqref{eq:lqg_fare} and $\Pc$, solution of the control Riccati equation \eqref{eq:lqg_care}. Consider the LQG problem with the following relation between the covariance matrix $\Rf$ and the weighting matrix $\tilde{\mathcal R}=\Rf$ and with the following relation between the covariance matrix $\Qf$ and the weighting matrix $\tilde{\mathcal Q}$:
\begin{align}\label{eq:Wu_weight_rel}
\Qf = Q^{-1}(2QJ^\top \Pc + 2\Pc JQ + \tilde{\mathcal Q} )Q^{-1}.
\end{align}
In this case the LQG Gramians satisfy the following relation: 
\begin{align}\label{eq:Wu_Gram_rel}
\Pc Q^{-1}  = Q \Pf .
\end{align}
Furthermore, assuming that the port-Hamiltonian system is stable, the control Riccati equation \eqref{eq:lqg_care} and the filter Riccati equation \eqref{eq:lqg_fare} admit a unique solution, the LQG controller is passive and the closed loop system can be written as the feedback interconnection of the port-Hamiltonian system \eqref{eq:pH} with the port-Hamiltonian realization of the LQG controller.
\end{theorem}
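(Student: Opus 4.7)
The plan is to verify the three claims in order: first the Gramian relation \eqref{eq:Wu_Gram_rel}, then existence/uniqueness, and finally passivity together with the closed-loop pH interpretation.

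First I would prove the relation $\Pc Q^{-1}=Q\Pf$ by substituting the ansatz $\Pf=Q^{-1}\Pc Q^{-1}$ into the filter Riccati equation \eqref{eq:lqg_fare} and showing that it reduces to the control Riccati equation \eqref{eq:lqg_care} (with $\tilde{\mathcal R}=\Rf$), after which uniqueness of the stabilizing solution would finish this step. The relevant identities are the pH ones, $A=(J-R)Q$, $C=B^\top Q$, and $J^\top=-J$, which give $AQ^{-1}=J-R$, $Q^{-1}A^\top=-J-R$, and $CQ^{-1}=B^\top$. Plugging these into the filter equation and multiplying by $Q$ on both sides turns the quadratic term $\Pf C^\top\Rf^{-1}C\Pf$ into $\Pc B\Rf^{-1}B^\top\Pc$ and the linear term into $QJ\Pc-QR\Pc-\Pc JQ-\Pc RQ$. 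The prescribed form of $\Qf$ in \eqref{eq:Wu_weight_rel} contributes exactly $-2QJ\Pc+2\Pc JQ+\tilde{\mathcal Q}$ after the conjugation by $Q$, and the cross terms telescope so that the remainder coincides with the control Riccati equation. Hence $Q^{-1}\Pc Q^{-1}$ solves \eqref{eq:lqg_fare}, and under the stabilizability/detectability that is inherited from minimality of the pH system, uniqueness of the stabilizing solution yields \eqref{eq:Wu_Gram_rel}.

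For existence and uniqueness, I would invoke the standard LQR/Kalman theory: minimality of $(A,B,C)$ plus positivity of the weighting matrices guarantees that \eqref{eq:lqg_care} admits a unique stabilizing solution $\Pc\succeq 0$; the conjugation $\Pf=Q^{-1}\Pc Q^{-1}$ then furnishes the (unique) stabilizing solution of \eqref{eq:lqg_fare} via the relation just established. Stability of $A$ is used here only to ensure that $\tilde{\mathcal Q}$ can be chosen so that the implicit positivity requirement on $\Qf$ in \eqref{eq:Wu_weight_rel} is meaningful.

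The bulk of the work is the passivity/pH-realization claim for the controller. Here I would attempt to exhibit a Hamiltonian matrix $\Qhatc$ such that the KYP-LMI for $(\Ac,\Bc,\Cc)$ from \eqref{eq:LQG_controllers} holds. Using \eqref{eq:Wu_Gram_rel} one computes $\Bc^\top Q=\Rf^{-1}C\Pf Q=\Rf^{-1}B^\top Q\Pf Q=\Rf^{-1}B^\top\Pc=\Cc$, which suggests taking $\Qhatc=Q$ as the controller Hamiltonian matrix (i.e.\ collocation holds with respect to $Q$). It then remains to check the dissipation inequality $\Ac^\top Q+Q\Ac\preceq 0$. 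A direct expansion, using $QB=C^\top$ and $Q\Pf=\Pc Q^{-1}$, turns the sum into $-2QRQ-2(\Pc B\Rf^{-1}B^\top Q+QB\Rf^{-1}B^\top\Pc)$, and the task is to recognize or complete this to a manifestly negative semidefinite expression (e.g.\ as minus a square). This is the step I expect to be the main obstacle, and indeed the current paper notes in the introduction to the appendix that this conclusion of \cite{Wuetal18} does not hold in full generality, so one should expect to need additional structural assumptions to carry the argument through cleanly.

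Once passivity of the controller is secured, the closed-loop pH claim is immediate from the power-conserving interconnection $u=-\yc$, $\uc=y$: both $(J,R,Q,B)$ and the pH realization of the controller are pH, and the interconnection preserves the skew-symmetric/dissipative splitting of the aggregated system matrix with the sum Hamiltonian $\tfrac12 x^\top Qx+\tfrac12\xc^\top\Qhatc\xc$, as recalled in Section~\ref{sec:pH-LQG}.
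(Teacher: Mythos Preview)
The paper does not prove this theorem at all: it is quoted verbatim from \cite{Wuetal18} and then \emph{refuted}. The appendix explicitly disputes the passivity claim, explaining that the original argument (``the LQG controller yields a stable closed loop, hence $\Rc\succ 0$'') is a non sequitur, and then supplies two concrete counterexamples (Examples~\ref{ex:unstable} and~\ref{ex:not_ph}) in which the controller $(\Ac,\Bc,\Cc)$ is, respectively, unstable and not port-Hamiltonian. So there is no ``paper's own proof'' to compare against; the relevant content is a disproof.

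Your treatment of the Gramian relation \eqref{eq:Wu_Gram_rel} is fine and matches how the paper uses $\Pf=Q^{-1}\Pc Q^{-1}$ throughout the appendix (e.g.\ in step~1 of Algorithm~\ref{alg:Wu}). Your identification of the weak point is also correct, but you stop short of the right conclusion. The expansion you obtain,
\[
\Ac^\top Q+Q\Ac=-2QRQ-2\bigl(\Pc B\Rf^{-1}B^\top Q+QB\Rf^{-1}B^\top\Pc\bigr),
\]
cannot in general be completed to a negative semidefinite square: the bracketed term is a symmetrized product $M+M^\top$ with $M=\Pc B\Rf^{-1}B^\top Q$, which is indefinite, and it may well dominate $-2QRQ$. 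This is precisely what the paper's counterexamples exhibit. Hence the ``passivity and pH realization'' portion of the statement is not provable without extra hypotheses, and your plan to ``recognize or complete this to a manifestly negative semidefinite expression'' will fail on those examples. The correct takeaway is not that one needs additional structural assumptions to carry the argument through, but that the theorem as stated is false.
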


\begin{algorithm}[H]
    \caption{$Q$-conjugated LQG reduced controller design (\cite{Wuetal18})} 
    \label{alg:Wu}
    \begin{algorithmic}[1]
      \REQUIRE $J, R,Q,\tilde{\mathcal Q}\in \mathbb R^{n\times n}, B\in \mathbb R^{n\times m}, \tilde{\mathcal R}\in \mathbb R^{m\times m}$
      \ENSURE Reduced-order LQG controller $(\Acr,\Bcr,\Ccr)$ \\
      \STATE Choose $\Rf=\tilde{\mathcal R}$ and $\Qf$ as in \eqref{eq:Wu_weight_rel} and compute $\Pc$ and $\Pf=Q^{-1}\Pc Q^{-1}$ solving \eqref{eq:lqg_care} and \eqref{eq:lqg_fare}.\!\!\!
      \STATE Compute  $T\in \mathbb R^{n\times n}$ defined by $T\Pf T^\top = T^{-\top} \Pc T^{-1} = \mathrm{diag}(\sigma_1,\dots,\sigma_n)$.
      \STATE Balance the system $\Jb=TJT^\top$, $\Rb = TRT^\top$, $\Qb =T^{-\top } Q T^{-1} $, and $\Bb=TB$.
      \STATE Proceed to the reduction by using the effort-constraint method accordingly to \eqref{eq:eff_const}. 
      \STATE Compute  $(\Acr,\Bcr,\Ccr)$ for $(A_r,B_r,C_r)$ based on \eqref{eq:LQG_controllers} and Theorem~\ref{thm:Wu}.
      \end{algorithmic}
\end{algorithm} 
 
 The main idea in \cite{Wuetal18} is to exploit relation \eqref{eq:Wu_Gram_rel} such that the controller \eqref{eq:controller} with $(\Ac,\Bc,\Cc)$ as in \eqref{eq:LQG_controllers} is characterized by the same quadratic Hamiltonian function $\mathcal{H}(x)=\frac{1}{2}x^\top Qx$ that defines the original system \eqref{eq:pH}. In more detail, note that 
 \begin{align*}
     \Ac &= A-B\tilde{\mathcal R}^{-1}B^\top \Pc-\Pf C^\top \Rf^{-1} C \\
     &=A-B\tilde{\mathcal R}^{-1}B^\top Q\Pf Q-\Pf QB\tilde{\mathcal R}^{-1} B^\top Q \\
     &=(J-\underbrace{(R+B\tilde{\mathcal R}^{-1}B^\top Q\Pf+\Pf QB\tilde{\mathcal R}^{-1} B^\top)}_{=:\Rc} )Q , \\[1ex]
     \Cc &= \tilde{\mathcal R}^{-1}B^\top  \Pc = \Rf^{-1} B^\top Q \Pf Q = \Bc^\top Q.
 \end{align*}
 In \cite{Wuetal18}, the authors claim that $\Rc$ is symmetric positive definite, rendering the controller port-Hamiltonian. The argument provided is that an LQG controller yields a stable closed loop system, implying $\Rc=\Rc^\top \succ 0$. Let us emphasize that while the LQG controller produces a stable closed loop system, the controller itself does not generally have to be stable, see \cite{Joh79}. Additionally, consider the following example 
 \begin{align*}
     A=\begin{bmatrix} 1 & 2 \\ -2 & -2 \end{bmatrix} = \begin{bmatrix} 0 & 2 \\ -2 & 0 \end{bmatrix} - \begin{bmatrix} -1 & 0 \\ 0 & 2 \end{bmatrix}
 \end{align*}
which shows that asymptotic stability of $A=J-R$ does not (automatically) yield a positive definite $R$ but might require a change of the Hamiltonian matrix $Q=Q^\top \succ 0$. In the following we provide two examples showing that the statement of the above theorem is generally false, even for the particular choice $\tilde{\mathcal Q}=C^\top C$.

\begin{example}[$\Ac$ unstable for general $\tilde{\mathcal Q}$]\label{ex:unstable}
Consider a port-Hamiltonian system \eqref{eq:pH} where 
\begin{align*}
    J=\begin{bmatrix} 0 & 0 \\ 0 & 0 \end{bmatrix},\ \ R=\begin{bmatrix} 1 & 0 \\ 0 & 1 \end{bmatrix}, \ \ Q=\begin{bmatrix} 2 & 1 \\ 1 & 1 \end{bmatrix}, \ \ B=\begin{bmatrix} 1 & 0 \\ 2 & 1 \end{bmatrix}.
\end{align*}
Let the weighting matrices for the control and filter Riccati equation \eqref{eq:lqg_care} and \eqref{eq:lqg_fare} be as follows 
\begin{align*}
    \tilde{\mathcal R} = \begin{bmatrix} 1 & 0 \\ 0 & 1 \end{bmatrix}=\Rf,\ \ \tilde{\mathcal Q} = \begin{bmatrix} 5 & 4 \\ 4 & 7 \end{bmatrix}.
\end{align*}
Note that  
\begin{align*}
    A^\top + A - BB^\top + \tilde{\mathcal Q} = \begin{bmatrix} -4 & -2 \\ -2 & -2 \end{bmatrix} - \begin{bmatrix} 1 & 2 \\ 2 & 5 \end{bmatrix} + \begin{bmatrix} 5 & 4 \\ 4 & 7 \end{bmatrix} = \begin{bmatrix} 0 & 0 \\ 0 & 0 \end{bmatrix}.
\end{align*}
This implies that $\Pc=I_2$ leading to
\begin{align*}
    \Qf = Q^{-1}(2QJ^\top \Pc + 2\Pc JQ + \tilde{\mathcal Q} )Q^{-1}=\begin{bmatrix} 4 & -7 \\ -7 & 17 \end{bmatrix}.
\end{align*}
One easily verifies that $\Pf=Q^{-1} \Pc Q^{-1}=\left[\begin{smallmatrix} 2 & -3 \\ -3 & 5 \end{smallmatrix}\right]$ as well as 
\begin{align*}
    A \begin{bmatrix} 2 & -3 \\ -3 & 5 \end{bmatrix} + \begin{bmatrix} 2 & -3 \\ -3 & 5 \end{bmatrix} A^\top - \begin{bmatrix} 2 & -3 \\ -3 & 5 \end{bmatrix} Q B B^\top Q \begin{bmatrix} 2 & -3 \\ -3 & 5 \end{bmatrix} + \Qf = 0.
\end{align*}
For the resulting controller, we obtain 
\begin{align*}
    \Ac = A-BB^\top \Pc - \Pf QBB^\top Q = \begin{bmatrix} 2 & 1 \\ -17 & -17 \end{bmatrix}
\end{align*}
which is unstable since $\lambda_1(A_c)\approx 1.0586$ and $\lambda_2(A_c)\approx -16.0586$, contradicting the assertion from Theorem \ref{thm:Wu}.
\end{example}

\begin{example}[$(\Ac,\Bc,\Cc)$ not pH for $\tilde{\mathcal Q}=C^\top C$]\label{ex:not_ph}
  Since we obtained the example by numerical tests, below we only present the first five relevant digits of the matrices. In particular, consider a pH system \eqref{eq:pH} where 
  \begin{align*}
    J&\approx \begin{bmatrix} 0 & 4.1002 & 0.5925 \\ -4.1002 & 0 & -1.9806 \\ -0.5925 & 1.9806 & 0   \end{bmatrix},\ \ R\approx \begin{bmatrix} 1.2267 &-1.6531 & 0.2866 \\ -1.6531 & 3.7295 & -0.9714 \\
    0.2866 & -0.9714 & 0.2996 \end{bmatrix}, \\[2ex] 
    Q &= I_3, \ \  B=C^\top \approx \begin{bmatrix} 1.3703 & 0.8682 & 2.1628 \\ -1.2120 & -0.8492 & -1.9551 \\ 0.1859 & -0.2009  &-0.1078 \end{bmatrix}.
\end{align*}
Choosing $\tilde{\mathcal R}=I_3$ and $\tilde{\mathcal Q}=C^\top C$, the previous system is already \emph{balanced} in the sense that 
\begin{align*}
    \Pc=\Pf \approx \begin{bmatrix}  0.8462 & 0 & 0 \\ 0 & 0.5565 & 0 \\ 0 & 0 & 0.1416 \end{bmatrix}.
\end{align*}
For the controller $(\Ac,\Bc,\Cc)$ it now holds that 
\begin{align*}
    \Ac &= A-BB^\top \Pc - \Pf C^\top C = A-BB^\top \Pc -\Pc BB^\top \approx \begin{bmatrix}
    -13.5962 &  15.0479 &   0.4569 \\
    6.8475 & -10.4210 &  -1.1181 \\
   -0.7281  &  2.8430 &  -0.3241
    \end{bmatrix}, \\[2ex]
    \Bc &= \Pf C^\top \Rf^{-1} = \Pc B \tilde{\mathcal R}^{-1}=\Cc^\top \approx \begin{bmatrix} 
    1.1595 &  0.7346 &  1.8301 \\
   -0.6744 & -0.4725 & -1.0879 \\
    0.0263 & -0.0284 & -0.0153
    \end{bmatrix}.
\end{align*}
As mentioned before, port-Hamiltonian systems can be characterized via the KYP-LMI \eqref{eq:kyp-lmi}. Since here we are interested in $(\Ac,\Bc,\Cc)$ being port-Hamiltonian, we have to find $X=X^\top \succeq0 $ such that 
\begin{align*}
\left[
\begin{array}{cc}
-X\,\Ac - \Ac^{{\top}}X & \Cc^{{\top}} - X\,\Bc \\
\Cc- \Bc^{{\top}}X & 0
\end{array}
\right]\succeq 0.
\end{align*}
Since $\Bc=\Cc^\top \in \mathbb R^{3\times 3}$ and $\Bc$ is regular, it has to hold that $X=I_3.$ However, this implies
\begin{align*}
-\Ac-\Ac^\top = 
\begin{bmatrix}    
    27.1922 & -21.895 & 0.2716 \\
    -21.895 & 20.8416 & -1.7253 \\
    0.2716  & -1.7253 & 0.6481
    \end{bmatrix} \not \succeq 0 
\end{align*}
since the smallest eigenvalue of $-\Ac-\Ac^\top$ is $\lambda_{\mathrm{min}}(-\Ac-\Ac^\top)\approx -0.0445<0.$
\end{example} 
  
Interestingly, similar to the result from Theorem \ref{thm:rom_ph}, balancing a pH system according to Algorithm \ref{alg:Wu} leads to a diagonal form of $\Qb$ in \eqref{eq:part}.
 
\begin{lemma}\label{lem:eff_const_is_trunc}
	Let a minimal port-Hamiltonian system \eqref{eq:pH} be given by $(J,R,Q,B)$ and let $\Rf=\tilde{\mathcal R}=I_m$, $\tilde{\mathcal Q}=C^\top C$ and $\Qf$ be given as in \eqref{eq:Wu_weight_rel}.
    If $\Ab=(\Jb-\Rb)\Qb$, $\Bb$, and $\Cb=\Bb^\top \Qb$ are the system matrices of the corresponding system which is balanced w.r.t.\@ $\Pf$ and $\Pc$ as in \eqref{eq:lqg_care} and \eqref{eq:lqg_fare}, then $\Qb=I_n.$ 
In particular, it holds that $Q_r=\QbUpperLeft$ and, consequently, $(A_r,B_r,C_r)$ is obtained by truncation of $(\Ab,\Bb,\Cb)$. 
\end{lemma}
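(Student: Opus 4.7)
The plan is to exploit the Gramian relation $\Pc Q^{-1}=Q\Pf$ from Theorem~\ref{thm:Wu}, which applies here because the weighting matrices $\tilde{\mathcal{R}}=\Rf=I_m$, $\tilde{\mathcal{Q}}=C^\top C$ and the prescribed $\Qf$ are exactly of the form \eqref{eq:Wu_weight_rel}, and to translate this relation into the balanced coordinates in order to pin down $\Qb$.

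First I would track how the three matrices $\Pc$, $\Pf$ and $Q$ behave under the balancing transformation $T$. By definition of balancing, $T^{-\top}\Pc T^{-1}=\Sigma=T\Pf T^\top$, while $Q$ transforms as in Section~\ref{subsec:pH} to $\Qb=T^{-\top}QT^{-1}$, so that $Q^{-1}$ becomes $\Qb^{-1}=TQ^{-1}T^\top$. Sandwiching $\Pc Q^{-1}=Q\Pf$ between $T^{-\top}$ on the left and $T^\top$ on the right then collapses to $\Sigma\Qb^{-1}=\Qb\Sigma$, or equivalently
\begin{equation*}
\Qb\,\Sigma\,\Qb=\Sigma.
\end{equation*}

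The main obstacle is to deduce $\Qb=I_n$ from this single algebraic constraint, together with the a priori information $\Qb=\Qb^\top\succ 0$ and $\Sigma\succ 0$ diagonal. My plan is to introduce $P:=\Sigma^{1/2}\Qb\Sigma^{-1/2}$ and exploit two complementary properties. A direct computation using $\Qb\Sigma\Qb=\Sigma$ shows $P^\top P=\Sigma^{-1/2}\Qb\Sigma\Qb\Sigma^{-1/2}=I_n$, so $P$ is orthogonal; on the other hand, $P$ is similar to the symmetric positive definite matrix $\Qb$ via $\Sigma^{1/2}$, hence it is diagonalizable with the same real, strictly positive spectrum as $\Qb$. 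Real positive eigenvalues lying on the unit circle can only be equal to $1$, so $P=I_n$ and consequently $\Qb=I_n$.

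Once $\Qb=I_n$ is established, the remaining assertions follow immediately: the Schur complement in \eqref{eq:eff_const} degenerates to $Q_r=\QbUpperLeft-\QbUpperRight\QbLowerRight^{-1}\QbLowerLeft=I_r=\QbUpperLeft$, and the balanced system satisfies $\Ab=\Jb-\Rb$ and $\Cb=\Bb^\top$. Truncating the leading $r\times r$ principal block of $\Ab$ (respectively the first $r$ columns of $\Cb$) therefore produces exactly $(J_{11}-R_{11},\,B_1,\,B_1^\top)=(A_r,B_r,C_r)$, which concludes the proof.
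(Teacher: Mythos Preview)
Your proposal is correct and follows essentially the same route as the paper. You derive $\Qb\Sigma\Qb=\Sigma$ in balanced coordinates, conjugate by $\Sigma^{1/2}$ to obtain an orthogonal matrix $P=\Sigma^{1/2}\Qb\Sigma^{-1/2}$, and conclude $P=I_n$ from the fact that $P$ shares the positive real spectrum of $\Qb$; the paper does the identical computation with $\Qb^{-1}$ in place of $\Qb$ (its auxiliary matrix $\Upsilon^{-1/2}\Qb^{-1}\Upsilon^{1/2}$ is precisely your $P^{-1}$), and likewise notes that minimality ensures $\Sigma\succ 0$.
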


\begin{proof}
 For a system balanced w.r.t.\@ \eqref{eq:lqg_care} and \eqref{eq:lqg_fare}, with weighting matrices as in Theorem \ref{thm:Wu}, we have that 
 \begin{align*}
     \Upsilon=\mathrm{diag}(\upsilon_1,\dots,\upsilon_n)=\Phatf=\Qb^{-1} \Phatc \Qb^{-1}=\Qb^{-1}\Upsilon \Qb^{-1}.
 \end{align*}
 Since we assumed $(\Ab,\Bb,\Cb)$ to be a minimal realization, the pair $(\Ab,\Cb)$ is observable implying that the solution $\Pc$ to the classical control Riccati equation is positive definite. Hence, we conclude that $\upsilon_i>0,i=1,\dots,n$ as well as 
 \begin{align*}
 I_n=\Upsilon^{-\frac{1}{2}}(\Qb^{-1} \Upsilon \Qb^{-1}) \Upsilon^{-\frac{1}{2}}=(\Upsilon^{-\frac{1}{2}}\Qb^{-1} \Upsilon^{\frac{1}{2}}) (\Upsilon^{\frac{1}{2}} \Qb^{-1} \Upsilon^{-\frac{1}{2}}) = (\Upsilon^{-\frac{1}{2}}\Qb^{-1} \Upsilon^{\frac{1}{2}}) (\Upsilon^{-\frac{1}{2}}\Qb^{-1} \Upsilon^{\frac{1}{2}})^\top.
 \end{align*}
 This shows that $\Upsilon^{-\frac{1}{2}}\Qb^{-1} \Upsilon^{\frac{1}{2}}$ is orthogonal. Since $\Qb^{-1}=(\Qb^{-1})^\top \succ 0$, for its eigenvalues $\lambda_j$ we obtain that 
 \begin{align*}
 \lambda_j(\Upsilon^{-\frac{1}{2}} \Qb^{-1} \Upsilon^{\frac{1}{2}})=\lambda_j (\Qb^{-1})\in \mathbb R_{>0} .
 \end{align*}
 Using the orthogonality of $\Upsilon^{-\frac{1}{2}}\Qb^{-1} \Upsilon^{\frac{1}{2}}$, we conclude that $\lambda_i(\Upsilon^{-\frac{1}{2}}\Qb^{-1} \Upsilon^{\frac{1}{2}})=1$ for $i=1,\dots,n$. In other words, $\Upsilon^{-\frac{1}{2}}\Qb^{-1} \Upsilon^{\frac{1}{2}}=I_n$ which also implies that $\Qb=I_n$. The remaining assertions immediately follow from the definition of $Q_r$.
 \end{proof}
 
 Let us emphasize that Lemma \ref{lem:eff_const_is_trunc} implies that balanced truncation w.r.t.\@ the Gramians from \cite{Wuetal18} automatically preserves the pH structure of $(A,B,C)$ within the reduced-order model $(A_r,B_r,C_r).$ This is a consequence of the approach implicitly utilizing the effort-constraint reduction framework. In view of Example \ref{ex:unstable} and Example \ref{ex:not_ph}, a reduced LQG controller $(\Acr,\Bcr,\Ccr)$ will however generally be not pH. 
 
With the intention of showing that the reduced-order models produced by Algorithm \ref{alg:new} and \ref{alg:Wu} are state space equivalent, let us note that the balancing matrix $T$ in Algorithm \ref{alg:Wu} can be specified explicitly as
\begin{align*}
T=\Upsilon^{-\frac{1}{2}} Z^\top \Lc, \text{ where } \ \Lc^\top \Lc=\Pc, \quad \Lf^\top \Lf = \Pf, \quad \underbrace{\begin{bmatrix}U_1 & U_2 \end{bmatrix}}_{=U}\underbrace{\begin{bmatrix}\Upsilon_1 & 0 \\ 0 & \Upsilon_2 \end{bmatrix}}_{=\Upsilon} \underbrace{\begin{bmatrix} Z_1^\top \\ Z_2 ^\top \end{bmatrix}}_{=Z^\top} = \Lf\Lc^\top .
\end{align*}
Moreover, we also have that $T^{-1}=\Lf^\top U\Upsilon^{-\frac{1}{2}}.$ From Lemma \ref{lem:eff_const_is_trunc}, we already know that $\Qb=I_n$ and, hence, the reduced model from Algorithm \ref{alg:Wu} is obtained from a regular Petrov--Galerkin projection $\mathbb{P}=VW^\top$ of the form
\begin{equation}\label{eq:rom_Wu_proj}
 \begin{alignedat}{3}
    A_r &= W^\top AV, \quad& B_r   &= W^\top B,  \quad& C_r   &= CV, \\
  J_{r} &= W^\top JW, \quad& R_{r} &= W^\top RW, \quad& Q_{r} &= I_r, %\\
  %V&=\Lf^\top U_1 \Sigma_1^{-\frac{1}{2}}, \quad W^\top=\Sigma_1^{-\frac{1}{2}}Z_1^\top \Lc,
 \end{alignedat}
\end{equation}
with $V=T^{-1}\Vb=\Lf^\top U_1\Upsilon_1^{-\frac{1}{2}}$ and $W^\top=\Wb^\top T=\Upsilon_1^{-\frac{1}{2}}Z_1^\top \Lc$, as in \autoref{sec:preliminaries}.

The next result shows that the reduced model $(A_r,B_r,C_r)$ is state space equivalent to a reduced model that is obtained by truncation of a system balanced w.r.t.\@ the Gramians $\Phatc$ and $\Phatf$, i.e., a model which is computed by Algorithm \ref{alg:new}.
\begin{lemma}
    Let a minimal port-Hamiltonian system \eqref{eq:pH} be given by $(J,R,Q,B)$ and
    let $\Rf=\tilde{\mathcal R}=I_m$, $\tilde{\mathcal Q}=C^\top C$, and $\Qf$ be given as in \eqref{eq:Wu_weight_rel}.
    Furthermore, let $(\widehat{A}_r,\widehat{B}_r,\widehat{C}_r)$ be the corresponding reduced-order model obtained from Algorithm \ref{alg:Wu}.
    Besides, consider a state space transformation of the form $\widehat{A}_r=\widehat{T}A_r \widehat{T}^{-1}$, $\widehat{B}_r=\widehat{T}B_r$, and $\widehat{C}_r=C_r \widehat{T}^{-1}$, where $\widehat{T}=\Upsilon_1^{\frac{1}{4}}$, that produces the equivalent port-Hamiltonian system with $\widehat{J}_r=\widehat{T}J_r\widehat{T}$, $\widehat{R}_r=\widehat{T}R_r\widehat{T}$, and $\widehat{Q}_r=\widehat{T}^{-1}Q_r\widehat{T}^{-1}=\Upsilon_1^{-\frac{1}{2}}$. 
    Then it holds that 
    \begin{align*}
        \widehat{A}_r \Upsilon_1^{\frac{1}{2}} + \Upsilon_1^{\frac{1}{2}} \widehat{A}_r^\top-\Upsilon_1^{\frac{1}{2}}\widehat{C}_r^\top \widehat{C}_r \Upsilon_1^{\frac{1}{2}} + \widehat{B}_r \widehat{B}_r^\top + 2\widehat{R}_r &= 0, \\
        \widehat{A}_r^\top \Upsilon_1^{\frac{1}{2}} + \Upsilon_1^{\frac{1}{2}} \widehat{A}_r-\Upsilon_1^{\frac{1}{2}}\widehat{B}_r\widehat{B}_r^\top \Upsilon_1^{\frac{1}{2}}+\widehat{C}_r^\top \widehat{C}_r=0.
    \end{align*}
\end{lemma}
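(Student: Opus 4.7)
The plan is to recognize that the two claimed identities are precisely the reduced-order filter and control Riccati equations \eqref{eq:lqg_fare_ph} and \eqref{eq:lqg_care_ph} written for the transformed pH reduced system $(\widehat A_r,\widehat B_r,\widehat C_r)$ with Hamiltonian matrix $\widehat Q_r=\Upsilon_1^{-1/2}$, so that $\Upsilon_1^{1/2}=\widehat Q_r^{-1}$. Accordingly, I would prove each identity separately: the first by direct appeal to the pH structure of the reduced-order model, the second by combining the balanced control Riccati equation of the full-order model with a similarity-transformation argument.

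For the first identity, I would note that Algorithm \ref{alg:Wu} combined with Lemma \ref{lem:eff_const_is_trunc} produces a pH reduced system in which $\widehat A_r=(\widehat J_r-\widehat R_r)\widehat Q_r$ and $\widehat C_r=\widehat B_r^\top \widehat Q_r$. Substituting $\Upsilon_1^{1/2}=\widehat Q_r^{-1}$ into the left-hand side and telescoping the factors $\widehat Q_r \widehat Q_r^{-1}=I_r$ yields $(\widehat J_r-\widehat R_r)+(\widehat J_r^\top-\widehat R_r)-\widehat B_r\widehat B_r^\top+\widehat B_r\widehat B_r^\top+2\widehat R_r=0$, which is exactly the computation showing $\Phatf=Q^{-1}$ in the proof of Theorem \ref{thm:cont_is_pH}, transplanted to the reduced system. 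Hence the first identity follows structurally.

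For the second identity, I would first establish that $\Upsilon_1$ solves the control Riccati equation for the untransformed reduced model $(A_r,B_r,C_r)$ and then push this solution through $\widehat T=\Upsilon_1^{1/4}$. With the choices $\tilde{\mathcal R}=I_m$ and $\tilde{\mathcal Q}=C^\top C$, Theorem~\ref{thm:Wu} says that the balanced $\Pc$ coincides with $\Upsilon$, so $\Ab^\top\Upsilon+\Upsilon\Ab-\Upsilon\Bb\Bb^\top\Upsilon+\Cb^\top\Cb=0$; block-diagonality of $\Upsilon=\mathrm{diag}(\Upsilon_1,\Upsilon_2)$ lets me read off the $(1,1)$-block as
\begin{equation*}
  A_r^\top \Upsilon_1 + \Upsilon_1 A_r - \Upsilon_1 B_r B_r^\top \Upsilon_1 + C_r^\top C_r = 0.
\end{equation*}
Finally, I would invoke the standard covariance of the control Riccati equation under similarity: if $X$ solves it for $(A_r,B_r,C_r)$, then $\widehat T^{-\top} X \widehat T^{-1}$ solves it for $(\widehat T A_r\widehat T^{-1},\widehat T B_r,C_r\widehat T^{-1})$. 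With $\widehat T=\Upsilon_1^{1/4}=\widehat T^\top$ and $X=\Upsilon_1$, this produces $\Upsilon_1^{-1/4}\Upsilon_1\Upsilon_1^{-1/4}=\Upsilon_1^{1/2}$, which is the claimed second identity. The only mild obstacle is correctly extracting the $(1,1)$-block of the balanced Riccati equation and verifying that the quadratic term $\Upsilon B B^\top \Upsilon$ also has the expected $(1,1)$-block $\Upsilon_1 B_r B_r^\top \Upsilon_1$—both are routine consequences of $\Upsilon$ being block-diagonal.
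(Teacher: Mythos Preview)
Your proof is correct and follows essentially the same route as the paper: both arguments reduce the first identity to the pH structure $A_r=J_r-R_r$, $B_r=C_r^\top$ (equivalently $\widehat C_r=\widehat B_r^\top\widehat Q_r$), and both reduce the second identity to the fact that $\Upsilon_1$ solves the control Riccati equation for the truncated triple $(A_r,B_r,C_r)$. The only presentational difference is that the paper carries out both computations by explicitly substituting $\widehat A_r=\widehat T A_r\widehat T^{-1}$ etc.\ and simplifying, whereas you invoke the covariance of the control Riccati equation under similarity for the second identity; your version is slightly more conceptual and also makes explicit (via the $(1,1)$-block extraction) why $\Upsilon_1$ is indeed a solution of the reduced Riccati equation, a point the paper simply asserts.
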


\begin{proof}
Recalling that $A_r=J_r-R_r$ and $B_r=\BbUpper=Q_r^\top \BbUpper=C_r^\top$, we obtain
\begin{align*}
 &\widehat{A}_r \Upsilon_1^{\frac{1}{2}} + \Upsilon_1^{\frac{1}{2}} \widehat{A}_r^\top-\Upsilon_1^{\frac{1}{2}}\widehat{C}_r^\top \widehat{C}_r \Upsilon_1^{\frac{1}{2}} + \widehat{B}_r \widehat{B}_r^\top \\
 &=\widehat{T} (J_r-R_r)\widehat{T}^{-1}\Upsilon_1^{\frac{1}{2}} + \Upsilon_1^{\frac{1}{2}}\widehat{T}^{-1} (J_r^\top - R_r^\top)\widehat{T}-\Upsilon_1^{\frac{1}{2}}\widehat{T}^{-1}B_r B_r^\top \widehat{T}^{-1} \Upsilon_1^{\frac{1}{2}}+\widehat{T}B_r B_r^\top \widehat{T} \\
 &=\widehat{T} \left((J_r-R_r)+(J_r-R_r)^\top-B_rB_r^\top+B_r B_r^\top \right) \widehat{T} \\
 &= -2 \widehat{T} R_r\widehat{T}=-2\widehat{R}_r.
\end{align*}
Similarly, we obtain 
\begin{align*}
  &\widehat{A}_r^\top \Upsilon_1^{\frac{1}{2}} + \Upsilon_1^{\frac{1}{2}} \widehat{A}_r-\Upsilon_1^{\frac{1}{2}}\widehat{B}_r\widehat{B}_r^\top \Upsilon_1^{\frac{1}{2}}+\widehat{C}_r^\top \widehat{C}_r \\
  %%
%   &\widehat{\Sigma}^{-\frac{1}{4}} \left(((\widehat{\Sigma}^{\frac{1}{4}}(J_r-R_r)\widehat{\Sigma}^{\frac{1}{4}})\widehat{\Sigma}^{-\frac{1}{2}})^\top \widehat{\Sigma}^{\frac{3}{4}}+  -\widehat{\Sigma} \BbUpper \BbUpper^\top \widehat{\Sigma} + \BbUpper\BbUpper^\top \right) \widehat{\Sigma}^{-\frac{1}{4}}
&=(\widehat{T}(J_r-R_r)\widehat{T}^{-1})^\top \Upsilon_1^{\frac{1}{2}} + \Upsilon_1^{\frac{1}{2}} (\widehat{T}(J_r-R_r)\widehat{T}^{-1}) - \Upsilon_1^{\frac{3}{4}} B_rB_r^\top \Upsilon_1^{\frac{3}{4}}+ \widehat{T}^{-1}B_rB_r^\top \widehat{T}^{-1} \\
&= \widehat{T}^{-1} \left((J_r-R_r)^\top \Upsilon_1 + \Upsilon_1  (J_r-R_r) - \Upsilon_1 B_r B_r^\top \Upsilon_1 + C_r^\top C_r \right) \widehat{T}^{-1} =0,
\end{align*}
where in the last step we have used that $A_r=J_r-R_r, B_r=C_r^\top $ and the fact that $\Upsilon_1$ solves the control Riccati equation for $(A_r,B_r,C_r)$. 
\end{proof}

\bibliographystyle{siam}
\bibliography{references}

\end{document}